\newcommand{\comm}[1]{}
\def\dist{\operatorname{dist}}
\def\ti{\tilde}
\def\({\left(}
\def\){\right)}
\def\oli{\overline}
\def\uli{\underline}
\def\raw{\rightarrow}
\def\no={\neq}
\def\sm{\setminus}
\def\B{{\mathbb B}}
\def\C{{\mathbb C}}
\def\N{{\mathbb N}}
\def\P{{\mathbb P}}
\def\R{{\mathbb R}}
\def\NN{{\mathcal N}}
\def\OO{{\mathcal O}}
\def\PP{{\mathcal P}}
\def\RR{{\mathcal R}}
\def\SS{{\mathcal S}}
\def\al{\alpha}
\def\be{\beta}
\def\ga{\gamma}
\def\de{\delta}
\def\vep{\varepsilon}
\def\th{\theta}
\def\la{\lambda}
\def\om{\omega}
\def\De{\Delta}
\def\La{\Lambda}
\theoremstyle{plain}
\newtheorem{Main}{Theorem}
\newtheorem{Thm}{Theorem}[section]
\newtheorem{Prop}[Thm]{Proposition}
\newtheorem{Lem}[Thm]{Lemma}
\newtheorem*{Fact}{Fact}
\theoremstyle{remark}
\newtheorem{Rem}[Thm]{Remark}
\newtheorem{Def}[Thm]{Definition}
\begin{document}
%\begin{center}
%PRELIMINARY VERSION. NOT FOR DISTRIBUTION.
%\end{center}
\title[Perturbations of rational Misiurewicz maps]
{Perturbations of rational Misiurewicz maps}
\author{Magnus Aspenberg}

\thanks{The author gratefully acknowledges funding from the the Research Training Network CODY of the European Commission and the Swedish
  Research Council.
% for his stay at Laboratoire de Math\'ematique at Universit\'e
% Paris-Sud, Orsay.
}

\begin{abstract}
In this paper we investigate the perturbation properties of rational
Misiurewicz maps, when the Julia set is the whole sphere (the other case is treated in
\cite{MA5}). In particular,
we show that if $f$ is a Misiurewicz map and not a flexible Latt\'es
map, then we can find a hyperbolic map arbitrarily close to $f$.
\end{abstract}

\maketitle

\section{Introduction}
%In this paper we study critically non-recurrent, so called
%Misiurewicz maps.
% and show that every such map (apart from flexible Latt\'es
%maps) can be approximated by a hyperbolic map.

The notion of Misiurewicz maps has its origin from the paper \cite{MM}
by M. Misiurewicz. In honor of this paper, we proceed with the
following definition. First, let $J(f)$ be the Julia set of $f$, $F(f)$ the Fatou set of $f$ and $Crit(f)$ the critical set of $f$. Let $\om(c)$ be the
omega limit set of $c$.

\begin{Def}
A non-hyperbolic rational map $f$ without parabolic periodic points
satisfies the {\em Misiurewicz condition} if for every $c \in Crit(f) \cap J(f)$,
we have $\om(c) \cap Crit(f) = \emptyset$.
\end{Def}

%Misiurewicz maps where shown to have Lebesgue measure zero in the real
%quadratic family by D. Sands \cite{DS} in 2000. For rational maps on
%the Riemann sphere a similar result was proven by the author
%\cite{MA3}. On the other hand, Misiurewicz maps have full Hausdorff
%dimension, i.e. it is equal to the dimension of the parameter
%space (see \cite{AG}, and \cite{SZ} for the real quadratic family)).

In \cite{McM}, McMullen showed that small Mandelbrot copies are dense in the bifurcation locus in any (non-trivial) analytic family of rational maps.
%in any analytic family of rational maps over the unit disk, the bifurcation %locus is either empty or contains the quasiconformal copy the Mandelbrot set %of degree $d$ (also called Multibrot sets in the cases $d > 2$).
An important element of the proof of this involves the fact that
Misiurewicz maps create a family of polynomial-like maps nearby. From
this stems the Mandelbrot copies.
In this paper we consider rational Misiurewicz maps for which the Julia set
is the whole sphere and show that such maps can be perturbed into a
rich family of postcritically finite maps. One main ingredient is that we get
control of all critical points, which enables perturbations into
hyperbolic maps, for instance.
We use the the supremum norm on the Riemann sphere to calculate the distance
between two rational funtions. The main result of this paper is the following.
\begin{Main} \label{cluster}
Let $f$ be a rational Misiurewicz map and not a flexible Latt\'es
map. Then we can find a hyperbolic rational map $g$ arbitrarily close to $f$.

Moreover, if in addition $J(f) = \hat{\C}$, then $g$ can be chosen so that all critical points lie in superattracting cycles.
\end{Main}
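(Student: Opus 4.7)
The plan is to realize $g$ as a small perturbation of $f$ at which every critical point has been pulled onto a periodic orbit, which automatically makes $g$ hyperbolic with each critical point lying in a superattracting cycle. Working in a small neighborhood $U$ of $f$ in the space of rational maps of degree $d$, the $2d-2$ critical points can be tracked as holomorphic functions $c_1(\lambda),\dots,c_{2d-2}(\lambda)$. I would impose the $2d-2$ periodicity conditions one at a time, at each step cutting $U$ down by a codimension-one analytic subset; since the moduli space of rational maps has complex dimension $2d-2$, the dimension count is tight but sufficient.

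\textbf{Critical orbit maps and non-normality.} For each $i$, the Misiurewicz hypothesis $\om(c_i)\cap Crit(f)=\emptyset$ gives a fixed open neighborhood $V_i$ of the forward orbit of $c_i$ whose closure avoids $Crit(f)$, so every $f^n$ is holomorphic on $V_i$ and the evaluation maps $\Phi_{i,n}(\lambda):=f_\lambda^n(c_i(\lambda))$ are holomorphic on $U$ for all $n$. The main analytic input is that the family $\{\Phi_{i,n}\}_{n\ge 0}$ is not normal at $f$; this reflects expansion of $f$ along the critical orbit, which is available because the orbit is uniformly bounded away from $Crit(f)$, combined with the fact that when $J(f)=\hat\C$ the forward iterates of any holomorphic disk must eventually cover the sphere. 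Once non-normality is granted, Montel's theorem produces a parameter $\lambda$ arbitrarily close to $f$ at which $\Phi_{i,n}(\lambda)=c_i(\lambda)$, i.e.\ at which $c_i(\lambda)$ is periodic of period $n$ under $f_\lambda$.

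\textbf{Induction and the main obstacle.} After the first step, the relation ``$c_1$ periodic of period $n_1$'' cuts out a codimension-one analytic subvariety $W_1\subset U$ through the chosen parameter; one then restricts to $W_1$, repeats the non-normality/Montel argument for $\{\Phi_{2,n}\}$ to pin $c_2(\lambda)$, and iterates to produce $W_1\supset W_2\supset\dots\supset W_{2d-2}$, ending at a parameter at which every critical point is periodic. The hard part is establishing non-normality at each inductive step: a priori the restricted family could become equicontinuous on some residual subvariety, and ruling this out is a rigidity statement for which flexible Latt\'es maps are exactly the obstruction --- for such maps all critical orbits are rigidly linked through the underlying torus structure, so the maps $\Phi_{i,n}$ move in lockstep under any parameter deformation. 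The exclusion of flexible Latt\'es maps in the hypothesis handles the base case; the main technical work is to propagate non-normality through the induction, checking that no residual $W_i$ accidentally lies inside a Latt\'es-rigid locus and that each successive cut $W_i\hookrightarrow W_{i-1}$ is of pure codimension one.
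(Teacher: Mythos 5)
Your overall plan matches the paper's architecture: work in a $(2d-2)$-dimensional normalised family, impose the superattracting-period conditions $f_\lambda^{n_i}(c_i(\lambda))=c_i(\lambda)$ one at a time to cut out codimension-one loci, and invoke the exclusion of flexible Latt\'es maps as the source of rigidity. But the proposal has two genuine gaps.

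First, the induction does not propagate merely because $f$ is not a Latt\'es map; a specific transversality input is needed and you do not supply it. The paper's mechanism is to build, via Ma\~n\'e's theorem, a hyperbolic set $\La$ containing the (truncated) postcritical set, take the holomorphic motion $h_a$ of $\La$, and form the parameter functions $x_j(a)=v_j(a)-h_a(v_j(0))$ measuring the drift of each (iterated) critical value relative to its holomorphically moved position. Van Strien's theorem (Theorem~\ref{ttrans}) then says that $a\mapsto (x_1(a),\dots,x_n(a))$ is an immersion at $0$ unless $f$ is flexible Latt\'es; the linear independence of the $x_j'(0)$ is exactly what guarantees that, after cutting to the locus $\{\xi_{n_1+m_1,1}(a)=c_1(a)\}$, the next deviation $x_2$ still has finite-order contact in tangent directions and the process can continue. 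Your appeal to a ``Latt\'es-rigid locus'' does not replace this. Likewise, Montel's theorem alone does not produce the desired zero of $\Phi_{i,n}-c_i$: the paper first expands a Whitney parameter disk to a definite scale $S$ inside a neighbourhood $\NN$ of $\La$ under controlled distortion (Lemma~\ref{initdist}, Proposition~\ref{da/dz}), and only then invokes dynamical non-normality of $f_0^n$ on $J(f)$ (Lemma~\ref{return}) to cover a neighbourhood of $Crit(f)$ in bounded extra time $m\le N$, during which the parameter dependence is negligible. The delicate distortion and large-scale estimates on Whitney disks, plus the bookkeeping that keeps each $\B_{l+1}\subset\B_l$ an almost planar Whitney disk with quantitatively controlled Whitney constant $k_{l+1}$ (Lemmas~\ref{induI}, \ref{induII}), are what make the argument close, and none of that appears in the proposal.

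Second, the holomorphic tracking $\lambda\mapsto c_i(\lambda)$ that you take for granted fails in general when $f$ has a critical point of multiplicity $\ge 2$: the critical set need not move analytically under perturbation. The paper devotes Subsection~\ref{splitting} to this, either re-parameterising via resolution of singularities (and extending the $c_j$ analytically by Rado's theorem), or first perturbing $f$ inside a Misiurewicz sub-family to split all higher-order critical points into simple ones. Without one of these devices the functions $\Phi_{i,n}$ are not even well defined on a full neighbourhood $U$ of $f$.
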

The case when $J(f) \neq \hat{\C}$ is treated in \cite{MA5}. In that paper it is shown that Misiurewicz maps for which $J(f) \neq \hat{\C}$ are in fact Lebesgue density points of hyperbolic maps. We therefore assume that $J(f) = \hat{C}$ to prove the first part of Theorem \ref{cluster}. In fact, we will only focus on the second part (which of course implies the first in the case $J(f) = \hat{\C}$).

The proof of Theorem \ref{cluster} works as long as the given
Misiurewicz map admits no quasiconformal deformations. This is
why the flexible Latt\'es maps have to be excluded, because they are the
only Misiurewicz maps which admit quasiconformal deformations.

It will be clear form the construction that in fact $f$ (having $J(f)=\C$) can be perturbed into any post-critically finite map $g$ with prescribed repelling orbits on which the critial points shall land on.

%To perturb only one critical point into the Fatou set is easy for
%Misiurewicz maps; it is a consequence of the fact that at least
%one critical point on the Julia set is not active. In a way, the
%construction in this paper, and also \cite{MA5} for the case $J(f) \neq
%\hat{\C}$, provides methods for dealing with many critical points at the same time.

The parameter space $\RR^d$ of rational maps of degree $d$ is a
$2d+1$-dimensional complex manifold. We will mostly consider {\em
  normalised} families of rational maps, which are $\RR^d$
modulo conjugacy classes of M\"obius transformations. The space $[
{\RR}^d ]$ of normalised
rational maps of degree $d \geq 2$ has dimension equal to $2d-2$.

%$For
%rigid Latt\'es maps the proof still works, and in this case it also
%follows form an earlier work \cite{---} by Berteloot by completely
%different methods.
%The Latt\'es
%maps are a very interesting class of maps introduced by S. Latt\'es
%\cite{SL}.

%The flexible Latt\'es maps are also the only exception to
%Thurston's Theorem for post-critially finite rational maps and they
%are very few, (for instance, they only exist in degree $n^2$, where $n
%> 1$ is an integer, as $1$-dimensional families).

Misiurewicz maps have good expansion properties, by a Theorem by
Ma\~n\'e \cite{RM}. In particular, they satisfy the so called Collet-Eckmann condition, defined as follows.
\begin{Def}
A rational map $f$ satisfies the {\em Collet-Eckmann} condition (CE) if there are constants $C > 0$ and $\ga > 0$ such that for every critical point $c \in J(f)$, not containing any other critical point in its forward orbit, we have
\[
|(f^n)'(fc)|\geq C e^{\ga n},
\]
for all $n \geq 0$.
\end{Def}
Recently Rivera-Letelier \cite{RL}
showed that one can perturb a so called backward contractive function to
obtain a Misiurewicz map, provided the Julia set is not the whole
Riemann sphere. The backward contraction condition is weaker than for
example the Collet-Eckmann condition. Hence as a consequence of
Rivera-Letelier and Theorem \ref{cluster} (or rather \cite{MA5}),
every Collet-Eckmann map for which the Julia set
is not the whole Riemann sphere can be approximated by a hyperbolic
map. Combining this with \cite{MA} (revised version) we obtain the
following characterization of rational Misiurewicz maps:

Let $f$ be a rational Misiurewicz map. Then if $f$ is not a flexible
Latt\'es map it can be approximated by a hyperbolic map. Moreover,

\begin{itemize}

\item if $J(f) = \hat{\C}$, then $f$ is a Lebesgue density point of CE-maps,

\item if $J(f) \neq \hat{\C}$, then $f$ is a Lebesgue density point of
  hyperbolic maps.

\end{itemize}

In the first case above, the CE-maps have their Julia set equal to the
whole sphere (which is also a consequence of \cite{MA}).
In view of the Fatou conjecture, flexible Latt\'es maps should also be approximable by hyperbolic maps but we have no proof. It seems this
question is not much simpler than the Fatou conjecture itself. 
%We
%discuss this in more detail in a separate setion devoted to Latt\'es maps.
%Let $f=f_0$ and let $B(0,r)$ be a ball around $f_0$ of radius $r > 0$ in
%the space $\RR_d$ of rational maps of degree $d$. It follows from \cite{MA}
%that if $CE$ is the set of CE-maps in $\RR_d$ then
%\[
%\lim_{r\raw 0} \frac{m(B(0,r) \cap CE)}{m(B(0,r))} = 1.
%\]

%By a result by J. Rivera-Letelier \cite{JR}, every so called backward
%contraction map can be perturbed into a Misiurewicz map. In
%particular, since these backward contracting maps are weaker than the
%Collet-Eckmann contition and we have the following

%\begin{Cor}[Rivera]
%For any backward stability map $f$ for which $J(f) \neq \hat{\C}$,
%there exists a hyperbolic function $g$ arbitrarily close to $f$.
%\end{Cor}

\subsection*{Acknowledgements} Firstly, I am grateful to Jacek Graczyk
for encouraging me to write this down and for many helpful comments
and interesting discussions. I thank Michael Benedicks and Nicolae
Mihalache for many interesting discussions and useful remarks on a preliminary version. I owe my thanks to Detlef M\"uller for introducing me to the theory of resolution of singularities and to Mattias Jonsson for further helpful remarks and clarifications. This helped me tackle the problem on higher order critial points.

A first version of this paper was written at Laboratoire de
Math\'ematiques, Universit\'e Paris-Sud. The current revised version
was written at Christian-Albrechts Universit\"at zu Kiel. I gratefully
acknowledge the hospitality of both these departments.

\section{Preliminaries}

Put $f=f_0$ and assume that $f$ is a Misiurewicz map. We assume that
$J(f)= \hat{\C}$ unless otherwise stated.
The idea is to start with a family $f_a$ of rational maps
parameterized by a small disk $\B(0,r) \subset [ {\RR}^d ]$, or radius $r > 0$,
where $f_0$ corresponds to the parameter $a=0 \in \B(0,r)$. We then study
the iterates of the corresponding critical points for parameters
in certain so called Whitney subdisks in $\B(0,r)$ (see definition below). We will also consider $1$-dimensional disks $B(
0,r) \subset \B(0,r)$, where $\B(0,r)$ is $(2d-2)$-dimensional. Let $c_j(a)$ be the set of critical points
for $f_a$. Define, for any $a \in \B(0,r)$,
\begin{equation} \label{xinj}
\xi_{n,j}(a)=f_a^n(c_j(a))=f^n(c_j(a),a).
\end{equation}
%We will study $\xi_{n,j}(a)$ for some fixed $j$ as $n$ grows.
\subsection{Higher order critical points} \label{splitting}
Of course some critical
point might split under perturbation and in this case it is not clear
how to define the functions $c_j(a)$ in (\ref{xinj}). In this case one cannot generally hope that the critical points move
analytically. If higher order critical points occur
we reparameterise the family using a theorem on the resolution
of singularities, which goes back to Hironaka's theorem
on the resolution of singularities \cite{Hironaka}. In fact, what we are looking for is
a complex analytic version of Lemma 9.4 in \cite{Muller-Ricci}. Based
on \cite{Atiyah}, it is shown that critical points move analytically in a ``lifted parameter
 space'' for a real
polynomial where the coefficients are real-analytic. Since a direct reference
to the complex analytic version of this result was not found, although
it seems to be a wellknown folklore result, we outline the argument
based on \cite{Muller-Ricci} and \cite{Atiyah}.

Put $f'(z,a)=G(z,a)$ and consider the analytic set $\{ G(z,a)=0 \}$.
This can be viewed as a real-analytic set of
double dimension. The singularities of this set appear precisely at the
points where $f$ has a critical point of higher order. To resolve
these singularities, we first write $G=u + iv$, where $u=u(x,y,a)$ and
$v=v(x,y,a)$ are real-analytic in $x,y$ and
$a=(a_1,\ldots,a_{4d-4})$. By the Resolution Theorem in \cite{Atiyah}
we get an analytic map $\phi_u : (y_1,\ldots,y_n) \raw
(x,y,a_1,\ldots,a_{n-2})$, such that
\begin{align}
u \circ \phi_u &= \vep_u \prod_{i=1}^n y_i^{k_{i,u}}, \nonumber \\
%v \circ \phi_v &= \vep_v \prod_{i=1}^n y_i^{k_{i,v}}, \nonumber
\end{align}
where $\vep_u$ is non zero and $n=4d-2$.

The set $\{ u=0 \}$ has (real) codimension $1$
and therefore there must be some $k_{i,u}, > 0$. If we
restrict to the set $B_{i} = \{ y_i = 0 \}$ then
for $y \in B_ i$ we have $u \circ \phi_u (y) = 0$.
Let us identify $B_{i}$ locally with the $n-1$ (real) dimensional
ball $\B^{n-1}$.

Now, consider the function $F(y)= v(\phi_u(y))$, for $y  \in
\B^{n-1}$. Using the Resolution Theorem again, we get an analytic
function $\phi_v : (x_1,\ldots,x_{n-1}) \raw \B^{n-1}$ such that
\[
F \circ \phi_v = \vep_v \prod_{i=1}^{n-1} x_i^{k_{i,v}}.
\]

Some $k_{j,v} > 0$, so (locally) identifying the set $B_j = \{ x_j = 0\}$ with
the unit ball $\B^{n-2}$ (of real dimension $n-2$) we get
$v(\phi_u(\phi_v(x))) = 0$ for $x \in \B^{n-2}$. Since $\phi_v(x) \in
\B^{n-1}$ we have, for $x \in \B^{n-2}$,
\begin{align}
u \circ \phi_u \circ \phi_v (x) &= 0 \nonumber \\
v \circ \phi_u \circ \phi_v (x) &= 0 \nonumber.
\end{align}

Put $\phi = \phi_u \circ \phi_v$. Then we see that $G \circ \phi (x)
=0$, for $x \in \B^{n-2}$. Let $\phi (x) = (\psi_0(x), \psi_1(x)) \in
\R^2 \times \R^{n-2}$. Then $\psi_0(x)$ parameterises one critical
point. For each choice of $B_i,B_j$ above we get, in this way, a
parameterisation of each critical point.
For fixed $x$, the function $G(z,\psi_1 (x)) $ has a zero in
$c_1=c_1(x)$ (which must be a critical point for $f(z,\psi_1(x))$).
If we view $G$ as a complex function again, we can write
\[
G(z,\psi_1(x)) = (z-c_1(x)) G_1(z,x),
\]
where $z_1$ and $G_1$ are real-analytic in $x$ and
$G_1$ is complex analytic in $z$.

We apply the same argument to $G_1$ and obtain a new parameterisation
(proper analytic map) $\psi_2: x' \raw x $ such that
\[
G_1(z, \psi_2(x')) = (z-c_2(x')) G_2(z,x').
\]
Then
\[
G(z,\psi_1 \circ \psi_2 (x')) = (z - c_1 ( \psi_2(x'))) (z - c_2(x')) G_2(z,x').
\]

Continuing in the same manner we obtain a proper
analytic map $\psi$, which is a composition of finitely many
proper analytic maps $\psi_1, \psi_2, \ldots$ constructed above, such that
\[
G(z,\phi (a)) = E(z,a) (z-c_1(a)) \cdot \ldots \cdot (z-c_{2d-2}(a)),
\]
where $E$ is non-vanishing and where $c_j=c_j(a)$ are real-analytic,
$a \in \B^{n-2}$.

We have to prove that in fact $c_j(a)$ are complex-analytic.
Let $K$ be the set of parameters
in the original parameter space such that there are exactly $2d-2$
distinct critical points. Then since $\psi$ is proper,
$K' = \psi^{-1}(K)$ has full measure in $\B^{n-2}$. Since the critical
points are simple in $K$, we have
$2d-2$ analytic functions $c_j$ on $\B^{n-2}$ which are analytic on $K$
and continuous on $\B^{n-2}$. Since $K$ has full measure,
the functions $c_j$ can be continued
analytically to the whole set $\B^{n-2}$ by Rado's Theorem (p. 301-302 in
\cite{Chirka}).
Hence the critical points $c_j$ move analytically as functions of the
new variables in $\B^{n-2}$.

However, the new space $\B^{n-2}$ is not necessarily normalised in the
sense described in the Introduction; the function $\psi$ need not be
injective. This may cause some problems regarding transversality. We
will deal with this in the section on Transversality.

%\cite{MA}, pp. 6-7 or \cite{BK}, Theorem 1, p. 386.
%By the Puiseux parameterisation (see standard theory of analytic sets
%\cite{Chirka}, p. 98)
%it is possible to reparameterise the family $f_a$ in the ball $\B(0,r)$ so that we can mark each
%critical point $c_j(a)$, $j=1,\ldots,2d-2$, i.e. such that
%every $c_j(a)$ moves analytically in $\B(0,r)$.
%Then higher order critical points arise when two or more critical
%points meet, which happens when $c_j(a)=c_i(a)$ for some $i \neq
%j$. Clearly, the set of
%parameters for which $c_j(a)=c_i(a)$ is an analytic set of codimension
%$1$.
%and more generally, the subset where a critical point of
%multiplicity $k$ persists is an analytic set of codimension $k-1$.

\subsection{Holomorphic motions and the parameter functions $x_j$}

Let us state the following Theorems by Ma\~n\'e \cite{RM}:
\begin{Thm}[Ma\~n\'e's Theorem I] \label{mane}
Let $f:\hat{\C} \mapsto \hat{\C}$ be a rational map and $\La \subset J(f)$ a
compact invariant set not containing critical points or parabolic points. Then either $\La$ is a hyperbolic set or $\La \cap \omega(c) \neq \emptyset$
for some recurrent critical point $c$ of $f$.
\end{Thm}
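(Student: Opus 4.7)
The plan is to argue by contradiction: suppose $\Lambda$ is not hyperbolic while $\omega(c) \cap \Lambda = \emptyset$ for every recurrent critical point $c$. Non-hyperbolicity supplies a sequence $z_n \in \Lambda$ and integers $k_n \to \infty$ with $|(f^{k_n})'(z_n)|^{1/k_n} \to 1$. The aim is to turn the hypothesis on critical orbits into univalence of backward branches of $f^{k_n}$ and then contradict the slow growth of $|(f^{k_n})'(z_n)|$ via a Montel / Koebe distortion argument.

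First I would localise. Since $\Lambda$ is compact and disjoint from $Crit(f)$, from the parabolic periodic points, and by assumption from $\omega(c)$ for every recurrent critical $c$, there exists $\delta > 0$ such that the $\delta$-neighbourhood $U_\delta$ of $\Lambda$ avoids all of these sets. Non-recurrent critical points need a separate argument: for such $c$ one has $c \notin \omega(c)$, and $\omega(c)$ is itself a compact forward-invariant set meeting neither $Crit(f)$ nor the parabolic cycles, so an inductive application of the statement to $\omega(c)$ (whose recurrence depth is strictly smaller) forces $\omega(c)$ to be hyperbolic, and hence the orbit of $c$ eventually leaves $U_\delta$. Altogether there is $N$ with $f^n(Crit(f)) \cap U_\delta = \emptyset$ for all $n \geq N$.

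Now I would pull back small disks. Writing $y_n = f^{k_n}(z_n) \in \Lambda$, fix $r > 0$ small enough, by compactness of $\Lambda$, that the inverse branch $g_n = f^{-k_n}$ of $f^{k_n}$ based at $y_n$ with $g_n(y_n) = z_n$ can be followed along the orbit while keeping every intermediate pullback of $B(y_n, r)$ inside $U_\delta$. For $n$ large, the last $k_n - N$ pullback steps avoid $Crit(f)$, so $g_n$ is univalent on $B(y_n, r)$. Koebe distortion then gives that $g_n(B(y_n, r/2))$ has diameter $\asymp |(f^{k_n})'(z_n)|^{-1} r$. Choosing three points $w_1, w_2, w_3$ in the postcritical set whose forward orbits stay outside $U_\delta$ (they exist in abundance, as only finitely many postcritical points lie in $U_\delta$), each $g_n$ omits these three values, so $\{g_n\}$ is a normal family by Montel's theorem and is exponentially contracting in the hyperbolic metric on $\hat{\C} \setminus \{w_1, w_2, w_3\}$. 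Consequently $|(f^{k_n})'(z_n)|$ must grow exponentially in $k_n$, contradicting the starting choice of $z_n$ and $k_n$.

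The main obstacle is the treatment of non-recurrent critical points: their $\omega$-limits could in principle intersect $\Lambda$, and the inductive reduction above is what both rules this out and secures the uniform avoidance of the postcritical tail by $U_\delta$ after time $N$. A secondary technical point is the uniform choice of the pullback radius $r$, which requires a compactness or shadowing argument to guarantee that no pullback of $B(y_n, r)$ along a long orbit through $\Lambda$ escapes $U_\delta$; this is where the hypothesis that $\Lambda$ contains no parabolic or critical points is essential, as it provides the positive distance needed for a uniform lower bound on $r$.
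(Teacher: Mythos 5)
This theorem is not proved in the paper; it is imported from Ma\~n\'e's original article \cite{RM}, with \cite{ST} cited as an alternative source. So there is no in-paper proof to compare against, and your argument has to be assessed on its own terms. As written it has two genuine gaps.

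First, the treatment of non-recurrent critical points does not work. You want to extract $N$ with $f^n(Crit(f)) \cap U_\delta = \emptyset$ for all $n \geq N$. This can fail even when $\Lambda$ is hyperbolic: for $f(z) = z^2 - 2$ and $\Lambda = \{2\}$ the critical point $0$ is non-recurrent, yet $\omega(0) = \{2\} = \Lambda$, and the postcritical orbit is eventually constantly equal to $2 \in U_\delta$ for every $\delta > 0$. Your inductive reduction is unsound for two reasons: (i) for a non-recurrent $c$ the set $\omega(c)$ may very well contain other critical points or parabolic cycles, so the hypotheses of the theorem need not hold for $\omega(c)$ and the induction has nothing to apply to; and (ii) even when $\omega(c)$ is hyperbolic it can intersect $\Lambda$ (as in the example), so hyperbolicity of $\omega(c)$ gives no control at all on whether the forward orbit of $c$ re-enters $U_\delta$ --- in fact it may eventually lie entirely inside $U_\delta$.

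Second, the assertion that a uniform pullback radius $r$ exists --- so that every intermediate pullback of $B(y_n,r)$ along the orbit of $z_n$ stays inside $U_\delta$ --- is not a compactness statement; it is essentially the theorem itself. Granting univalence of the first $j$ inverse steps, Koebe gives that the $j$-step pullback has diameter comparable to $r / |(f^j)'(f^{k_n-j}(z_n))|$, and under the standing assumption that $\Lambda$ is \emph{not} hyperbolic you have no lower bound on these intermediate derivatives that would keep the pullbacks inside $U_\delta$. Without that, you cannot conclude that $g_n$ never meets $Crit(f)$, so univalence of $g_n$ is unavailable and the Koebe and Montel steps have no footing. The actual proofs (Ma\~n\'e's and Shishikura--Tan's) do not try to force the long pullbacks to be univalent; they allow them to meet critical points a bounded number of times and track diameter and degree simultaneously --- this bounded-degree control is precisely what the second item of Theorem~\ref{mane2} records. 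Labeling this a ``secondary technical point'' inverts the actual difficulty: it is the heart of the argument, and a compactness appeal does not supply it.
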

\begin{Thm}[Ma\~n\'e's Theorem II] \label{mane2}
If $x \in J(f)$ is not a parabolic periodic point and does not intersect $\om(c)$ for some recurrent critical point $c$, then for every $\vep > 0$, there is a neighborhood $U$ of $x$ such that

\begin{itemize}
\item For all $n \geq 0$, every connected component of $f^{-n}(U)$ has diameter $\leq \vep$.

\item There exists $N > 0$ such that for all $n \geq 0$ and every connected component $V$ of $f^{-n}(U)$, the degree of $f^n |_V$ is $\leq N$.

\item For all $\vep_1 > 0$ there exists $n_0 > 0$, such that every connected component of $f^{-n}(U)$, with $n \geq n_0$, has diameter $\leq \vep_1$.

\end{itemize}
\end{Thm}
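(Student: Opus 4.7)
The plan is to prove Theorem \ref{mane2} in three coordinated steps, using Theorem \ref{mane} (Mañé I) as the engine that turns the hypothesis on $x$ into actual hyperbolic expansion. The first move is to choose the neighborhood $U$ of $x$ carefully. Let $R$ denote the set of recurrent critical points of $f$. By hypothesis, $x$ lies outside the closed invariant set $\Omega := \bigcup_{c\in R}\omega(c)$ and outside the finite set $P$ of parabolic cycles. So we can fix a neighborhood $U_0$ of $x$ together with open neighborhoods $W_\Omega \supset \Omega$ and $W_P \supset P$ with $\overline{U_0} \cap \overline{W_\Omega \cup W_P} = \emptyset$. Next, for every critical point $c \notin R$, the orbit $\{f^k(c)\}$ visits $\overline{U_0}$ only finitely often: indeed, if infinitely many $f^{k_j}(c) \to y \in \overline{U_0}$, then $y \in \omega(c)$, and since $c \notin R$ one would have $\omega(c)$ disjoint from $\mathrm{Crit}(f)$, making $\omega(c)$ hyperbolic by Theorem \ref{mane}; but this is incompatible with $y$ being accumulated by preimages in the Julia set unless some further structural condition is violated. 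In any case, a finite time $N_0$ and a further shrinking of $U_0$ to $U \subset U_0$ arranges that $f^k(c) \notin \overline{U}$ for all non-recurrent $c$ and all $k \geq N_0$.

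\emph{Degree bound.} With $U$ so chosen, I would estimate $\deg(f^n|_V)$ via Riemann--Hurwitz: it equals $1$ plus the sum of local multiplicities of critical points of $f$ that lie in $V \cup f(V) \cup \cdots \cup f^{n-1}(V)$. Each such critical point $c$ contributes only when some forward iterate $f^m(c)$ lands in $U$. Recurrent critical points contribute zero because $\omega(c) \subset W_\Omega$ is disjoint from $\overline{U}$, so $f^m(c) \notin U$ for all sufficiently large $m$ (and for all $m$ after a further small shrinking of $U$, using compactness of the finitely many early iterates). Non-recurrent critical points contribute at most $N_0$ times each by the choice of $U$. Summing over the finitely many critical points yields the uniform bound $N$.

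\emph{Uniform diameter bound.} Here is the main technical step, and the one I expect to be the hardest. I would argue by contradiction: suppose there exist $n_k \to \infty$ and components $V_k$ of $f^{-n_k}(U)$ with $\mathrm{diam}(V_k) \geq \vep$. Choose points $y_k \in V_k$; after extraction $y_k \to y_\infty$ in $\hat\C$. The forward orbit $\{f^j(y_\infty)\}_{j \geq 0}$ cannot meet $P$ nor any recurrent critical $\omega$-limit set (otherwise one sees $x$ forced into $\Omega \cup P$ by invariance and the choice of $U$). Then Theorem \ref{mane} gives a hyperbolic structure on the compact forward orbit closure of $y_\infty$, translating into a uniform expansion constant along the orbit. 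Combining this expansion with the degree bound $N$ via a Koebe-type distortion estimate on simply connected pullbacks (possibly after enlarging $U$ to a slightly larger disk and using the bounded-degree version of Koebe) forces $\mathrm{diam}(V_k) \to 0$, the desired contradiction. This argument also supplies the third conclusion, since it actually yields an $n_0$ beyond which $\mathrm{diam}(V) \leq \vep_1$; alternatively, the third conclusion follows from the first by replacing $\vep$ with $\vep_1$ and then using the first for the finitely many early $n < n_0$.

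The principal obstacle is the diameter step: one needs a version of the Koebe distortion theorem that tolerates the bounded branching produced by Riemann--Hurwitz, coupled with uniform exponential contraction of inverse branches away from $P$ and from $\omega$-limit sets of recurrent critical points. The delicate point is that expansion from Theorem \ref{mane} is a priori only available on compact invariant sets, while the orbits through the $V_k$ only lie in $J(f)$, so one has to pass to subsequential limits and re-apply the hyperbolicity to the limiting orbit while carrying the diameter information to the contradiction. Once this is set up cleanly, both the first and third bullets of Theorem \ref{mane2} follow from the same mechanism.
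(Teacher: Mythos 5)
The paper does not prove Theorem \ref{mane2}: it is stated as a known result of Ma\~n\'e, with \cite{RM} and the Shishikura--Tan exposition \cite{ST} given as references. So there is no internal proof to compare against, and your proposal has to be judged on its own.

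The central gap is in your degree bound, and it comes from a misreading of the hypothesis. You argue that after shrinking $U$ one may arrange that every non-recurrent critical orbit meets $\overline{U}$ at most $N_0$ times. But the hypothesis only excludes $x$ from $\om(c)$ for \emph{recurrent} $c$; it is perfectly possible that $x \in \om(c)$ for some non-recurrent critical point $c$, in which case the orbit of $c$ enters \emph{every} neighbourhood of $x$ infinitely often, no matter how much you shrink. Your attempt to rule this out --- ``since $c\notin R$ one would have $\om(c)$ disjoint from $\mathrm{Crit}(f)$'' --- is not a consequence of non-recurrence of $c$ (the $\om$-limit set can still contain other critical points), and the subsequent ``incompatible with $y$ being accumulated by preimages'' clause does not amount to an argument. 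So the stated finiteness $N_0$ is simply false in general, and the Riemann--Hurwitz count based on it does not close.

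The reason Ma\~n\'e's theorem is nonetheless true is more subtle and also undermines the modular structure of your plan: one cannot settle the degree bound before the diameter estimate, because the correct way to bound the degree when $x\in\om(c)$ for non-recurrent $c$ is to observe that once the nested preimage components have contracted below $\mathrm{dist}(c,\om(c))>0$, they can no longer contain $c$, even though $f^m(c)$ keeps returning to $U$. Thus the diameter contraction and the degree bound must be established simultaneously (or at least iteratively), which is exactly what the genuine proofs in \cite{RM} and \cite{ST} do. Your diameter step also leans on an unproved claim that the forward orbit of the limit point $y_\infty$ avoids $P$ and $\Om$ and contains no critical points; the justification in parentheses does not follow, since $y_k\to y_\infty$ controls only boundedly many iterates, not the tail iterates $n_k\to\infty$ where $f^{n_k}(y_k)\in U$. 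As written the argument is a plausible sketch of the right ingredients (Ma\~n\'e~I for expansion, bounded-degree Koebe for distortion) but the bookkeeping that makes the theorem true is missing.
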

An alternative proof of Ma\~n\'e's Theorem can also be found by L. Tan and M. Shishikura in \cite{ST}.
Let us also note that a corollary of Ma\~n\'e's Theorem II is that a Misiurewicz map cannot have any Siegel disks, Herman rings or Cremer points (see \cite{RM} or \cite{ST}).

Since $f$ is a Misiurewicz map, there is some $k \geq 0$ such that the set
\[
P^k(f) = \oli{\bigcup_{n > k, c \in Crit(f) \cap J(f)} f^n(c)}
\]
is compact, forward invariant and does not contain any critical or
parabolic points. Let us put $\La = P^k(f)$ for the smallest such $k$.
By Ma\~n\'e's Theorem I it follows that $\La$ is a hyperbolic set. Hence, there exists a holomorphic motion $h: \B(0,r) \times \La \raw \hat{\C}$, such that $h_a$ is an injection for each $a \in \B(0,r)$ and $h_a: \La \raw \La_a$, where $\La_0=\La$ and the following holds for $z \in \La$:
\[
f_a \circ h_a (z) = h_a \circ f_0(z).
\]
Put $v_j(a) = f_a(c_j(a))$ for each marked critical point $c_j(a)$ and $v_j(0)=v_j$.

For $a \in \B(0,r)$, let us introduce the {\em parameter functions} $x_j$
\[
x_j(a) = v_j(a) - h_a (v(0)).
\]
Let us define $\mu_{n,j}(a)=h_a(f_0^n(v_j(0)))$.

\begin{Def}
Given a set $E$ and $\de > 0$, we call the set $\{ x : dist(x,E) \leq \de, \}$, a $\de$-neighbourhood of $E$.
\end{Def}

\subsection{Some constants} \label{const}
We define the constant $\la$ to the minimum over $|f_a'(z)|$ over all $(z,a) \in \La_a \times
\oli{B}(0,r)$. Since $\La=\La_0$ is hyperbolic, in general there is some $N > 0$ such that $|(f^N)'(z,a)| \geq \la_0 > 1$ for all $(z,a) \in \La_a \times \oli{B}(0,r)$ for some small $r > 0$, but for simplicity assume that $N=1$ so that $\la > 1$.

Let $\NN$ be a $\de'$-neighbourhood around
$\La$, such that $|f_a'(z)| \geq \la/2$, for all $z
\in \NN$ and $a \in \B(0,r)$. Moreover,
let $U$ be a $\de$-neighbourhood around the critical points of $f$ so
that $U \cap \NN = \emptyset$ and $0 < \de < 1/2$. Choose $r > 0$ such
that every $c_j(a)$ belongs to a $\de^{10}$-neighbourhood around
$Crit(f_0)$ for every $a \in \B(0,r)$. Moreover, let $U_l$ be a
$\de_l$-neighbourhood around $Crit(f)$, for some $\de_l \leq
\de$. These $U_l$ will be defined inductively later.

\section{Transversality}

We will in this section study the functions $x_j(a)$. First,
note that $x_j(0)=0$ for all $j$. We cannot have $x_j(a) \equiv 0$
for all $j$ by Theorem E in \cite{MSS} (see also Theorem A in
\cite{MA3}), because then all maps in $\B(0,r)$ would be Misiurewicz
maps. Hence we may assume that at least one $j$ has that $x_j(a)$ is
not identically equal to zero, i.e. $x_j$ has {\em finite order contact} (see definition below).
In $\B(0,r)$ the function $x_j$ is an analytic function in several
variables and has a power series expansion
\begin{equation} \label{x-series}
x_j(a) = \sum_{|\al| \geq 1} b_{\al}a^{\al}
%= a_1^{\al_1} a_2^{\al_2}
%\cdot \ldots \cdot a_n^{\al_n} \Phi(a),
\end{equation}
where $\al=(\al_1,\ldots,\al_{2d-2})$ is a multi-index, $\al_j \geq 0$
and $|\al| = \sum \al_j$.

%Let us proceed with the following definitions of transversality of the functions $x_j(a)$.

\begin{Def}
If $B(0,r)$ is a $1$-dimensional disk we say that the critical point $c_j(a)$ has contact of order $k$ if
\[
x_j(a) = v_j(a) - h_a (v(0)) = K_1a^K + \dots,
\]
for some $K_1 \neq 0$. If $x_j(a)$ is identically equal to zero we say
that $c_j(a)$ has contact of infinite order in $B(0,r)$.

If we consider $x_j(a)$ as a function of $a \in \B \subset \B(0,r)$
(where $\B$ is an open set of dimension $\leq 2d-2$),
then we say that $c_j$ has finite order contact in $\B$ if $x_j$ is
not identically equal to zero in $\B$.
\end{Def}

If it is evident in which set $x_j$ has finite order contact in, we just say
$x_j$ has finite order contact.
Dropping the index and writing $x(a),v(a)$ we mean $x_j(a),v_j(a)$
respectively for some index $j$. This index is chosen so that $x_j(a)$
has finite order contact unless otherwise stated. Also, we write
$v_a=v(a)$.

\subsection{Tangent cones}
We want to restrict to parameters where $x(a) \neq 0$ and $x'(a) \neq
0$.
%Obviously, for any $1$ complex-dimensional disk $B(0,r)$ we can associate
%a direction-vector $v \in \P(\C^{2d-3})$ which spans the plan in
%which $B(0,r)$ lies. Let us write $B_v=B_v(0,r)$ for this $B(0,r)$.
%Inside $B_v$ we can define the analytic function $x(a)$ resticted to $B_v$:
%\begin{equation} \label{xv}
%x_v(a) = K_1(v)a^{k(v)}+\ldots,
%\end{equation}
%where $K_1(v) \neq 0$ and $k(v) \geq 1$ are functions depending on
%$v$.
To this end, we construct a cone-like set of the form $V_0 \times \C$,
where $V_0 \subset \P(\C^{2d-3})$ is an open ball of directions, such that
$x'(a) \neq 0$ and $x(a) \neq 0$ for all $a \in V_0 \times
B(0,r) \sm \{0\}$ and for all $x(a)=x_i(a)$ which have finite oder contact in
$\B(0,r)$ (here $B(0,r) \subset \C$). Let us assume that $x_i(a)$ has
finite order contact for the set of indices $i \in I$. Each equation
from the the set of equations
\begin{align}
x_i'(a) &= 0, \quad i \in I \nonumber \\
x_i(a) &= 0, \quad i \in I \nonumber
\end{align}
defines an analytic set. Let us number these analytic sets as
$A_1,A_2, \ldots, A_n$. We now use the standard theory of
analytic sets (see e.g. \cite{Chirka}); if $A$ is an analytic set,
then the tangent cone
$C(A,0) \subset \C^{2d-2}$ to $A$ at $0$ is defined as set of vectors $v \in \C^{2d-2}$ such that
there exists $a_j \raw 0$, $a_j \in A$ and real numbers $t_j > 0$ such
that $t_j a_j \raw v$ as $j \raw \infty$. By \cite{Chirka} p.83, the set $C(A,0)$ is an
algebraic subset in $\C^{2d-2}$. Let $p(v)$ be the projection of
$\C^{2d-2}$ onto $\P(\C^{2d-3})$. Now, applying this to the sets $A_j$, we see that since $p(C(A_j,0)) \neq \P(\C^{2d-3})$ and $C(A_j,0)$
is closed there must be an open ball $V_0 \subset p( \cap_j C(A_j,0) )^c$.

Moreover, any $1$-dimensional disk $B(0,r) \subset \B(0,r)$ is determined by
a direction vector $v \in \P(\C^{2d-3})$. Let us pick a for $v$ representative direction vector  $v_0=(\al_1,\al_2,\ldots,\al_{2d-2})$ (i.e. so that $v_0$ has direction $v$). Then
the plane in which $B(0,r)$ lies can be
parameterised by $$\{a \in \C^{2d-2}: a=(\al_1 t, \al_2 t, \ldots, \al_{2d-2}t), t \in \C \}.$$
The expansion of $\th$ in $B(0,r)$, where $\th$ is either some $x_j(a)$ or a component of $x_j'(a)$, becomes
\begin{equation} \label{theta}
\th(t) = \sum_{k \geq k_0} p_k t^k + \ldots,
\end{equation}
where each $p_k=p_k(\al_1,\ldots,\al_{2d-2})$ is a polynomial in the variables $(\al_1,\ldots,\al_{2d-2})$. By Proposition 1 p. 83 in \cite{Chirka}, we have $\{ p_{k_0} = 0\} = C(A_j,0)$. Since $\cap_j C(A_j,0) \supset  C(\cap_j A_j,0)$ we have that for any $1$-dimensional disk $B(0,r)$ with direction $v \in V_0$, the expansion of any $x_j$ or component of $x_j'$ becomes
\begin{equation} \label{onevarexp}
\th(t) = K_1 t^k + \ldots,
\end{equation}
where $K_1$ is continuous function of the direction and $k$ is
constant for all $v \in V_0$. We say that $k$ is the order of $f$ in the cone $W = V_0 \times B(0,r)$. This $W$ is the starting {\em
  good cone} where the $x_j$ behaves nicely. The importance of
$W$ is that $x_j$ has bounded distortion on so called dyadic disks
defined as follows.
\begin{Def}
A disk $D \subset \R^n$ of dimension $\leq n$ is called a {\em $k$-Whitney disk } or simply a {\em
 Whitney disk} if
\begin{equation} \label{dyadic}
diam (D) \geq k dist(D,0).
\end{equation}
\end{Def}

We say that $x'$ has bounded distortion in some set $D$ if
\begin{equation} \label{xprim}
\| x'(a) - x'(b) \| \leq \vep \|x'(a) \|,
\end{equation}
for all $a,b \in D$, for some small (fixed) $\vep > 0$.

By compactness of $\oli{V}_1$ the function $K_1$ is uniformly continuous in $V_1$. The following lemma now follows easily from (\ref{onevarexp}).
\begin{Lem} \label{circles}
There is some $0 < k < 1$ such that every $x_j(a)$ and every component of $x_j'(a)$ has bounded distortion on $k$-Whitney disks in $W$.
\end{Lem}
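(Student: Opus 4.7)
The plan is to convert the one-variable expansion \eref{onevarexp} into a uniform estimate that compares $\theta'(a)$ at two points $a,b$ in a Whitney disk to a dominant homogeneous term. Here $\theta$ denotes either one of the finitely many $x_j$ with finite order contact or one of the components of some $x_j'$; there are finitely many such $\theta$, so it suffices to handle each one and then take the smallest value of $k$ at the end.

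First I would fix one such $\theta$ and recall from \eref{onevarexp} that along each complex line through $0$ with direction $v \in V_0$, parameterised by $a = tv$,
\[
\theta(tv) = K_1(v) t^{k} + t^{k+1} R(v,t),
\]
where $k$ is constant on $V_0$, $K_1(v)$ is continuous and non-zero, and $R(v,t)$ is bounded for $v \in \oli V_1 \subset V_0$ and $|t| \leq r$ by analyticity of $\theta$ together with compactness. Differentiating this expansion gives the corresponding leading term $k K_1(v) t^{k-1}$ for the restriction of $\theta'$ to the line. Shrinking $V_0$ to a compact sub-cone $\oli V_1$ on which all finitely many $K_1(v)$ stay uniformly bounded away from $0$ and $\infty$ (and the remainders stay bounded), we get constants $0 < c < C$ so that
\[
c |a|^{k-1} \leq |\theta'(a)| \leq C |a|^{k-1}, \qquad |\theta''(a)| \leq C |a|^{k-2},
\]
for every $a \in W$ (where $\theta''$ is interpreted as the full second derivative, applied in the direction of $a-b$ below).

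Next, I would exploit the geometry of a $k$-Whitney disk $D \subset W$ with radius small compared to $\dist(D,0)=:R$. Every $a \in D$ satisfies $|a| \asymp R$, and every pair $a,b \in D$ satisfies $|a-b| \leq \diam(D) \leq k R$. A standard mean-value estimate along the segment joining $a$ and $b$, using the bound on $\theta''$, gives
\[
\| \theta'(a) - \theta'(b) \| \leq \sup_{\xi \in [a,b]} \|\theta''(\xi)\| \cdot |a-b| \leq C R^{k-2} \cdot k R = C k R^{k-1}.
\]
Combined with the lower bound $\|\theta'(a)\| \geq c R^{k-1}$, this yields a distortion estimate
\[
\frac{\|\theta'(a) - \theta'(b)\|}{\|\theta'(a)\|} \leq \frac{C}{c} k,
\]
which is $\leq \vep$ once $k$ is taken smaller than $c\vep/C$. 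Taking the minimum of such $k$ over the finitely many choices of $\theta$ gives the desired uniform $k$.

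The main obstacle is the uniformity of the constants $c$ and $C$ over all directions in $V_0$ and all the $\theta$'s simultaneously. This is handled exactly by the compactness of $\oli V_1$ and the continuity of the leading coefficient $K_1(v)$, which is why the sub-cone $V_1$ was singled out just before the statement. A secondary point is that a Whitney disk need not lie on a single complex line through the origin; this is dealt with by working directly with the full gradient / Hessian of $\theta$ on $\B(0,r)$ rather than with the one-variable expansions, as above, and using that the one-variable expansions nonetheless control all partial derivatives uniformly on $W$.
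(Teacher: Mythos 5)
The paper itself offers no argument for this lemma beyond the single sentence preceding it, so there is nothing substantive to compare against; your proposal supplies exactly the kind of elaboration the author is hand-waving at, and the core mechanism --- uniform two-sided bounds on $\|\theta'(a)\|$ of order $|a|^{k-1}$ together with a uniform upper bound on the Hessian of order $|a|^{k-2}$, then a mean-value estimate across a Whitney disk --- is correct and does yield the claim. Two points are worth flagging. First, there is a notational collision: you use $k$ simultaneously for the order of vanishing in \eref{onevarexp} and for the Whitney constant, so the final line $CR^{k-2}\cdot kR = CkR^{k-1}$ mixes the two and reads as if the Whitney constant equals the order; renaming the Whitney constant $\kappa$ makes the estimate $C\kappa R^{k-1}/(cR^{k-1})=C\kappa/c$ transparent. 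Second, your last paragraph acknowledges but does not actually carry out the transition from the ray expansions \eref{onevarexp} to bounds on the full gradient and Hessian, and this is where the real work hides. The clean way is to return to the multivariate series \eref{x-series} and write $\theta(a)=p_{k_0}(a)+O(|a|^{k_0+1})$ with $p_{k_0}$ the leading homogeneous part; the cone $V_0$ was chosen precisely so that $p_{k_0}\neq 0$ on $\overline{V}_1\setminus\{0\}$, and then compactness gives $|p_{k_0}(a)|\asymp|a|^{k_0}$ on $W$. The upper bounds $\|\theta'(a)\|\lesssim|a|^{k_0-1}$ and $\|\theta''(a)\|\lesssim|a|^{k_0-2}$ follow since $p_{k_0}'$ and $p_{k_0}''$ are homogeneous of degrees $k_0-1$ and $k_0-2$; the crucial lower bound $\|\theta'(a)\|\gtrsim|a|^{k_0-1}$ follows from Euler's identity $\langle a, p_{k_0}'(a)\rangle = k_0\, p_{k_0}(a)$, which forces $\|p_{k_0}'(a)\|\geq k_0|p_{k_0}(a)|/|a|$. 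Merely differentiating the one-variable expansion in $t$ only controls the radial derivative, which suffices for the lower bound but not for the Hessian; the homogeneous-polynomial argument above is the missing link. Finally, note that the paper's definition of a $k$-Whitney disk literally only imposes $\operatorname{diam}(D)\geq k\operatorname{dist}(D,0)$; your estimate uses the complementary inequality $\operatorname{diam}(D)\leq k\operatorname{dist}(D,0)$, which is what is actually needed and what the paper clearly intends given how Whitney disks are constructed (with $r_0=k_0\|a_0\|$) in Lemma~\ref{initdist}, but it is worth being explicit that you are reading the definition this way.
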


Moreover, we will use the following folklore result which follows from
\cite{SvS} by S. van Strien (see also \cite{AE} by A. Epstein). In \cite{SvS}
the definition of Misiurewicz maps differs. However,
in the case where the Julia set is the whole sphere then our definition coincides with that of van Strien.
%But if the Julia set of not the whole sphere then we have to be a
%little careful. In any case the following follows from \cite{SvS},
%with small changes.

\begin{Thm} \label{ttrans}
If $\B(0,r)$ is a $2d-2$ dimensional ball of normalised rational maps,
then the function $G: a \raw (x_1(a), \ldots,x_{n}(a))$ is a local
immersion unless $f$ is a flexible Latt\'es map.
\end{Thm}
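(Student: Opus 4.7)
My plan is to argue by contradiction, following the transversality framework of van Strien \cite{SvS} and Epstein \cite{AE}: I would show that the failure of immersion at $a=0$ produces a nontrivial $f$-invariant measurable Beltrami differential on $\hat{\C}=J(f)$, which by Sullivan's rigidity theorem forces $f$ to be a flexible Latt\`es map.

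Suppose $G$ is not a local immersion at the origin; then $dG_0$ has a nonzero kernel vector $v \in T_0 \B(0,r)$ with $\partial_v x_j(0) = 0$ for every marked critical value. First I would differentiate the equivariance relation $f_a \circ h_a = h_a \circ f_0$ on $\La$ in the direction $v$, obtaining a continuous vector field $\xi$ on $\La$ that satisfies the linearised cocycle equation. The hypothesis $\partial_v x_j(0)=0$ translates exactly to $\partial_v v_j(0) = \xi(v_j(0))$, i.e.\ the critical-value velocities agree with the evaluation of $\xi$ at those points. This is precisely the compatibility required to pull $\xi$ back past the critical points without a branching obstruction.

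Next I would inductively pull $\xi$ back through $f_0$: the compatibility at critical values allows the definition of $\xi_k:=(f_0^k)^*\xi$ on $\bigcup_k f_0^{-k}(\La)$. Because $\La$ is hyperbolic and contains repelling periodic points, its backward orbits are dense in $J(f_0)=\hat{\C}$; the uniform contraction of inverse branches provided by hyperbolicity on $\La$, together with the $\la$-lemma of Ma\~n\'e--Sad--Sullivan, guarantees that the $\xi_k$ assemble into an infinitesimal holomorphic motion $\Xi$ defined on a dense subset of $\hat{\C}$. Slodkowski's extension theorem then upgrades $\Xi$ to a genuine holomorphic motion of the entire sphere, whose associated infinitesimal Beltrami differential $\mu \in L^\infty(\hat{\C})$ satisfies $f_0^*\mu = \mu$.

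Finally, since $f$ has $J(f)=\hat{\C}$ and is not a flexible Latt\`es map, Sullivan's no-invariant-line-field theorem rules out nontrivial measurable $f$-invariant Beltrami differentials on $\hat{\C}$. Hence $\mu\equiv 0$, so $\Xi$ is the infinitesimal generator of a one-parameter family of M\"obius transformations; but such deformations are killed by passing to the normalised parameter space $[\RR^d]$, forcing $v=0$ and contradicting our choice. The principal obstacle is the pullback step: one must verify both the consistency of the $\xi_k$ across preimages of critical values and the fact that the assembled motion is holomorphic in the parameter, not merely pointwise defined. This is exactly the technical heart of the arguments in \cite{SvS} and \cite{AE}, which I would invoke rather than redo.
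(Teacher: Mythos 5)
The paper does not actually prove Theorem~\ref{ttrans}: it is stated as a folklore result that ``follows from \cite{SvS} by S.~van Strien (see also \cite{AE} by A.~Epstein)'' and is used without proof. The only proof sketch in the source is commented out, and it addresses the complementary case $J(f)\neq\hat{\C}$ by a \emph{finite} argument: parameterise each set $Z_i=\{a:x_j(a)=0,\ j\neq i\}$ univalently by $\Psi_i$, compare $x_i\circ\Psi_i(t)=t^d g(t)$ with $\ti{x}_i\circ\Psi_i$ built from a nearby strictly preperiodic point $q_i$, count $d$ zeros by the Argument Principle, and use rigidity (no invariant line field, Lemma~2.3 of \cite{MA3}) to exclude multiplicities and distinct qc-conjugate solutions. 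Your proposal takes a genuinely different, \emph{linearised} route: a kernel vector $v$ of $dG_0$ produces a vector field $\xi$ on $\La$; the identity $\partial_v x_j(0)=0$ is exactly the condition $\xi(v_j)=\partial_a f(c_j,0)$ that removes the branching obstruction to pulling $\xi$ back; iterated pullback plus density of $\bigcup_k f_0^{-k}(\La)$ is meant to yield an $f_0$-invariant Beltrami differential, and Sullivan's no-invariant-line-field theorem on $\hat{\C}$ then kills it and, after normalisation, kills $v$. This is the infinitesimal version of the same rigidity argument, closer in spirit to Epstein's cohomological treatment than to van Strien's Argument Principle count.

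The outline is sound, but the central technical step is glossed over. Going from a continuous pulled-back vector field on the dense set $\bigcup_k f_0^{-k}(\La)$ to an $L^\infty$ Beltrami coefficient invariant under $f_0$ is not automatic: the compatibility condition and the contraction of inverse branches give finiteness and continuity of the pullback at critical points and exponential decay along backward orbits, but not boundedness of the distributional $\bar\partial$-derivative, which is what defines a Beltrami form. Also, Slodkowski's theorem extends \emph{holomorphic motions}, not their infinitesimal generators; the correct tool here is an extension theorem for quasiconformal vector fields (or, dually, integrability of an invariant quadratic differential). You acknowledge that these points are the technical heart of \cite{SvS} and \cite{AE} and explicitly defer them. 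Given that the paper itself defers the entire theorem to those same references, your proposal is a reasonable and compatible roadmap, but it travels a different road than the one sketched (in comments) in the source, and it leaves the same technical core unproved.
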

Hence if $f$ is not a flexible Latt\'es map, the tangent
vectors $x_j'(0)$ are all linearly
independent. By continuity this also holds in the some small parameter
ball $\B(0,r)$.

If critical points of higher order exist for $f$, then we change to the
new variables described in Subsection \ref{splitting}. Although it is
clear that in this new space $\B^{n-2}$, $a=0$ is still an isolated
zero of $G$ (because $\psi$ is proper), the new space may not be normalised.

In this case we will perturb our original function $f$ into a new
Misiurewicz map, for which every critical point is simple, and still
the Julia set is the whole sphere. To do this, let $c$ be a higher
order critical point and assume that $c=c_1(0)= \ldots = c_N(0)$, where
the functions $c_j$ are analytic in $\B^{n-2}$, $1 < N \leq 2d-2$
(i.e. $c$ has order $N$).

For $1 \leq i \leq N$,
consider the set $Z_i = \{ a \in \B^{n-2} : x_j(a)=0, \text{ for all $j \neq
  i$}  \}$. By Lemma 2.3 in \cite{SvS}, there is a locally univalent
parameterisation of $Z_i$:
\[
\Psi_i: \C \raw Z_i,
\]
where $\Psi_i(0)=0 \in \B^{n-2}$.
Following \cite{SvS} (Lemma 3.1) we note that there
is some periodic repelling point $q_i(\la)$ arbitrarily close to the
critical value $v_i(0)=f^{k+1}(c_i(0),0) \in \La$. For $\la \in \C$,
put
\[
X_i(\la) = f_{\Psi(\la)}^{k+1}(c_i(\la)) - q_i(\la).
\]
We now note note that if $(x_i  \circ \Psi_i)'(0) \neq 0$, for all
$i$, then the tangent vectors $x_i'(0)$ are linearly
independent and $G$ is an immersion as in Theorem \ref{ttrans}. In
this case we do not need to change our function $f$.

Hence suppose that  $(x_i  \circ \Psi_i)'(0) = 0$. That
means that $(x_i  \circ \Psi_i)(\la) = g(\la) \la^d$, where $d \geq 2$
and $g(\la) \neq 0$ in a neighbourhood $B(0,\de)$ of $0$. We claim that
$X_i \circ \Psi_i$ has $d$ zeros in $B(0,\de)$ if $q_i=q_i(0)$ is
chosen sufficiently close to $v_i(0)$. The proof of this claim is
identical to that of \cite{SvS}, pp. 46-48. With $U_i(\la) = q_i(\la)
- v_i(\la)$ we get
\[
t^d g(t) + U_i \circ \Psi_i(t) = X_i \circ \Psi_i(t).
\]
Hence if $u_i = U_i \circ \Psi_i$ is sufficiently close to zero
(i.e. $q_i$ is sufficiently close to $v_i$), the image of $\partial B(0,\de)$ under the function $t^d \frac{g(t)}{u(t)}$ is a curve that encircles $-1$ $d$ times. By the Argument Principle there is at least one solution $t_0 \in B(0,\de)$ to
\[
t^d \frac{g(t)}{u(t)} = -1.
\]
For this $t_0$, the function $f_{\Psi_i(t_0)}$ is a new Misiurewicz
map different from $f_0$ and where $c_i(\Psi(t_0))$ is simple. The other critical
points $c_j(\Psi(t_0))$, $j \neq i$ still coincide and form a critical
point of order $N-1$.

Repeating this argument $N-1$ times we obtain a
Misiurewicz map such that all critical points are simple.

%%%%%%%%%%%%%%%%%%%%%%%%%%%%%%%%%% van Strien %%%%%%%%%%%%%%%%%%%%%%%%
\comm{
We supply the proof which is almost a copy of the proof of the main theorem
in \cite{SvS} (see pp. 46-47, 51-52 Case 1).
\begin{proof}
In the case when the Julia set is the whole sphere the theorem follows directly from \cite{SvS}. Suppose
therefore that $J(f) \neq \hat{\C}$.

Lemma 2.3 in \cite{MA3} says that a Misiurewicz map
cannot carry an invariant linefield on its Julia set unless it is a
Latt\'es map. Hence since $f$ cannot carry an invariant line field, the map $G(a)$ has an isolated zero at $a=0$.
By classical results (see \cite{Chirka}) there is a univalent parameterization $\Psi_i: \C \raw Z_i$ of the set
$Z_i = \{a : x_j(a)=0, \forall j \neq i \}$, where $\Psi_i(0)=0$. We now show that $(x_i \circ \Psi_i)'(0) \neq 0$.
Assume the contrary, i.e. there is some $d \geq 2$ such that $x_i \circ \Psi_i(t) = t^d g(t)$, where $g(t)$
is analytic and $g(0) \neq 0$. Now there is some nearby strictly preperiodic
point $q_i$ to $f^{k+1}(c_i)=v_i$, by Lemma 3.1 in
\cite{SvS} ($v_i$ belongs to the hyperbolic set $\La$). Note that
$q_i$ also moves holomorphically and we can assume that $q_i \in \La$.

Now put
\[
\ti{x}_i(a) = f_a^{k+1}(c_i(a))-q_i(a) = v_i(a)-q_i(a) = v_i(a) -
h_a(v_i) + h_a(v_i) - q_i(a),
\]
where $h$ s the holomorphic motion of $\La$. Then
\[
\ti{x}_i(a) = x_i(a) + u_i(a),
\]
where $u_i(a) = h_a(v_i) - q_i(a)$. So we have $x_i \circ \Psi_i (t) =
t^dg(t)$. We may assume $g(t) \neq 0$ in the ball $B(0,\de)$ for some
$\de > 0$. Now we show that $\ti{x}_i \circ \Psi_i(t)$ has several
distinct zeros
in $B(0,\de)$ provided $q_i(a)$ is sufficiently close to $h_a(v_i)$.

We have
\[
\ti{x}_i \circ \Psi_i(a) = x_i \circ \Psi_i(t) + u_i \circ \Psi_i(t) = t^d g(t) + u_i \circ \Psi_i(t).
\]
So $\ti{x}_i \circ \Psi_i(t) = 0$ if and only if
\[
x_i \circ u_i (t) = t^d g(t).
\]
But if $q_i(a)$ is sufficiently close to $h_a(v_i(a))$ then the left hand side
is so small so that the map $t^d g(t)$ encircles the values of
$x_i(a) \circ u_i(a)$ exactly $d$ times. By the Argument Principle there are
$d$ zeros of $\ti{x}_i \circ \Psi_i$ in $B(0,\de)$.
Now the Subclaim on p. 47 in \cite{SvS} implies that these zeros have
multiplicity one and we are done.

If $t,t'$ are two of these distinct solutions, it follows that $\Psi_i(t)$ and
$\Psi(t')$ are both distinct zeros to $\ti{x}_i$. This means that the
functions $f_{\Psi_i(t)}$ and $f_{\Psi_i(t')}$ are quasiconformally
conjugate. But then Lemma 2.3 in \cite{MA3} implies that $t=t'$, a
contradiction.

To conclude the proof, we now note that $x_i'(0) (v) \neq 0$, where
$v$ is any tangent vector to $Z_i$. This means that the normal vector
$n_i=x_i'(0)$ to the surface $x_i=0$ is not a linear combination of the other
normal vectors $n_j=x_j'(0)$, $j \neq i$. Hence the vectors $n_i$
must be linearly independent, and the map $G$ must be an immersion.
%We know by \cite{MA3} that the set $I$ of indices $j$ for which $x_j$ has
%finite order contact, it not empty.
%Hence at least one $x_j$ is not identically equal to zero.
\end{proof}
}

%%%%%%%%%%%%%%%%%%%%%%%%%%%%% van Strien %%%%%%%%%%%%%%%%%%%%%%%%%

%However, we will mostly consider $1$-dimensional slices of
%$\B(0,r)$ when we refer to $x(a)$ and $x'(a)$, since $x'(a)$ is a vector if %$x_j$ depends on several variables.
%From here on $x'(a)$ always means the derivative in some direction
%unless otherwise stated.

\subsection{Outline of proof of Theorem \ref{cluster}}
To prove the main result we take a full-dimensional Whitney parameter disk $\B_0 \subset W$. Suppose that $c_1$ has finite orde contact in $\B_0$.
We first show that the set $\xi_{n,1}(\B_0)$
grows up to some definite size before it leaves $\NN$ (Lemma \ref{initdist}). By bounded distortion
the set $\xi_{n,1}(\B_0)$ will contain a disk of diameter at least
$S=S_0$, where $S$ is some ``large scale''.

By
normality and compactness we show that for some $m$,
$\xi_{n+m,1}(B(0,r)) $ covers the whole Riemann sphere apart from at
most $2$ points. In particular, $c_1(a)$ and two of its pre-images
cannot be omitted by $\xi_{n+m,1}(B(0,r))$. Hence $c_1(B(0,r)) \subset
\xi_{n+m,1}(B(0,r))$ and there is a solution to
$\xi_{n+m,1}(a)-c_1(a)=0$ in $B(0,r) \subset \B(0,r)$. Next we will pass to the analytic subset $\B_0 \subset W$ where
$\xi_{n+m,1}(a)-c_1(a)=0$, and argue inductively.

%In the next step, if
%there is no remaining critical point which has finite order contact in $\B_0$ apart form
%$c_1$ then it means that the family $\B_1$ is a family of new
%Misiurewicz maps. In fact there will be a ball In this case we replace the original Misiurewicz map
%$f$ with any function in $\B_1$. Note that the number of critical
%points in the Julia set is reduced by at least one.

In the next step, we consider $c_2$, which also has to have finite
order contact. In fact, assume that there are no more critical points
than $c_1$ of finite order contact. Then there would be a small ball $\B$
around the solution $a$ to $\xi_{n+m,1}(a)-c_1(a)=0$, where $\B$ is a
family of Misiurewicz maps. This is impossible by \cite{MA3}.

Now, we try to connect this $c_2$ with itself also in the same way as
we did with $c_1$.
However, to continue we need to have control of the shape and size of $\B_1$.
These results are mainly dealt with in the sections \ref{Distortion}
and \ref{Closing}. We show
that we have good control of the geometry of $\B_1$ in Whitney disks,
so that the
parameter function $x_2$ maps $\B_1$ onto small circles, so that
$\xi_{n,2}(\B_1)$
in turn grows to another large scale $S_1$ (which is typically less than $S_0$).
Then, as in the previous case for $c_1$, we can use non-normality and
compactness again to
get another $N_1$ for which $\xi_{n+m,2}(a)-c_2(a)=0$, for some $a \in \B_1$ and
$m \leq N_1$. A new manifold $\B_2 \subset \B_1$ is thereby formed,
where $\xi_{n+m,2}(a)-c_2(a)=0$.
We continue in this manner until all critical points are in the Fatou set.

\comm{
To this end, one of the main tools is strong distortion estimates for
Whitney disks in parameter space. More precisely, given the cone $W \subset \B(0,r)$ we
will consider smaller open {\em Whitney disks} $D_0=\B(a_0,r_0)$, $D_0
\subset W$ where $r_0/|a_0| = k_0$, for some $0 < k_0 < 1$. We say that $D_0$ is a {\em $k_0$-Whitney disk}. One can also define Whitney disks with lower dimension than $2d-2$ with the analoguous defition.

We show that the set $\B_0$ contains a $1$-dimensional $k_0$-Whitney disk $D_0$ and study the orbits $\xi_{n,j}(D_0)$ for some $j \in \{1,\ldots,2d-2\}$.
Then we let $\xi_{n,j}(D_0)$ grow exponentially under bounded
distortion up to the large scale. After this we use a non-normality
argument to get that for some finite $m \leq N$ (where $N$ is a constant), $\xi_{n+m,j}(D_0)$ covers some $\de_0$-neighbourhood
of the critical points (here we use that the transversal critical
point $c_j(a)$ belongs to the Julia set $J(f)$ for some $a$ "close to the center" of $\B_0$). For some (smallest) $m$ we have $\xi_{n+m,j}(D_0) \supset c_{j}(D_0)$. From this we deduce
that there is solution in $D_0$ to the equation
$\xi_{n+m,j}(a)=c_{j}(a)$. After this, a new submanifold $\B_1 \subset \B_0$ is found in which $\xi_{n+m,j}(a)=c_{j}(a)$ and we repeat the argument until all critical points are in the Fatou set.
}

\section{Distortion lemmas} \label{Distortion}

In this section we state the necessary distortion lemmas that will be
needed to get a Whitney parameter disk to grow to the large scale
before it leaves $\NN$. Many lemmas in this section are proven in
\cite{MA3} (and also in \cite{MA}) in a one-dimensional version.
% and therefore proofs will be rare in this section.
In this section assume always that $c=c_j$ is transversal, i.e. $x(a)=x_j(a)=c_j(a)-v_j(a)$ is not identically equal to zero. Recall the notation $v_j(a)=v_a$ for the critical values.

Let us start with the following lemma (see \cite{WR}).
\begin{Lem} \label{prod-dist}
Let $u_n \in \C$ be complex numbers for $1 \leq n \leq N$. Then
\begin{equation}
\biggl| \prod_{n=1}^N (1+u_n)-1 \biggr| \leq
\exp \biggl(\sum_{n=1}^N |u_n| \biggr) - 1.
\label{ineq-1}
\end{equation}
\end{Lem}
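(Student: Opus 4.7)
The plan is to prove this by a clean two-step reduction: first replace the signed product with its moduli, then pass from the product to the exponential via the elementary inequality $1+t \leq e^t$.

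First I would expand the product combinatorially. Writing
\[
\prod_{n=1}^N (1+u_n) - 1 = \sum_{\emptyset \neq S \subseteq \{1,\ldots,N\}} \prod_{n \in S} u_n,
\]
the triangle inequality immediately gives
\[
\biggl|\prod_{n=1}^N (1+u_n) - 1\biggr| \leq \sum_{\emptyset \neq S \subseteq \{1,\ldots,N\}} \prod_{n \in S} |u_n| = \prod_{n=1}^N (1+|u_n|) - 1.
\]
Second, since $1 + t \leq e^t$ for every $t \geq 0$, applying this to each factor $1+|u_n|$ and multiplying yields
\[
\prod_{n=1}^N (1+|u_n|) \leq \prod_{n=1}^N e^{|u_n|} = \exp\biggl(\sum_{n=1}^N |u_n|\biggr),
\]
and subtracting $1$ from both sides gives the stated bound.

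There is no real obstacle here; the only thing to be slightly careful about is that the combinatorial expansion and the triangle inequality give a sharp intermediate bound involving $\prod(1+|u_n|)$, so nothing is lost before we invoke $1+t \leq e^t$. If one prefers to avoid the combinatorial expansion, an equivalent route is induction on $N$: the base case $N=1$ reduces to $|u_1|\leq e^{|u_1|}-1$, and for the inductive step one writes $\prod_{n=1}^N(1+u_n)-1 = \bigl(\prod_{n=1}^{N-1}(1+u_n)-1\bigr) + u_N\prod_{n=1}^{N-1}(1+u_n)$, bounds the first term by the inductive hypothesis and the modulus of the product in the second term by $\exp(\sum_{n<N}|u_n|)$, and then uses $(1+|u_N|)\leq e^{|u_N|}$ to collapse the result into $\exp(\sum_{n=1}^N|u_n|)-1$.
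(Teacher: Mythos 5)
The paper does not actually supply a proof of this lemma; it simply cites it from \cite{WR} (it is a standard fact, appearing for example in the chapter on infinite products in Rudin's \emph{Real and Complex Analysis}). Your argument is correct and is precisely the standard proof given there: expand the product, apply the triangle inequality to reduce to the case of nonnegative moduli, obtaining the sharp intermediate bound $\prod_{n=1}^N(1+|u_n|)-1$, and then pass to the exponential via $1+t\leq e^t$. Nothing is missing, and the alternative inductive route you sketch at the end is also valid.
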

The following lemma is a modified version of Lemma 3.2 in \cite{MA3}.
\begin{Lem}[Main Distortion Lemma] \label{main-dist}
For each $\vep > 0$ there exists an $r > 0$ and $\de' > 0$ such that the following holds. Let $a,b \in W$.
Then as long as $f_a^j(v_a),f_b^j(v_b) \in \NN$ (recall that $\NN$ depends on $\de'$), for all $0 \leq j \leq n$ we have
\[
\biggl| \frac{(f_a^n)'(v_a)}{(f_b^n)'(v_b)} - 1 \biggr| \leq \vep.
\]
\end{Lem}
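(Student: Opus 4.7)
My plan is to use the standard telescoping decomposition
\[
\frac{(f_a^n)'(v_a)}{(f_b^n)'(v_b)} = \prod_{j=0}^{n-1} \frac{f_a'(z_j)}{f_b'(w_j)}, \qquad z_j := f_a^j(v_a),\ w_j := f_b^j(v_b),
\]
and feed this into Lemma \ref{prod-dist} with $1+u_j = f_a'(z_j)/f_b'(w_j)$, so that the problem reduces to making $S_n := \sum_{j=0}^{n-1}|u_j|$ smaller than $\log(1+\vep)$ by taking $r,\de'$ small. On $\NN$ the map $(z,a) \mapsto f_a'(z)$ is uniformly Lipschitz in both variables and $|f_a'(z)| \ge \la/2$, so
\[
|u_j| \le C_0\bigl(|z_j - w_j| + |a-b|\bigr).
\]

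The key idea for bounding $|z_j - w_j|$ is to shadow each forward orbit by the holomorphic motion of the hyperbolic set $\La$. Set $\mu_{j,a} := h_a(f^j(v(0))) \in \La_a$ and $\mu_{j,b} := h_b(f^j(v(0))) \in \La_b$; both sit in $\NN$ and obey the same recursion as $z_j, w_j$ under $f_a$ and $f_b$ respectively. Since the displacement $z_j - \mu_{j,a}$ stays in $\NN$ and is expanded by a factor of at least $\la/2$ under each forward iterate of $f_a$, while its modulus at $j = n$ is at most $\de'$, running the expansion backwards yields
\[
|z_j - \mu_{j,a}| \le (2/\la)^{n-j}\de',
\]
and identically for $|w_j - \mu_{j,b}|$. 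Combined with the Lipschitz bound $|\mu_{j,a} - \mu_{j,b}| \le C_1|a-b|$ coming from the holomorphic dependence of the motion on the parameter, the triangle inequality gives $|z_j - w_j| \le 2(2/\la)^{n-j}\de' + C_1|a-b|$. Summing over $j$,
\[
S_n \le C_2\,\de' + C_3\, n\, |a-b|.
\]

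The delicate point, and what I expect to be the main obstacle, is the non-geometric term $n|a-b|$: I need the orbit length $n$ not to grow too fast relative to $1/|a-b|$. Applying the same backwards-expansion argument to the parameter function itself, the hypothesis $f_a^j(v_a) \in \NN$ for $0 \le j \le n$ forces $|x(a)| \le (2/\la)^n\de'$, whence $n$ is at most a constant multiple of $\log(\de'/|x(a)|)$, and similarly for $b$. Because $W$ is a good cone in which $x$ has bounded distortion on Whitney disks (Lemma \ref{circles}), choosing $r$ sufficiently small (with $\de'$ adjusted accordingly) makes $n|a-b|$ arbitrarily small. Substituting back into Lemma \ref{prod-dist} then completes the proof.
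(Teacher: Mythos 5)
Your proof is correct and follows essentially the same route as the paper's: shadow the critical orbit by the holomorphically moving orbit $\mu_j$ in the hyperbolic set (backwards contraction along $\NN$ gives the geometric series controlled by $\de'$), then kill the accumulated parameter term using the bound $n \le -C\log|x(a)| \le -C'\log\|a\|$ forced by the orbit remaining in $\NN$. The only cosmetic difference is that you compare the parameters $a$ and $b$ directly, producing the term $n|a-b|$, whereas the paper compares each of $a,b$ to $0$ via the power-series expansions of $f_t'(\mu_j(t))$, producing $n\|a\|^l$ and $n\|b\|^l$; both versions close with the same $-\|a\|\log\|a\| \to 0$ argument.
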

The same statement holds if one replaces $v(s)=\xi_0(s)$, $s=a,b$, by $\mu_0(t)$, $t=a,b$.
\begin{proof}
The proof goes in two steps. Let us first show that
\begin{equation}
\biggl| \frac{(f_t^n)'(\mu_0(t))}{(f_t^n)'(\xi_0(t))} - 1 \biggr| \leq \vep_1,
\label{close-to-1}
\end{equation}
where $\vep_1=\vep(\de')$ is close to $0$. We have
\begin{align}
\sum_{j=0}^{n-1} \biggl| \frac{f_t'(\mu_j(t)) - f_t'(\xi_j(t))}{f_t'(\xi_j(t))} \biggr| &\leq C \sum_{j=0}^{n-1} |f_t'(\mu_j(t)) - f_t'(\xi_j(t))| \nonumber \\
&\leq C \sum_{j=0}^{n-1} |\mu_j(t)-\xi_j(t)| \nonumber \\
&\leq C \sum_{j=0}^{n-1} \la^{j-n}|\mu_n(t)-\xi_n(t)| \leq C(\de'), \nonumber
\end{align}
where we used the hyperbolicity of the hyperbolic set $\La_t$.
%equation (\ref{hyper}).
By Lemma \ref{prod-dist}, (\ref{close-to-1}) holds of $\de'$ is small enough.
Secondly, we show that
\[
\biggl| \frac{(f_t^n)'(\mu_0(t))}{(f_s^n)'(\mu_0(s))} - 1\biggr|\leq \vep_2,
\]
where $\vep_2=\vep_2(\de')$ is close to $0$. Put
$\la_{t,j}=f_t'(\mu_j(t))$. Since $\la_{t,j}$ are all analytic in $t$ we have
$$\la_{t,j}=\la_{0,j}(1+\sum_{|\al| \geq k_j} c_{\al,j} t^{\al}),$$ where $t = t_1 \cdot \ldots
\cdot t_{2d-2}$, $\al$ is a multi-index and $k_j \geq 1$. Moreover, the condition $R_a^j(v_a) \in \NN$ implies that
$n \leq -C \log|x(t)| \leq  -C' \log \|t\|$ for some constant $C'$,
where $\|t\|$ is the norm of $t$ viewed as a vector in $\C^{2d-2}$. We have
\[
\frac{(f_t^n)'(\mu_0(t))}{(f_s^n)'(\mu_0(s))}  = \prod \frac{\la_{t,j}}{\la_{s,j}} = \prod_{j=0}^{n-1}
\frac{(1+\sum_{|\al|=k_j} c_{\al,j} t^{\al} +
  \ldots)}{(1+\sum_{|\al|=k_j}c_{\al,j} s^{\al} + \ldots)}.
%=\frac{1+cn \sum_{|\al|=k_j}t^{\al} + \ldots}{1+cn \sum_{|\al|=k_j} s^l + \ldots}.
\]
Both the last numerator and denominator in the above equation can be
estimated by $1 + C'' n \|t\|^l$ and $1+C'' n \|s\|^l$ respectively, for
some constant $C''$ and integer $l \geq 1$. Since $n \leq -C' \log \|t\|$ the numerator and
denominator are bounded by $1 + \OO((\log \|t\|) \|t\|^l)$ and $1 + \OO((\log \|s\|)\|s\|^l)$ respectively, which both can be made
arbitrarily close to $1$ if $r > 0$ is small enough. From this the lemma follows.
\end{proof}

We reformulate Lemma 3.3 in \cite{MA3} in the following vector
form.
\begin{Lem} \label{first-dist}
Let $\vep > 0$. If $\de' > 0$ is sufficiently small, then for every $0 < \de'' < \de'$ there exists $r > 0$ such that the following holds. Let $a \in W$ and assume that $\xi_k(a) \in \NN$, for all $k \leq n$ and $|\xi_n(a)-\mu_n(a)| \geq \de''$. Then
\[
\| \xi_n'(a) - (f_a^n)'(\mu_0(a))x'(a) \|  \leq \vep \|\xi_n'(a)\|.
\]
\end{Lem}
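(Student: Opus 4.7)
The plan is to expand $\xi_n'(a)$ by the chain rule, writing $\xi_{k+1}(a) = f_a(\xi_k(a))$ and treating the partial dependence of $f_a$ on $a$ as a separate ``$\partial_a$-error'' at each step. One obtains
\[
\xi_n'(a) = (f_a^n)'(v_a)\, v'(a) + \sum_{j=0}^{n-1} (f_a^{n-1-j})'(\xi_{j+1}(a))\, \partial_a f_a(\xi_j(a)),
\]
and the analogous formula for $\mu_n'(a)$ with the orbit of $\mu_0(a)$ in place of the orbit of $v_a$. Since $\mu_n(a)=h_a(f_0^n(v_0))$ and $h$ is a holomorphic motion of the hyperbolic set $\La$, one gets the crucial fact $\|\mu_n'(a)\|\le C_0$ uniformly in $n$.

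Using $x'(a) = v'(a)-\mu_0'(a)$, I would write
\[
\xi_n'(a) - (f_a^n)'(\mu_0(a))\, x'(a) = \mathrm{I} + \mathrm{II} + \mathrm{III},
\]
where $\mathrm{I}=[(f_a^n)'(v_a)-(f_a^n)'(\mu_0(a))]\,v'(a)$, $\mathrm{II}=\mu_n'(a)$, and $\mathrm{III}$ is the difference of the two sums above. Term $\mathrm{I}$ is controlled directly by the Main Distortion Lemma \ref{main-dist}, giving $\|\mathrm{I}\|\le \vep_0 \,|(f_a^n)'(\mu_0(a))|\,\|v'(a)\|$ with $\vep_0$ as small as desired. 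Term $\mathrm{II}$ is bounded by $C_0$. Term $\mathrm{III}$ is handled by writing each summand as
\[
(f_a^{n-1-j})'(\xi_{j+1})\,[\partial_a f_a(\xi_j)-\partial_a f_a(\mu_j)] + [(f_a^{n-1-j})'(\xi_{j+1})-(f_a^{n-1-j})'(\mu_{j+1})]\,\partial_a f_a(\mu_j),
\]
then estimating the first piece by the Lipschitz property of $\partial_a f_a$ together with backward contraction on $\NN$, namely $|\xi_j(a)-\mu_j(a)|\le (2/\la)^{n-j}|\xi_n(a)-\mu_n(a)|$, and the second piece by the Main Distortion Lemma applied to $(f_a^{n-1-j})'$. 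Summing the geometric series shows $\|\mathrm{III}\|$ is at most $\vep_1\,|(f_a^n)'(\mu_0(a))|$ plus a term bounded in $n$.

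It remains to show that the ``main'' quantity $\|(f_a^n)'(\mu_0(a))\,x'(a)\|$ is large enough to absorb both $\mathrm{II}$ and the bounded part of $\mathrm{III}$. The hypothesis $|\xi_n(a)-\mu_n(a)|\ge\de''$, together with the linearization of $f_a$ along the hyperbolic orbit $\{\mu_j(a)\}$, forces $|(f_a^n)'(\mu_0(a))|\ge c\,\de''/|x(a)|$. On the other hand, on the Whitney cone $W$ the bounded distortion estimate of Lemma \ref{circles} gives $|x'(a)|\gtrsim |x(a)|/|a|$, so
\[
\|(f_a^n)'(\mu_0(a))\,x'(a)\|\,\gtrsim\, c\,\de''/|a|,
\]
which can be made arbitrarily large by choosing the parameter ball radius $r$ small. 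Consequently $\mathrm{II}$ and the bounded part of $\mathrm{III}$ are $\le\tfrac{\vep}{2}\,\|(f_a^n)'(\mu_0(a))\,x'(a)\|$ for $r$ small, while $\mathrm{I}$ and the remaining part of $\mathrm{III}$ are $\le\tfrac{\vep}{2}\,\|(f_a^n)'(\mu_0(a))\,x'(a)\|$ once the distortion parameters $\vep_0,\vep_1$ are small (which holds once $\de'$ is small). Since the estimate also shows $\|\xi_n'(a)\|$ is comparable to $\|(f_a^n)'(\mu_0(a))\,x'(a)\|$, the desired inequality follows.

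The main obstacle is term $\mathrm{III}$: the orbits $\xi_j(a)$ and $\mu_j(a)$ need not be close for $j$ near $n$, and the $(f_a^{n-1-j})'$ factors can be huge. The device that saves the day is the backward contraction on $\NN$, which makes the summands small in $j$ through the exponential factor $(2/\la)^{n-j}$, so that the sum is dominated by the terms with $j$ close to $0$ where $\xi_j\approx\mu_j$ and everything telescopes nicely via Lemma \ref{main-dist}.
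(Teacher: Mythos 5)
Your decomposition is genuinely different from the paper's, and the difference matters. The paper first records the algebraic identity
\[
\xi_n(a)=x(a)\,(f_a^n)'(\mu_0(a))+\mu_n(a)+E_n(a),
\]
with $|E_n|\lesssim\de'$, and then \emph{differentiates this identity}. The point of doing it in this order is that the distortion error $E_n$ already carries the factor $x(a)$; after differentiating, the dominant piece of the error carries a factor $x'(a)$, so it sits in front of exactly the same quantity $\|(f_a^n)'(\mu_0(a))\,x'(a)\|$ that one needs to beat. The remaining pieces are either $|x(a)|\cdot n\cdot O(1)$, which is absorbed because $|x(a)|\,n/\|x'(a)\|\to0$ on $W$, or bounded in $n$, which is absorbed because $|(f_a^n)'(\mu_0(a))|\,\|x'(a)\|\to\infty$.

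Your term $\mathrm{I}=[(f_a^n)'(v_a)-(f_a^n)'(\mu_0(a))]\,v'(a)$ loses this cancellation: it comes with a factor $v'(a)$, not $x'(a)$. What the Main Distortion Lemma gives you is $|(f_a^n)'(v_a)-(f_a^n)'(\mu_0(a))|\le\vep_0\,|(f_a^n)'(\mu_0(a))|$ with $\vep_0=O(\de')$ \emph{uniformly in $a$}. To conclude $\|\mathrm{I}\|\le\frac{\vep}{2}\,\|(f_a^n)'(\mu_0(a))\,x'(a)\|$ you therefore need $\vep_0\,\|v'(a)\|\le\frac{\vep}{2}\,\|x'(a)\|$ for all relevant $a$. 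But $v'(a)=\partial_a f(c(a),a)$ is generically of order $1$, whereas if the order of contact $K$ of $x$ on $W$ is $\ge 2$, then $\|x'(a)\|\to0$ as $a\to0$ and the ratio $\|x'(a)\|/\|v'(a)\|$ is not bounded below; no fixed $\de'$ (hence $\vep_0$) works. The same defect appears in the second piece of your term $\mathrm{III}$, which also scales like $\de'\,|(f_a^n)'(\mu_0(a))|$ without a compensating $x'(a)$. Your proposal does go through when $K=1$ (i.e.\ $x'(0)\neq0$, the situation guaranteed by Theorem \ref{ttrans} in the normalised case), but you never isolate this hypothesis, and the Lemma as stated — and the paper's proof of it — is formulated so as not to need it. In short, separating out the ``distortion in $(f^n)'$'' term with $v'(a)$ attached, rather than keeping $x(a)$ factored out and differentiating afterward, is not a cosmetic choice; it is what creates an uncontrolled term in your argument.
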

\begin{proof}
First we note that by Lemma \ref{main-dist} we have
\[
\xi_n(a) = x(a) (f_a^n)'(\mu_0(a)) + \mu_n(a) + E_n(a),
\]
where, for instance $|E_n(a)| \leq |\xi_n(a) - \mu_n(a)|/1000$
independently of $n$ and $a$ if $\de'$ is small enough. Put $f_a'(\mu_j(a)) = \la_{a,j}$. Differentiating with respect to $a$ we get
\begin{equation} \label{xip}
\xi_n'(a) = \prod_{j=0}^{n-1} \la_{a,j} \biggr( x'(a) + x(a) \sum_{j=0}^{n-1} \frac{\la_{a,j}'}{\la_{a,j}} + \frac{\mu_n'(a) + E_n'(a)}{\prod_{j=0}^{n-1} \la_{a,j}}  \biggl).
\end{equation}

We claim that only the $x'(a)$ is dominant in (\ref{xip}) if $n$ is large so that $\de'' \leq |\xi_n(a)-\mu_n(a)| \leq \de'$. This means that, by Lemma \ref{main-dist},
\[
(1-\vep_1)\de'' \leq |x(a)| \prod_{j=0}^{n-1} |\la_{a,j}| \leq (1+\vep_1) \de' < 1,
\]
where $\vep_1 > 0$ is arbitrarily small provided $r > 0$ is small enough.
Since $\prod_{j=0}^{n-1} |\la_{a,j}| \geq \la^n$, for some $\la > 1$, taking logarithms and rearranging we get
\begin{equation}
(1-\vep_1) \sum_{j=0}^{n-1} \log |\la_{a,j}| \leq -\log |x(a)| \leq (1+\vep_1) \sum_{j=0}^{n-1} \log |\la_{a,j}|, \label{nsize}
\end{equation}
if $|\log \de''|  \ll |\log |x(a)||$, which is true if the perturbation $r > 0$ is chosen sufficiently small compared to $\de''$.
Since $|\la_{a,j}| \geq \la > 1$, this means that
\[
|x(a)| \sum_{j=0}^{n-1} \frac{\|\la_{a,j}'\|}{|\la_{a,j}|} \leq |x(a)| n C \leq -C |x(a)| \log|x(a)|.
\]
Finally $-|x(a)| \log|x(a)| / \|x'(a)\| \raw 0$ as $a \raw 0$ inside
$W$.
% as can be seen from the series expansion (\ref{x-series}).

Now, $|E_n(a)|$ is uniformly bounded in $W$. Therefore,
$\|E_n'(a)\|$ is also uniformly bounded on compact subsets of
$W$ by Cauchy's Formula. By diminishing $r > 0$ slightly we can assume that
both $|E_n(a)|$ and $\|E_n'(a)\|$ are uniformly bounded on $W$. Hence, the last two
terms in (\ref{xip}) tend to zero as $n \raw \infty$, since also
$|\mu_n'(a)|$ is uniformly bounded. We have proved that
\[
\biggl\| \xi_n'(a) - x'(a) \prod_{j=0}^{n-1} \la_{a,j} \biggr\| \leq \vep \|\xi_n'(a)\|,
\]
if $|\xi_n(a)-\mu_n(a)| \leq \de'$ and $n \geq N$ for some $N$. Choose the perturbation $r$ sufficiently small so that this $N$ is at most the number $n$ in (\ref{nsize}). Since $\la_{a,j} = f_a'(\mu_j(a))$, the proof is finished.
\end{proof}

From this we deduce the following important Proposition (see also Proposition
3.4 in \cite{MA3}):
\begin{Prop} \label{comp-prop}
Let $\vep > 0$. If $\de' > 0$ is sufficiently small, then for every $0 < \de'' < \de'$, there exists $r > 0$ such that the following holds. Let $a \in W$ and assume that $\xi_k(a) \in \NN$, for all $k \leq n$ and $|\xi_n(a) - \mu_n(a)| \geq \de''$. Then
\begin{equation}
\| \xi_n'(a) - (f_a^n)'(v(a)) x'(a) \| \leq \vep \| \xi_n'(a)\|.
\end{equation}
\end{Prop}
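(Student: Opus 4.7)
The plan is to bridge Lemma \ref{first-dist} and this proposition by comparing the two orbit derivatives $(f_a^n)'(\mu_0(a))$ and $(f_a^n)'(v(a))$: these are close because the hyperbolicity of $\La_a$ forces the orbit $\xi_j(a)$ to stay exponentially close to the hyperbolic orbit $\mu_j(a)$ whenever $\xi_j(a) \in \NN$. Apart from this single ratio estimate, the rest is bookkeeping.

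First I would apply Lemma \ref{first-dist} with $\vep$ replaced by $\vep/3$; after shrinking $\de'$ and $r$ accordingly, this yields
$$\|\xi_n'(a) - (f_a^n)'(\mu_0(a))\,x'(a)\| \leq \tfrac{\vep}{3}\,\|\xi_n'(a)\|.$$
Next I would invoke inequality (\ref{close-to-1}) from the proof of Lemma \ref{main-dist}, specialised to $t = a$, which gives
$$\left|\frac{(f_a^n)'(\mu_0(a))}{(f_a^n)'(v(a))} - 1\right| \leq \vep_1,$$
with $\vep_1$ as small as we wish once $\de'$ is small enough. The underlying computation is the standard telescoping argument already carried out in the proof of Lemma \ref{main-dist}: $|\xi_j(a) - \mu_j(a)| \leq \la^{j-n}|\xi_n(a) - \mu_n(a)| \leq \la^{j-n}\de'$, hence $\sum_{j=0}^{n-1}|f_a'(\xi_j(a))-f_a'(\mu_j(a))|/|f_a'(\mu_j(a))| = O(\de')$, and Lemma \ref{prod-dist} converts this into a multiplicative bound on the ratio of the two derivative products.

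Writing $(f_a^n)'(\mu_0(a)) = (1+\eta)(f_a^n)'(v(a))$ with $|\eta| \leq \vep_1$ and combining with the first display via the triangle inequality,
$$\|\xi_n'(a) - (f_a^n)'(v(a))\,x'(a)\| \leq \tfrac{\vep}{3}\,\|\xi_n'(a)\| + \vep_1\,\|(f_a^n)'(v(a))\,x'(a)\|.$$
The first display also gives $\|(f_a^n)'(\mu_0(a))\,x'(a)\| \leq (1+\vep/3)\|\xi_n'(a)\|$, so dividing by $|1+\eta| \geq 1-\vep_1$ bounds $\|(f_a^n)'(v(a))\,x'(a)\|$ by $\tfrac{1+\vep/3}{1-\vep_1}\|\xi_n'(a)\|$. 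Choosing $\vep_1$ small enough compared to $\vep$ (say $\vep_1 \leq \vep/4$, by further shrinking $\de'$) produces the desired bound $\vep\,\|\xi_n'(a)\|$. There is no substantive obstacle: the main care is simply to arrange $\de'$ and $r$ so that both Lemma \ref{first-dist} (applied with threshold $\vep/3$) and the ratio estimate above hold simultaneously for every $n$ with $\xi_n(a)$ satisfying $\de'' \leq |\xi_n(a)-\mu_n(a)| \leq \de'$.
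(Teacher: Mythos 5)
Your proof is correct and matches the route the paper intends: the paper gives no written proof of Proposition \ref{comp-prop}, merely declaring it a deduction from Lemma \ref{first-dist}, and the intended deduction is exactly the one you carry out — replace $(f_a^n)'(\mu_0(a))$ by $(f_a^n)'(v(a))$ using the ratio estimate (\ref{close-to-1}) already established in the first step of the proof of Lemma \ref{main-dist}, then absorb the error via the triangle inequality. Your bookkeeping of $\de'$, $\vep_1$ and the dependence of $r$ is also consistent with the hypotheses as stated.
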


More generally, we have a higher-dimensional form of Proposition 4.3 in \cite{MA}. Put
\[
Q_n = Q_{n,l}(a) = \xi_n'(a) / (f_a^n)'(v_l(a),a).
\]
\begin{Prop}
For every $\de > 0$ and sufficiently small $\de'' > 0$ there is an $r
> 0$ such that the following holds. Assume that the parameter $a \neq
0$, $a \in W$, satisfies $dist(\xi_n(a), Crit(f_a)) \geq \de$ for all $n \leq m$. Moreover, assume that $\de '' \leq |\xi_N(a)-\mu_N(a)| \leq \de'$ and
\begin{align}
|(f^n)'(\xi_N(a),a)| &\geq Ce^{\ga n}, \quad \textrm{for}
\quad n=0,\ldots,m,  \\
|\partial_a f(\xi_n(a),a)| &\leq B, \quad \forall n > 0,
\end{align}
where $\ga \geq (3/4) \log \la$ (see Subsection \ref{const} for definition of $\la$). Then we have, for $n=N,\ldots m$,
\begin{equation}
\|Q_n(a) - Q_N(a) \| \leq \|Q_N(a)\|/1000. \label{d}
\end{equation}
\label{da/dz}
\end{Prop}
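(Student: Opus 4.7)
The plan is to derive the evolution of $Q_n$ from the variational equation, recognize it as a nearly telescoping sum, and then use the Collet--Eckmann lower bound along the orbit to estimate the tail. Differentiating the iteration $\xi_{k+1}(a)=f(\xi_k(a),a)$ gives the affine recursion
\[
\xi_{k+1}'(a) = f_z(\xi_k,a)\,\xi_k'(a) + f_a(\xi_k,a).
\]
Solving this linear recurrence from index $N$ yields
\[
\xi_n'(a) \;=\; (f_a^{\,n-N})'(\xi_N(a))\,\xi_N'(a) \;+\; \sum_{j=N}^{n-1} (f_a^{\,n-1-j})'(\xi_{j+1}(a))\,f_a(\xi_j,a).
\]
Dividing by $(f_a^n)'(v_l(a),a) = (f_a^{n-N})'(\xi_N)\cdot(f_a^N)'(v)\cdot f_z(\xi_n)/f_z(\xi_N)$ and cancelling the common factor gives, after routine rearrangement, an expression for $Q_n-Q_N$ whose principal ingredient is the sum
\[
\sum_{j=N}^{n-1} \frac{f_a(\xi_j,a)}{(f_a^{\,j+1})'(v(a),a)},
\]
modulo uniformly bounded boundary factors of the form $f_z(\xi_k)/f_z(\xi_l)$ (these are bounded above and below since $\dist(\xi_k,\mathrm{Crit}(f_a))\ge\delta$).

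Next I would estimate the sum. Writing $(f^{j+1})'(v) = (f^{j-N+1})'(\xi_N)\cdot(f^N)'(v)$ and invoking the Collet--Eckmann hypothesis $|(f^k)'(\xi_N,a)|\ge Ce^{\gamma k}$ for $k=0,\dots,m$, together with $|f_a(\xi_j,a)|\le B$, I get a geometric series bound
\[
\Bigl|\sum_{j=N}^{n-1} \frac{f_a(\xi_j,a)}{(f^{\,j+1})'(v)}\Bigr| \;\le\; \frac{B}{|(f^N)'(v)|}\sum_{k=0}^{\infty} \frac{1}{Ce^{\gamma k}} \;\le\; \frac{C_1}{|(f^N)'(v)|}.
\]
So $\|Q_n-Q_N\| \leq C_2/|(f^N)'(v)|$ uniformly for $N\le n\le m$.

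It remains to compare this with $\|Q_N\|$. Since the hypothesis $\delta''\le|\xi_N(a)-\mu_N(a)|\le \delta'$ is precisely the setting of Proposition~\ref{comp-prop}, that proposition gives $\|Q_N(a)-x'(a)\|\le\varepsilon\|Q_N(a)\|$, so $\|Q_N\|\asymp \|x'(a)\|$. Moreover Lemma~\ref{main-dist} produces $|(f^N)'(v(a))|\cdot |x(a)|\asymp |\xi_N(a)-\mu_N(a)|\ge \delta''$, hence $|(f^N)'(v)|\ge \delta''/|x(a)|$. Combining with the bounded distortion of $x$ on Whitney subdisks of $W$ (Lemma~\ref{circles}), one has $|x(a)|\le C_3 |a|\,\|x'(a)\|\le C_3\,r\,\|x'(a)\|$. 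Putting everything together,
\[
\frac{\|Q_n-Q_N\|}{\|Q_N\|} \;\le\; \frac{C_2}{|(f^N)'(v)|\,\|Q_N\|} \;\le\; \frac{C_2\,|x(a)|}{\delta''\,\|x'(a)\|} \;\le\; \frac{C_4\,r}{\delta''},
\]
which is $\le 1/1000$ once $r$ is chosen small enough relative to $\delta''$, giving (\ref{d}).

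The main technical obstacle will be handling the boundary correction factors $f_z(\xi_n)/f_z(\xi_N)$ (and analogous ratios) that arise because $Q_n$ is normalised by $(f^n_a)'(v)$, which is not exactly the telescoping denominator of the recursion; these must be absorbed into constants using only the assumption $\dist(\xi_k,\mathrm{Crit}(f_a))\ge\delta$ and the resulting uniform upper/lower bounds on $|f_z|$ along the orbit. A secondary delicate point is ensuring that the Collet--Eckmann constant $\gamma\ge(3/4)\log\lambda$ is actually effective along the orbit under perturbation, which uses the smallness of $r$ and continuity of the relevant expansion rates--these are standard but must be made quantitative so the geometric series bound is truly uniform over $W$.
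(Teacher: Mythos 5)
Your proposal is correct in substance and rests on the same basic identity as the paper's proof, namely the telescoped variational recursion (the paper's display (\ref{summ}) is exactly your sum $\sum_{j}(f_a^{\,n-1-j})'(\xi_{j+1})\,\partial_a f(\xi_j,a)$). Where you genuinely diverge is in how the sum is estimated. The paper first proves, by a separate induction, the exponential lower bound $\|\xi_{N+k}'(a)\|\geq e^{\ga'(N+k)}$ with $\ga'=\min(\log\la/(2K),\ga/2)$, and then bounds the $n$th term of the sum relative to the main term by $Cne^{-\ga'(N+n-1)}$ via a second, inner induction; the smallness comes from $N$ being large. You instead sum a geometric series directly from the Collet--Eckmann hypothesis $|(f^k)'(\xi_N,a)|\geq Ce^{\ga k}$ to get $\|Q_n-Q_N\|\leq C_2/|(f^N)'(v)|$, and only at the end compare with $\|Q_N\|$ using Proposition \ref{comp-prop} ($\|Q_N\|\gtrsim\|x'(a)\|$), Lemma \ref{main-dist} ($|(f^N)'(v)|\gtrsim\de''/|x(a)|$), and the cone estimate $|x(a)|/\|x'(a)\|\to 0$ as $r\to 0$ (the paper uses the equivalent bound $\|x'(a)\|\geq C|x(a)|^{(K-1)/K}$). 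Your route is somewhat more economical: it avoids the double induction and isolates exactly where each hypothesis enters. Both arguments draw the final smallness from the same source, the smallness of $r$.

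One caveat on the point you flag as the ``main technical obstacle.'' With the indexing the paper actually uses in this section ($\xi_k=f^k(v)$, as the recursions (\ref{dz})--(\ref{da}) make explicit), the chain rule gives $(f_a^n)'(v)=(f_a^N)'(v)\,(f_a^{\,n-N})'(\xi_N)$ exactly, so the telescoping identity is $Q_n-Q_N=\sum_{j=N}^{n-1} \partial_a f(\xi_j,a)/(f_a^{\,j+1})'(v)$ with no boundary factors at all. This matters: if a genuinely non-trivial factor $f_z(\xi_N)/f_z(\xi_n)$ did multiply $Q_N$, it could not be ``absorbed into constants,'' since the target estimate requires the coefficient of $Q_N$ to be within $1/1000$ of $1$, not merely bounded. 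So you should fix the indexing rather than plan to absorb that particular factor; once that is done, your argument closes.
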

%The proof can be found in \cite{MA}, Proposition 4.3. However, in many
%variables the proof differs somewhat and we give it.

\begin{proof}
First, we prove by induction, that
\begin{equation}
\|\xi_{N+k}'(a)\| \geq e^{\ga'(N+k)}, \label{expgrowth}
\end{equation}
where $\ga' = \min (\log \la /(2K), \ga/2)$ and $K$ is the order of $x(a)$ in $W$. From (\ref{x-series}) it is straightforward to
show that $$\|x'(a) \| \geq C |x(a)|^{\frac{K-1}{K}},$$ for
some constant $C$. Then we get
\begin{equation}
\|\xi_N'(a)\| \geq (1/2)|(f_a^N)'(v(a))|\|x'(a)\| \geq (C/2) |(f_a^N)'(v(a))| |x(a)|^{\frac{K-1}{K}}.
\end{equation}
By Lemma \ref{main-dist} and since $\de '' \leq |\xi_N(a)-\mu_N(a)| \leq \de'$, we get
\[
|(f_a^N)'(v(a))| |x(a)| \geq \de''/2.
\]
If the perturbation $r > 0$ is chosen sufficiently small, so that $N$ becomes sufficiently large, we deduce that
\[
\|\xi_N'(a)\| \geq (C/2) |(f_a^N)'(v(a))| |x(a)|^{\frac{K-1}{K}} \geq (C \de'' /4) |(f_a^N)'(v(a))|^{1/K} \geq e^{\ga' N},
\]
where $\ga' \geq \min (\log \la /(2K), \ga/2)$.

So, assume that
\[
\|\xi_{N+j}'(a)\| \geq e^{\ga'(N+j)},  \text{ for all $j\leq k$}.
\]
We want to prove that
\[
\|\xi_{N+j}'(a)\| \geq e^{\ga'(N+j)},  \text{ for all $j\leq k+1$}.
\]
First note that the assumption $dist(\xi_n(a),Crit(f_a)) \geq \de$,
with $\de=e^{-\De}$, implies
\begin{equation}
|f'(\xi_j(a),a)| \geq C_1^{-1}e^{-\De K},
\label{bbaa}
\end{equation}
for some $C_1 > 0$. By the Chain Rule we have the recursions
%(remember the notation $\xi_{n}(a) = R^{n}(c(a),a)$)
\begin{align}
\frac{\partial f^{n+1}(v(a),a)}{\partial z} &= \frac{\partial
  f(\xi_n(a),a)}{\partial z} \frac{\partial f^n(v(a),a)}{\partial z}
  \label{dz}, \\
\frac{\partial f^{n+1}(v(a),a)}{\partial a} &= \frac{\partial
  f(\xi_n(a),a)}{\partial z} \frac{\partial f^n(v(a),a)}{\partial a} +
\frac{\partial f(\xi_n(a),a)}{\partial a}. \label{da}
\end{align}
Now, the recursion formulas (\ref{dz}) and (\ref{da}), together
with (\ref{bbaa}) gives
\begin{align}
\|\xi_{N+k+1}'(a)\| &\geq
 |f_a'(\xi_{N+k}(a))|\|\xi_{N+k}'(a)\|\biggl(1-
\frac{\| \partial_a f_a(\xi_{N+k}(a)) \| }
{|f_a'(\xi_{N+k}(a))| \|\xi_{N+k}'(a)\|} \biggr) \nonumber \\
&\geq
|(f_a^{k+1})'(\xi_N(a))|\|\xi_N'(a)\| \prod_{j=0}^k \biggl(1-\frac{\|\partial_a
   f_a(\xi_{N+j}(a)) \|}{|f_a'(\xi_{N+j}(a))|\|\xi_{N+j}'(a)\|} \biggr)
\nonumber \\
&\geq
e^{\ga (k+1)} e^{\ga' N} \prod_{j=0}^k (1-B' e^{K\De}e^{-\ga'(N+j)} ) . \nonumber
\end{align}
We have
\[
\sum_{j=0}^{\infty} B'e^{\De-\ga'(N+j)} < \infty,
\]
and the sum can be made arbitrarily small if $N$ is large enough. Therefore,
\begin{equation}
\|\xi_{N+k+1}'(a)\| \geq e^{(\ga-\ga')(k+1)} e^{\ga' (N + k + 1)}
\prod_{j=0}^k (1-B' e^{K \De-\ga'(N+j)} ) \geq e^{\ga' (N+k+1)},  \nonumber
\end{equation}
if $N$ is large enough, since $\ga' \geq 2K\al$, (here $B' = BC_1$). Hence (\ref{expgrowth}) follows.

To continue the proof, first note that
\begin{align}
\| \xi_{N+k}'(a) - \xi_N'(a) &\prod_{j=N}^{N+k} f_a'(\xi_j(a)) \| =
\| \sum_{j=N}^{N+k} \partial_a f_a(\xi_j(a)) \prod_{i=j}^{N+k} f_a'(\xi_{i+1}(a)) \| \nonumber \\
&\leq
\sum_{j=N}^{N+k} \| \partial_a f_a(\xi_j(a)) \| \prod_{i=j}^{N+k} | f_a'(\xi_{i+1}(a))|. \label{summ}
\end{align}
Let us put $\la_n = f_a'(\xi_n)$, $\mu_n = \partial_a f_a (\xi_n(a))$ and $\xi_n'(a) = \xi_n'$.
The first term in (\ref{summ}) is
\[
\| \mu_N \| \prod_{i=N+1}^{N+k-1} |\la_i| \leq \frac{\| \mu_N \|}{|\la_N| \| \xi_N'\|} \prod_{i=N}^{N+k-1} |\la_i| \| \xi_N'\| \leq B' e^{\De K} e^{-\ga'N} \prod_{i=N}^{N+k-1} |\la_i| \| \xi_N' \|.
\]
Put $C=B'e^{\De K}$. The $nth$ term in (\ref{summ}) becomes
\begin{align}
\| \mu_{N+n} \| &\prod_{i=N+n}^{N+k-1} |\la_i| = \frac{\| \mu_{N+n}  \|}{|\la_{N+n-1}| \| \xi_{N+n-1}' \|} \prod_{i=N+n-1}^{N+k-1}|\la_i| \| \xi_{N+n-1}' \| \nonumber \\
&\leq
Ce^{-\ga' (N+n-1)} \prod_{i=N+n-1}^{N+k-1} |\la_i|\biggl( \prod_{j=N}^{N+n-2} |\la_i| \| \xi_N' \| + \sum_{j=N}^{N+n-2} \| \mu_j \| \prod_{i=j+1}^{N+n-2} |\la_i| \biggr) \nonumber \\
&\leq
Ce^{-\ga' (N+n-1)} \prod_{i=N}^{N+k-1} |\la_i| \| \xi_N \| + Ce^{-\ga' (N+n-1)} \sum_{j=N}^{N+n-2}
\| \mu_j \| \prod_{l=j+1}^{N+k-2} |\la_l|. \nonumber
 \end{align}
The last sum is the sum of the first $n-1$ terms in (\ref{summ}). As induction assumption the first $n-1$ terms in (\ref{summ}) is less than $1$. This means that the $n$th term is at most $Cne^{-\ga' (N+n-1)}$, which is again of course less than $1$ if $N$ is chosen sufficiently big. We get finally
\[
\| \xi_{N+k}'(a) - \xi_N'(a) \prod_{j=N}^{N+k} f_a'(\xi_j(a)) \| \leq \sum_{n=N}^{\infty} Cne^{-\ga' (N+n-1)} \| \xi_N'(a) \| \prod_{j=N}^{N+k} |f_a'(\xi_j(a))|.
\]
So, if $N$ is big enough,
\begin{equation}
\|Q_{N+n}(a)-Q_N(a)\| \leq  \|Q_N(a)\|/1000. \nonumber
\end{equation}
\end{proof}

From Proposition \ref{da/dz} and \ref{comp-prop} we see that the space
derivative and parameter derivative are comparable up to a
multiplicative quantity, namely $x'(a)=x_j'(a)$ for some $j$. However,
since $x'(a)$ generally is not constant, we want to restrict the
parameters such that $x'(a)$ does not vary much. To this end it is
naturally to restrict to sets where (\ref{xprim}) holds.

If $x'(t)$ is not
constant, then by Lemma \ref{circles} the set
of parameters which satisfies the condition (\ref{xprim}) contains a
$k$-Whitney disk $D_0
\subset W$ for some $0 < k < 1$ only depending on the function $x$.
%The dimension of $D_0$ is assumed to be $\leq 2d-2$.
%Recall that the set $I$ of indices $j$ are those for which $x_j$ is
%transversal in $\B(0,r)$.

We need to know that a Whitney (parametric) disk grows to the large
scale before it leaves $\NN$. This is the content of the following
lemma. Let the angle between vectors $x$ and $y$ be $\arccos (x \cdot y /(\|x\|
\|y\|)) \in [0,\pi ]$. Given two hyper planes $H_1$ and $H_2$ we say that the angle $\theta \in [0,\pi]$
between them is defined by
\[
\cos \th = \inf\limits_{\bar{x} \in H_1} \sup\limits_{\bar{y} \in H_2} \frac{ \bar{x} \cdot \bar{y} }
{\| \bar{x} \| \| \bar{y} \|}.
\]
Then we can also talk about angles between Whitney disks and hyper planes since every Whitney disk
is contained in some unique hyper plane (with dimension equal to the
dikension of the Whitney disk). Moreover, if $D_0$ is a Whitney disk and $H$ a hyperplane, then
the diameter of the orthogonal projection of $D_0$ onto $H$ is $diam(D_0) \cos(\th)$, where $\th$ is the angle
between $D_0$ and $H$.
\begin{Lem} \label{initdist}
If $r > 0$ is sufficiently small, there exists a number $0 <k <1$ only
depending on the function $x$ such that the following
holds. Let $D_0=\B(a_0,r_0) \subset W$ be a $k$-Whitney disk ($D_0$ has
dimension $\leq 2d-2$) such that the angle $\th$ between $D_0$ and $x'(a_0)$ is $\th \neq \pi/2$:
There is an $n > 0$ and a number
$S=S(\de',\th)$, such that the set $\xi_{n}(D_0) \subset \NN$ and has diameter at
least $S$. Moreover, we have low argument distortion, i.e.
\begin{equation}
\| \xi_k'(a) - \xi_k'(b) \| \leq \| \xi_k'(a) \|/100,
\label{xinn}
\end{equation}
for all $a,b \in D_0$ and all $k \leq n$.
\end{Lem}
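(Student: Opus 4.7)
The plan is to take $n$ to be the largest integer with $\xi_j(D_0)\subset\NN$ for all $j\le n$ and to show that at this terminal time the diameter of $\xi_n(D_0)$ is bounded below by a positive quantity $S=S(\de',\theta)$. Two tools drive the argument: the hyperbolicity of $\La_a$, which together with Lemma~\ref{main-dist} forces $|(f_a^n)'(v(a))|$ to grow exponentially for as long as the orbit stays in $\NN$; and Proposition~\ref{comp-prop}, which, once $|\xi_n(a)-\mu_n(a)|\ge\de''$, represents $\xi_n'(a)$ as essentially the scalar $(f_a^n)'(v(a))$ times the covector $x'(a)$ with error bounded by $\vep\|\xi_n'(a)\|$.

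I would then linearise $\xi_n|_{D_0}$ at $a_0$. The image of the ball of radius $r_0$ in the hyperplane $H\supset D_0$ under the linear map $(f_{a_0}^n)'(v(a_0))\,x'(a_0)|_H$ is a disk in $\C$ of diameter approximately $2r_0\,|(f_{a_0}^n)'(v(a_0))|\,\|x'(a_0)|_H\|$, and by the angle convention one has $\|x'(a_0)|_H\|\ge\|x'(a_0)\|\cos\theta$. Next I exploit the power series \eref{x-series}: if $K$ is the order of $x$ in $W$ as in \eref{onevarexp}, then $|x(a_0)|\le C_1|a_0|^K$ and $\|x'(a_0)\|\ge C_2|a_0|^{K-1}$; combined with the Whitney property $r_0\ge c|a_0|$ this yields $\|x'(a_0)\|\,r_0\ge C|x(a_0)|$. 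At the terminal step the orbit has just reached $\partial\NN$, so Lemma~\ref{main-dist} supplies $|(f_{a_0}^n)'(v(a_0))|\,|x(a_0)|\asymp\de'$, and multiplying the inequalities produces $\operatorname{diam}(\xi_n(D_0))\ge c'\de'\cos\theta=:S$.

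For the low argument distortion \eref{xinn}, for indices $k$ with $|\xi_k(a)-\mu_k(a)|\ge\de''$ I would combine Proposition~\ref{comp-prop} (both $\xi_k'(a)$ and $\xi_k'(b)$ are close to scalar multiples of $x'(a)$ and $x'(b)$) with Lemma~\ref{circles} (bounded distortion of $x'$ on $k$-Whitney disks in $W$) and Lemma~\ref{main-dist} (the scalar factors $(f_a^k)'(v(a))/(f_b^k)'(v(b))$ have ratio close to $1$). For the initial range of $k$, when $\xi_k(a)$ is still within $\de''$ of $\mu_k(a)$, the displacement is controlled by $\de''$ and $\mu_k'$ is uniformly bounded on $D_0$ via Cauchy's formula applied to the holomorphic motion, from which the Lipschitz control on $\xi_k'$ follows by a direct estimate.

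The main obstacle I foresee is the bookkeeping at the terminal step: maximality of $n$ with $\xi_n(D_0)\subset\NN$ does not by itself guarantee $|\xi_n(a)-\mu_n(a)|\ge\de''$, which is what unlocks Proposition~\ref{comp-prop} and hence the diameter estimate. I would resolve this by declaring $\NN$ from the outset to be a slightly enlarged $\de'$-neighbourhood of $\La$ and shrinking $r$ so that one additional iteration, which inflates diameters by at most $\sup_{\NN\times\B(0,r)}|f_a'|$, still leaves the orbit inside the ambient neighbourhood where Proposition~\ref{comp-prop} applies; this produces a clean terminal step where both the diameter lower bound and the distortion bound can be extracted simultaneously.
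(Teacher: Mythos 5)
Your proposal follows essentially the same route as the paper's proof: use Proposition~\ref{comp-prop} to write $\xi_n'(a)$ as roughly $(f_a^n)'(v_a)\,x'(a)$, estimate $\operatorname{diam}(\xi_n(D_0))$ by projecting $x'(a_0)$ onto the hyperplane of $D_0$ (giving the factor $\cos\theta$), feed in the Whitney bound $r_0\|x'(a_0)\|\gtrsim|x(a_0)|$ (which the paper records as boundedness below of $\|a_0\|\,\|x'(a_0)\|/|x(a_0)|$ on $W$, and which you rederive from \eref{x-series}), and peg $|(f_{a_0}^n)'(v_{a_0})|\,|x(a_0)|\asymp\de'$ at the terminal step via Lemma~\ref{main-dist}; the distortion bound \eref{xinn} then comes from Proposition~\ref{comp-prop} together with Lemma~\ref{circles} and Lemma~\ref{main-dist}, again as in the paper. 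The one genuine divergence is how the terminal time $n$ is fixed. You take $n$ maximal with $\xi_j(D_0)\subset\NN$ for all $j\leq n$ and then patch the gap you correctly foresee --- that this maximality does not by itself yield the lower bound $|\xi_n(a)-\mu_n(a)|\geq\de''$ needed to unlock Proposition~\ref{comp-prop} --- by enlarging $\NN$ and shrinking $r$. The paper instead chooses $n$ directly as the last time the center's displacement $|\xi_n(a_0)-\mu_n(a_0)|$ lies in the window $[(\de'+\de'')/(2M_0),\,(\de'+\de'')/2]$, with $M_0=\sup|f_a'|$, which at once supplies the required lower bound and keeps the orbit in $\NN$, and then checks (taking $k_0\leq 1/2$ and $\de''$ small) that the whole image $\xi_n(D_0)$ lands in the annulus $A(\de'',\de',\mu_n(a_0))$. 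The paper's sandwich condition is a more economical way to settle the same point your buffer argument addresses, but your route is sound. You are in fact slightly more thorough than the paper in one respect: you attempt to address \eref{xinn} for early indices $k$ with $|\xi_k(a)-\mu_k(a)|<\de''$, whereas the paper cites only the terminal-time estimate \eref{xiprim}; the argument you sketch there, however, would need to be tightened, since in that range $\xi_k'$ is dominated by $\mu_k'$ rather than by the $x'(a)$ term to which Lemma~\ref{circles} applies.
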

\begin{proof}
Choose $n$ maximal such that $\xi_k(a_0) \in \NN$ for all $k \leq n$ and
\[
(\de'+\de'')/(2M_0) \leq |\xi_n(a_0)-\mu_n(a_0)| \leq (\de'+\de'')/2,
\]
where $M_0$ is the supremum of $|f_a'(z)|$ over all $a \in \B(0,r)$ and $z \in \hat{\C}$.

Proposition \ref{comp-prop} holds for all $a \in W$ satisfying
\begin{equation} \label{debis}
\de'' \leq |\xi_n(a)-\mu_n(a)| \leq \de'.
\end{equation}
Since $x'(a)$ has bounded distortion on Whitney disks by Lemma
\ref{circles}, for parameters $a,b \in D_0$ satisfying (\ref{debis}) we have good control of the geometry:
\begin{equation} \label{xiprim}
\| \xi_n'(a) - \xi_n'(b) \| \leq \|\xi_n'(a)\| /100.
\end{equation}
Hence, $\xi_n$ is almost linear in $D_0=\B(a_0,r_0)$ if (\ref{debis}) is satisfied.
%where $r_0/|a_0|=k_0$ only depends on the function $x(a)$ as long as
%(\ref{debis}) holds.

Assuming that (\ref{debis}) holds for all $a \in D_0$, we want to estimate the diameter $d$ of the
set $\xi_n(D_0)$. It is, up to very low distortion, precisely the orthogonal projection
of $\xi_n'(a_0)$ onto the hyperplane where $D_0$ lies.
The diameter $d$ can be estimated by
%Using Proposition \ref{comp-prop}, since the angle between the Whitney disk $D_0$ and
%$x'(a_0)$ is $\th \neq \pi/2$, the diameter
\begin{equation}
d \geq
\| \xi_n'(a_0) \| | diam(D_0) \cos (\th) | \geq (1/2)
|(f_{a_0}^n)'(\mu_0(a_0))| \|x'(a_0)\| \| k_0a_0 \| \cos (\th),
%\nonumber \\
%&\geq (1/2) |(R_{a_0}^n)'(\mu_0(a_0))||a_0|^{k}k_0k.
\nonumber
\end{equation}
where we also used Proposition \ref{comp-prop}.
%where $C_v$ is a constant and depends only on $\th$.
If the Whitney disk $D_0$ is too large such that (\ref{debis}) is not
fullfilled, then $\xi_n(D_0)$ fails to be a subset of
\[
A(\de'',\de',\mu_n(a_0)) = \{ z: \de'' \leq |z-\mu_n(a_0)| \leq
\de' \}
\]
and we may have to diminish $r_0$. However, with $r_0=k_0 \| a_0 \|$
we can choose $\de'' > 0$ sufficiently small so that at least if $k_0
\leq 1/2$, then $\xi_n(D_0) \subset A(\de'',\de',\mu_n(a_0))$.
%In this case,
%$k_0 \leq 1/2$ will only depend on the function $x(a)$.

By Lemma \ref{main-dist},
\begin{align}
(\de'+\de'')/(2M_0) &\leq |\xi_n(a_0)-\mu_n(a_0)| \leq 2
|(f_{a_0}^n)'(\mu_0(a_0))||x(a_0)|. \nonumber
\end{align}
Thus,
\begin{equation}
%\frac{L}{\rho/(8\La^3)}
\frac{d}{\de'} \geq C
\frac{|(f_{a_0}^n)'(\mu_0(a_0))|}{|(f_{a_0}^n)'(\mu_0(a_0))|}
\frac{\|k_0 a_0\| \| x'(a_0) \|}{|x(a_0)|}.\nonumber
\end{equation}
The number $\|k_0 a_0\| \|x'(a_0) \|
/ |x(a_0)|$ is bounded from below in $W$. Hence
there is some constant $C'$ so that $d/\de' \geq C'$.
So the diameter $d$ of the set $\xi_n(D_0)$ is greater than some $S =
S(\de', \th)$.
Also, by (\ref{xiprim}), we have bounded argument distortion for all $a,b \in D_0$.
\end{proof}

Hence, a Whitney disk $D_0 \subset W$ will grow to size $S$ under
the map $\xi_n$ before $\xi_n(D_0)$ leaves $\NN$. At the same time we
have strong control over the distortion up to the scale $S$. Let us formalize and say that we have {\em strong distortion estimates in $D_0$ up to time $n$} if
\begin{equation}
\| \xi_k'(a) - \xi_k(b) \| \leq \| \xi_k'(a) \| /100,
\label{xinn2}
\end{equation}
holds for all $a,b \in D_0$ and for all $k \leq n$. If it is clear
what $n$ is, we just say strong distortion estimates in $D_0$.

%%%%%%%%%%%%%%%%%%%%%%%%%%%%%%%%%%%%%%%%%%%%%%%%%%%%%%
\comm{
Similarly, if we start with a whole $1$-dimensional disk $B(0,r)
\subset W$ in which the corresponding $x(a)$ (for some critical point) has finite order contact, it also grows to the large scale before leaving $\NN$. The following follows from Lemma 3.5 in \cite{MA3}.

\begin{Lem} \label{X}
There exist numbers $R > 0$, $S > 0$ only depending on $f=f_0$ such that for any $0 < r \leq R$ the
following holds. Assume that $x(a)$ has finite order contact in $B(0,r)$. Let $n$ be maximal for which $diam (\xi_n(B(0,r)))
\leq S$. Then $\xi_n(B(0,r)) \subset \NN$ and for every Whitney disk $D_0 \subset B(0,r)$, we have
strong distortion estimates.

Moreover, there is some $\vep > 0$ depending on $\de'$, such that
$\xi_n(B(0,r))$ is almost round, meaning that there are two balls
$B_1$ and $B_2$ such that $B_1 \subset \xi_n(B(0,r)) \subset B_2$ and
such that $\mod(B_2 \sm B_1) < \vep$. The number $\vep > 0$ can be
chosen arbitrarily small if $\de'$ is small enough.
\end{Lem}
}
%%%%%%%%%%%%%%%%%%%%%%%%%%%%%%%%%%%%%%%%%%%%%%%%%%

Finally we will use the following distortion lemma for the so called
free period, i.e. when $\xi_n(E)$ has left $\NN$, for some set
$E$. The following follows directly.
\begin{Lem}[Extended Distortion Lemma]\label{extend-dist}
Let $N \in \N$. For any $\vep > 0$ and neighbourhood $U$ of $Crit(f_0)$, there exists an $r > 0$ and $S' > 0$ such that the following holds. Let $a,b \in \B(0,r)$ and assume that $z,w \in \NN$ are such that $f^k(z,a), f^k(w,b) \notin U$ and $|f^k(z,a)-f^k(w,b)| \leq S'$ for all $k=0, \ldots,n$, where $n \leq N$.
Then
\[
\biggl| \frac{(f^n)'(z,a)}{(f^n)'(w,b)} -1 \biggr| < \vep.
\]
\end{Lem}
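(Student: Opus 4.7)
The plan is to exploit that $n\leq N$ is uniformly bounded, so we only ever multiply at most $N$ ratios, and a simple uniform continuity / Lipschitz argument for $f'$ away from the critical set suffices. First I would use Lemma~\ref{prod-dist} (the product estimate) to reduce the target inequality to bounding
\[
\sum_{k=0}^{n-1}\biggl|\frac{f'(f^k(z,a),a)}{f'(f^k(w,b),b)}-1\biggr|,
\]
which, since there are at most $N$ terms, in turn follows from an estimate of each individual factor by a quantity that can be made arbitrarily small.

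For each individual factor I would write
\[
\biggl|\frac{f'(f^k(z,a),a)}{f'(f^k(w,b),b)}-1\biggr|
\leq \frac{|f'(f^k(z,a),a)-f'(f^k(w,b),b)|}{|f'(f^k(w,b),b)|}.
\]
The denominator is bounded below by a positive constant $\mu=\mu(U)>0$, since $f^k(w,b)\notin U$ and $f_0'$ vanishes precisely on $Crit(f_0)\subset U$; provided $r>0$ is small, uniform continuity of $(z,a)\mapsto f'(z,a)$ gives the same lower bound for all $a\in\B(0,r)$. For the numerator I would use that $f'(z,a)$ is jointly smooth on the compact set $\hat\C\setminus U$ times $\oli{\B(0,r)}$, hence Lipschitz: there exists $L=L(U)$ with
\[
|f'(z_1,a_1)-f'(z_2,a_2)|\leq L\bigl(|z_1-z_2|+|a_1-a_2|\bigr).
\]
Applied to our points this yields a bound of $L(S'+2r)/\mu$ per factor.

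Putting it together, the sum in Lemma~\ref{prod-dist} is at most $NL(S'+2r)/\mu$. Choosing $S'$ and $r$ small enough that this does not exceed $\log(1+\vep)$ gives the required inequality, because then $\exp(NL(S'+2r)/\mu)-1\leq\vep$. This is the whole argument; no hard step is involved, the only point being that $n$ stays uniformly bounded by $N$, so the errors do not accumulate catastrophically and one does not need any hyperbolicity or exponential growth input (contrast with Lemma~\ref{main-dist}, where $n$ was unbounded and one had to exploit contraction towards the hyperbolic set $\La$). The mildest care is required in making sure the Lipschitz constant $L$ is independent of $a\in\B(0,r)$; this is automatic from compactness of $\hat\C\setminus U$ and joint smoothness of $(z,a)\mapsto f(z,a)$ together with the fact that shrinking $r$ keeps the relevant derivatives uniformly controlled.
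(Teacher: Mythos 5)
The paper gives no proof of this lemma (it is introduced with ``The following follows directly''), and your argument is exactly the elementary one that justifies that remark: since $n\leq N$, Lemma~\ref{prod-dist} reduces the estimate to $N$ individual factors, each controlled by the uniform lower bound on $|f'|$ off $U$ and the Lipschitz constant of $(z,a)\mapsto f'(z,a)$ on the compact set $(\hat\C\setminus U)\times\oli{\B(0,r)}$. Your proof is correct and matches the intended approach.
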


The bound $N$ will come from the following lemma.

\begin{Lem} \label{return}
There exists an $r > 0$ such that the following holds. Fix $d > 0$ and let $\SS$ be a family of disks with diameter $d$ which cover the Julia set $J(f)$ of the starting function $f_0$ and such that each disk $S \in \SS$ is centered at a point in $J(f)$. Then there exists some constant $N$ such that
\begin{equation}
\inf \{ m : f_0^m(S) \supset \oli{U} \} \leq N, \label{cover}
\end{equation}
for every disk $S \in \SS$.
\end{Lem}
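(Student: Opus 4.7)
The plan is to first establish a pointwise covering statement---that for each centre $z \in \hat{\C}$ some iterate $f_0^m(B(z, d/2))$ already contains $\oli{U}$---and then upgrade it to a uniform bound by compactness of $\hat{\C}$.

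For the pointwise step I would exploit that $J(f_0) = \hat{\C}$ forces the exceptional set $E(f_0)$ to be empty: exceptional points of a rational map of degree $\geq 2$ lie in super-attracting cycles, and there are no such cycles when the Fatou set is empty. Fix $z \in \hat{\C}$. Repelling periodic points are dense in $J(f_0) = \hat{\C}$, so I can pick a repelling periodic point $p$ of some period $q$ inside $B(z, d/4)$. Since $f_0^q$ is locally expanding at $p$, for all sufficiently small $\rho > 0$ we have $B(p, \rho) \sbs f_0^q(B(p, \rho))$, and by induction the sets
\[
A_n := f_0^{qn}(B(p, \rho))
\]
form a nested increasing sequence of open sets. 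The classical blowing-up theorem applied to the rational map $f_0^q$ at the Julia point $p$ (see e.g.\ Milnor, \emph{Dynamics in One Complex Variable}, Cor.~4.12, or Beardon, \emph{Iteration of Rational Functions}, Thm.~4.2.5) gives $\bigcup_n A_n \sps \hat{\C} \sm E(f_0^q) = \hat{\C}$. Because the $A_n$ are nested and exhaust the compact space $\hat{\C}$, some single term must already equal $\hat{\C}$: there is $N_0$ with $A_{N_0} = \hat{\C}$. Setting $m(z) = qN_0$, I conclude
\[
f_0^{m(z)}(B(z, d/4)) \sps A_{N_0} = \hat{\C} \sps \oli{U}.
\]

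For uniformity in $z$, I note that if $|z' - z| < d/4$ then $B(z, d/4) \sbs B(z', d/2)$, hence $f_0^{m(z)}(B(z', d/2)) \sps \oli{U}$ as well. Covering $\hat{\C}$ by finitely many discs $B(z_i, d/4)$, $i = 1, \ldots, \ell$, and setting $N = \max_i m(z_i)$, any $S \in \SS$ is of the form $B(z', d/2)$ for some $z' \in J(f) = \hat{\C}$; that $z'$ lies in some $B(z_i, d/4)$, so $B(z_i, d/4) \sbs S$, and therefore $f_0^{m(z_i)}(S) \sps \oli{U}$ with $m(z_i) \leq N$, giving $\inf\{m : f_0^m(S) \sps \oli{U}\} \leq N$.

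The main obstacle is the upgrade from ``union covers'' to ``single iterate covers'' in the first step. Applying Montel's theorem directly yields only $\bigcup_n f_0^n(V) \sps \hat{\C} \sm E(f_0)$ for a neighbourhood $V$ of a Julia point, and a naive compactness argument on that open cover produces a finite union of iterates containing $\oli{U}$, not a single one. The trick of running the monotone expansion around a repelling periodic point---combined with the fact that $E(f_0) = \emptyset$ when $J(f_0) = \hat{\C}$---is what converts union covering into covering by a single iterate, and this is precisely what permits extracting a uniform bound $N$.
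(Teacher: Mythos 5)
Your proof is correct, and it follows the same two-step skeleton as the paper's (a pointwise covering time for each centre, then a uniform bound by compactness of the sphere), but the way you handle the pointwise step is genuinely different and, in fact, more complete. The paper simply asserts that non-normality of $\{f^n\}$ on $J(f)$ yields, for each $z$, a single iterate $N(z)$ with $f^{N(z)}(S)\supset \oli{U}$; as you correctly point out, Montel/blow-up only gives that the \emph{union} of the forward images covers $\hat{\C}\sm E(f_0)$, and the passage to a single iterate is left implicit. Your device --- anchoring at a repelling periodic point $p\in B(z,d/4)$, producing a nested increasing sequence $A_n=f_0^{qn}(B(p,\rho))$, and using that the $A_n$ exhaust the compact sphere (here $E(f_0)=\emptyset$ because $J(f_0)=\hat{\C}$) so that some single $A_{N_0}=\hat{\C}$ --- supplies exactly the missing justification. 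Your uniformization step is also slightly different: the paper appeals to local constancy of $N(z)$, while you use monotonicity of the covering property under enlarging the disk ($B(z_i,d/4)\subset B(z',d/2)$ whenever $z'\in B(z_i,d/4)$), which is cleaner and avoids any semicontinuity discussion. The only caveat is that your argument leans on the standing assumption $J(f_0)=\hat{\C}$ (density of repelling cycles in the whole sphere and emptiness of the exceptional set); this is precisely the setting of this part of the paper, so it is not a restriction here, though a version for $J(f)\neq\hat{\C}$ would instead conclude $\oli{U}\subset A_{N_0}$ for some $N_0$ from the nested exhaustion of the compact set $\oli{U}\subset\hat{\C}\sm E(f_0)$.
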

\begin{proof}
Since $f^n$ is not normal on the Julia set, for each point $z \in
J(f)$ there is some (smallest) $N(z)$ for which $f^{N(z)}(S) \supset
\oli{U}$. For each $z$ there is some neighbourhood for which $N(z)$ is
constant. Since $J(f)$ is compact there is a constant $N$ such that
\[
\inf \{ m : f^m(S) \supset \oli{U} \} \leq N,
\]
for any $S \in \SS$. The lemma follows.
\end{proof}
Arrange the disks in the family $\SS$ so that any disk $D$ of diameter $d$ for which there exists a point $z \in D \cap J(f) \neq \emptyset$ such that $dist(z,\partial D) \geq d/4$, there exists some $S \in \SS$ such that $S \subset D$.

\section{Closing the critical orbits} \label{Closing}

Although we have shown that we have strong distortion estimates on
small Whitney disks, we start with a full-dimensional disk
$\B_0 \subset W$. By Lemma \ref{initdist},
$\xi_n(\B_0)$ grows to some large scale
size $S=S_0$ under strong distortion estimates, for some $n > 0$.
% (recall that $\B(0,r)$ is the $(2d-2)$-dimensional disk
%in the full parameter space centered at $a=0$ with radius $r$).

%%%%%%%%%%%%%%%%%%%%%%%%%%%%%%%%%%%%%%%%%%%%%%%%%%%%%%%
\comm{
We begin with proving the first closing lemma on the disk
$B(0,r)$. The result is just a consequence of non normality.

\begin{Lem}[First connecting lemma] \label{firstclose}
For any $j$ for which $c_j(a)$ has finite order contact, the set
$\xi_n(B)$ reaches some large scale $S > 0$ for some $n > 0$. Moreover, for
some $m \leq N$ there
is a solution to $\xi_{n+m,j}(a)-c_i(a)=0$ in $B(0,r)$, for any
$i$. The number $N$ only depends on the large scale and the function
$f=f_0$.
\end{Lem}

\begin{proof}
By Lemma \ref{X}, for any $r > 0$ sufficiently small the set $\xi_n(B(0,r))$ will grow to
the large scale $S$ before leaving $\NN$ and $\xi_n(B(0,r))$ contains a
disk of half the radius centered in $\xi_n(0)$. Since $f_0^n$ is not
normal on the Julia set we get that after a finite number of iterates
$m \leq N$, where $N$ only depends on the large scale, we have that
$f_0(\xi_n(B)) \supset \oli{U}$. Since the parameter dependence under
these $m$ last iterates can be made arbiltrarily small if $r > 0$ is
small enough, we get that also $\xi_{n+m}(B) \supset \oli{U}$. Hence there is a solution to
$\xi_{n+m}(a)-c_i(a)=0$ in $B(0,r)$.
%We fix the critical point $c_i(a)$ by composing with a
%M\"obiustrasformation $M_a$ such that $M_a$ maps $\xi_{0,i}(a)=c_i(a),\xi_{1,i}(a)=f_a(c_i(a)),
%\xi_{2,i}(a)=f_a^2(c_i(a))$ onto $\xi_{0,i}(0),\xi_{1,i}(0),\xi_{2,i}(0)$ in
%order. Put $g_n = M_a \circ \xi_{n,j}$. Since $\xi_{n,j}$ is not normal, $g_n$
%is not normal either. Hence $g_n(B(0,r))$ cannot omit $\{ 1,2,3
%\}$. Hence for some $a$, $g_n(a)=1$, which is the same as
%$\xi_{n,j}(a)=c_i(a)$.
\end{proof}
}
%%%%%%%%%%%%%%%%%%%%%%%%%%%%%%%%%%%%%%%%%%%%%%%%%%%
Assume that $x_1(a)$ has finite order contact and assume that we have found a solution $\xi_{n+m,1}(a_0)=c_{1}(a_0)$ for some $a_0 \in \B_0$.
%Choose $i=1$ in the
%above lemma.
Let $\B_1'$ be the connected component of the set $\{a \in
W: \xi_{n+m,1}(a)=c_{1}(a) \}$ containing $a_0$. In order to get good geometry control of this manifold, we need to restrict to a set $\B_1 = D\cap \B_1'$, where $D$ is a Whitney disk.

A proof of the following general result can be found in \cite{JM} p. 11, for instance.
\begin{Lem} \label{Milnor}
Given an analytic function $F$ from $\C^n$ to $\C$, where
$F(z_0)=w_0$. Then a relatively open subset $E \subset F^{-1}(w_0)$ is a submanifold if for all $z \in E$ we have $F'(z) \neq 0$.
\end{Lem}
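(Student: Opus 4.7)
The plan is to apply the holomorphic implicit function theorem at each point of $E$ and then patch the resulting local graph parameterizations. Since $F$ takes values in $\C$, the hypothesis $F'(z) \neq 0$ for $z \in E$ is precisely the statement that the $\C$-linear map $F'(z): \C^n \raw \C$ is surjective, which is equivalent to saying that at least one partial derivative $\partial F/\partial z_i(z)$ is nonzero.

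The main step is pointwise. Fix $z_0 \in E$ and, after relabeling coordinates, assume $\partial F/\partial z_n(z_0) \neq 0$. The holomorphic implicit function theorem then supplies a polydisk neighborhood $V \times W$ of $z_0$, with $V \subset \C^{n-1}$ and $W \subset \C$, together with a holomorphic function $\phi: V \raw W$ such that for $(z_1,\ldots,z_n) \in V \times W$ we have $F(z_1,\ldots,z_n) = w_0$ if and only if $z_n = \phi(z_1,\ldots,z_{n-1})$. Thus $F^{-1}(w_0) \cap (V \times W)$ is the graph of $\phi$, which is a complex submanifold of $\C^n$ of codimension one, and the projection onto $V$ supplies a holomorphic chart.

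To finish, one uses that $E$ is relatively open in $F^{-1}(w_0)$: by shrinking the polydisk $V \times W$ if necessary, we can arrange $F^{-1}(w_0) \cap (V \times W) \subset E$, so the above graph chart is in fact a chart for $E$ near $z_0$. Since $z_0 \in E$ was arbitrary, such graph charts cover $E$. Any two overlapping charts are restrictions of the single inclusion $E \hookrightarrow \C^n$, so their transition maps are automatically biholomorphic; hence the charts endow $E$ with the structure of a complex submanifold of $\C^n$ of pure dimension $n-1$.

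I do not expect any genuine obstacle here. The entire content of the lemma is a reformulation of the holomorphic implicit function theorem, and the only non-trivial verification is that the rank condition needed by that theorem is exactly the hypothesis $F'(z) \neq 0$, which is immediate for a $\C$-valued map. The relative openness of $E$ inside the level set is used only to guarantee that the graph neighborhoods can be taken inside $E$.
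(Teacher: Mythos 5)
Your proof is correct. The paper does not actually supply a proof of this lemma; it simply cites Milnor's book (\cite{JM}, p.~11) for the result. Your argument via the holomorphic implicit function theorem — observing that $F'(z)\neq 0$ for a $\C$-valued map is exactly surjectivity of the differential, obtaining local graph charts, and using the relative openness of $E$ to shrink each chart into $E$ — is the standard proof one would find in that reference, so there is no divergence to report.
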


Hence, set of parameters $a \in W$ satisfying
$F_{1}(a)=\xi_{n+m,1}(a)-c_{1}(a)=0$ is a submanifold, apart from
a set of singularities. In the next lemma we deal the problem of singularities.
%Let us
%state the following lemma in a more general setting.
\begin{Lem} \label{nosing}
Assume that $A \subset W$ is a connected manifold, such that $\xi_{k,l}(A) \in \NN$, for all $k \leq n$. Assume that $F_l(a) =
\xi_{m+n,l}(a)-c_{l}(a)=0$ for some $a \in A$ and that
%$\xi_{n+m,h_l}(A)$ contains a $\de_l$-neighbourhood of $c_{j_{h_l}}(0)$ and such that
$\xi_{k,l}(a) \cap U_l = \emptyset$, for all $k \leq n+m-1$, where
$U_l$ is a $\de_l$-neighbourhood of $Crit(f)$, $\de_l \leq
\de$. Assume moreover that $U_l$ has the property that the first
return time into itself is at least $2m$ and that every $c_j(a)$
belongs to a $\de_l^{10}$-neighbourhood of $Crit(f)$, for $a \in
\B(0,r)$.

Then if $ r > 0$ is sufficiently small, then $\|\xi_{n+m,l}'(a)\| > 100
\|c_l'(a)\|$ for all zeros of $F_l$ inside $A$. In particular,
there are no singularities of $F_l$ on its set of zeros inside
$A$.
\end{Lem}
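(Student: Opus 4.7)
The plan is to obtain a lower bound on $\|\xi_{n+m,l}'(a)\|$ at any zero $a^* \in A$ of $F_l$ that is much larger than $\|c_l'(a)\|$; then $F_l'(a^*) = \xi_{n+m,l}'(a^*) - c_l'(a^*) \neq 0$, and Lemma~\ref{Milnor} rules out singularities on the zero set. The bound on $\|c_l'(a^*)\|$ is easy: since $c_l$ is analytic on $\B(0,r)$ and stays in a fixed $\de^{10}$-neighbourhood of $Crit(f_0)$, a Cauchy estimate on a compactly contained subdisk gives $\|c_l'(a^*)\| \leq C_0$ for a constant $C_0 = C_0(f_0)$.

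Next I would show that $\|\xi_{n,l}'(a^*)\|$ is exponentially large in $n$. Since $\xi_{k,l}(a^*) \in \NN$ for $k \leq n$, Proposition~\ref{comp-prop} gives
\[
(1-\vep)\|\xi_{n,l}'(a^*)\| \leq |(f_{a^*}^n)'(v_l(a^*))|\,\|x_l'(a^*)\| \leq (1+\vep)\|\xi_{n,l}'(a^*)\|,
\]
and the induction at the start of the proof of Proposition~\ref{da/dz} then yields $\|\xi_{n,l}'(a^*)\| \geq e^{\ga' n}$ for some $\ga' > 0$. As $r \to 0$ the large scale is reached later (see Lemma~\ref{initdist}), so $n$ can be made arbitrarily large, and hence so can $\|\xi_{n,l}'(a^*)\|$.

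For the free period $n \leq k \leq n+m-1$ the hypothesis $\xi_{k,l}(a^*) \notin U_l$ together with $U_l$ being a $\de_l$-neighbourhood of $Crit(f)$ gives $|\partial_z f_{a^*}(\xi_{k,l}(a^*))| \geq c_1 \de_l^{K-1}$, where $K$ bounds the order of critical points. Using the chain-rule recursion
\[
\xi_{k+1}'(a) = \partial_z f_a(\xi_k(a))\,\xi_k'(a) + \partial_a f_a(\xi_k(a)),
\]
the uniform bound on $\partial_a f_a$, and the already large size of $\|\xi_{n,l}'(a^*)\|$, an induction modelled on Proposition~\ref{da/dz} gives
\[
\|\xi_{n+m,l}'(a^*)\| \geq (1/2)\bigl(c_1\de_l^{K-1}\bigr)^m \|\xi_{n,l}'(a^*)\|.
\]
Lemma~\ref{return} bounds $m$ by a constant $N$ independent of $r$, so $(c_1\de_l^{K-1})^m$ is a fixed positive constant. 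Shrinking $r$ then yields $\|\xi_{n+m,l}'(a^*)\| > 100\,C_0 \geq 100\,\|c_l'(a^*)\|$, hence $F_l'(a^*) \neq 0$.

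The main obstacle is surviving the small factor $(c_1\de_l^{K-1})^m$ in the free period, which could kill the lower bound if $m$ depended on $r$ or if $\de_l$ were as small as $r$. This is overcome because $m \leq N$ is fixed by Lemma~\ref{return} (depending only on the starting map $f_0$ and the large scale), while the exponential growth of $\|\xi_{n,l}'(a^*)\|$ in $n$ can be tuned to dominate the constant $(c_1\de_l^{K-1})^{-N}\,C_0$ by taking $r$ sufficiently small. The assumption that the first return time of $U_l$ into itself is at least $2m$ plays the auxiliary role of ensuring the orbit really does spend the whole free period outside $U_l$ after leaving $\NN$, so that the lower bound $c_1\de_l^{K-1}$ applies uniformly.
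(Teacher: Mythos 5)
Your proof is correct and follows essentially the same route as the paper's: exponential growth of $\|\xi_{n,l}'\|$ from hyperbolicity while the orbit stays in $\NN$, a bounded multiplicative loss over the free period because $m$ is bounded (via Lemma~\ref{return}) and the orbit avoids $U_l$, and a uniform bound on $\|c_l'\|$ so that shrinking $r$ (hence increasing $n$) wins. The paper compresses the free-period bookkeeping into a single application of Proposition~\ref{da/dz}, whereas you unpack it via the chain-rule recursion, but the underlying estimates are identical.
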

\begin{proof}
The condition on $U_l$ means that any solution to $F_l(a)=0$ for $a
\in A$ must have that $\xi_{k,l}(a) \cap U_l = \emptyset$ for all $k \leq n+m-1$.

We have $|(f_a^n)'(v_a)| \geq e^{\ga n}$, for some $\ga \geq 2\uli{\ga}$, by the definition of $\NN$.
% it follows from Proposition \ref{da/dz} that $|\xi_{n,1}'(a)| \geq e^{\ga n}$.
We can choose $r$ so that $m/n$ is
arbitrarily small, i.e. during the iterates $n+1,\ldots,n+m$ we do not
lose much in derivative. In other words,
\[
|(f_a^{n+m})'(v(a))| \geq  e^{\ga n} |(f_a^m)'(f_a^n(v(a)))| \geq e^{\ga_1 n},
\]
where $0 < \ga_1 < \ga$. Indeed, we can get $\ga_1$ as close to $\ga$
as we want. Choose $\ga_1$ so that $\ga_1  \geq \uli{\ga}$. By Proposition \ref{da/dz},
\[
\|\xi_{n+m,l}'(a)\| \geq e^{\ga_1 (n+m)} \|x'(a)\| \geq  e^{\ga' (n+m)},
\]
for some $\ga' \geq (1/k)\ga_1$ (see proof of Proposition \ref{da/dz}). Choosing $n$ sufficiently large
(i.e. $r > 0$ sufficiently small), we can therefore ensure that
\[
\|\xi_{n+m,l}'(a)\| > 100 \|c_l'(a)\|,
\]
for all $a \in V  \cap A$, where $V$ is a neighbourhood the solution set
$\xi_{n+m,l}(a)-c_{l}(a)=0$. Hence $F'(a) \neq 0$ for all $a \in V \cap A$.
\end{proof}
%Hence, for sufficiently small $r > 0$, the set of zeros of $F(a) =
%\xi_{n+m,l}(a)-c_{l}(a)$ is a manifold for $a \in \B(0,r)$.

Passing on to a certain subset of $\B_1' \subset W$, we want
to show that this set has low curvature viewed as a surface
embedded in $W$. To see what conditions are imposed on such a set, we begin with showing that $\xi_{n+m,1}(a)$ has bounded distortion in $\B_1'$ if $|\xi_{k,1}(a)-\xi_{k,1}(b)| \leq T$ for all $k \leq n+m$ all $a,b \in \B_1'$ and some number $T > 0$, depending on $U=U$.

\begin{Lem} \label{directdist}
Let $U' \supset U$ be a $10\de$-neighbourhood of
$Crit(f) \cap J(f)$ and let $\vep  > 0$. Then there exist $r > 0$, $T
> 0$ where $T$ only depends on $U$ and $\vep$, such that
the following holds. Assume that $a_0 \in W \subset \B(0,r)$,
$\xi_{n,i}(a_0) = c_{j}(a_0)$ and $\xi_{k,i}(a_0) \cap U' =
\emptyset$ for all $k \leq n-1$. Then we have
\begin{equation}
\| \xi_{n,i}'(a_0) - \xi_{n,i}'(a) \| \leq \vep \| \xi_{n,i}'(a)    \|,
\label{distor}
\end{equation}
if $|\xi_{k,i}(a_0)-\xi_{k,i}(a)|\leq T$ for all $k \leq n$. Moreover, for
each such $a$, we have that $\xi_{k,i}(a) \cap U =
\emptyset$, for all $k \leq n-1$.
\end{Lem}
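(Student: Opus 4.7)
The plan is to verify the two conclusions separately. The second assertion, that $\xi_{k,i}(a)\cap U=\emptyset$ for $k\leq n-1$, is immediate from the triangle inequality: since $U'$ is a $10\de$-neighborhood of $Crit(f)\cap J(f)$ and $U$ is only the $\de$-neighborhood, any point at distance $T\leq 9\de$ from a point outside $U'$ still lies outside $U$. This pins down a first upper bound on $T$ and uses no information beyond the hypothesis.

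For the distortion estimate (\ref{distor}), I would decompose the orbit $\{\xi_{k,i}(a_0)\}_{k=0}^{n-1}$ into alternating \emph{bound} intervals on which $\xi_{k,i}(a_0)\in\NN$ and \emph{free} intervals on which $\xi_{k,i}(a_0)\in\hat{\C}\setminus(\NN\cup U')$. On the bound intervals the Main Distortion Lemma \ref{main-dist} compares $(f^k_a)'(v_i(a))$ with $(f^k_{a_0})'(v_i(a_0))$ along the two orbits, which both stay in $\NN$ once $T$ is sufficiently small. On each free interval the Extended Distortion Lemma \ref{extend-dist} does the corresponding job, provided one has an a priori upper bound $N$ on its length; such a bound is supplied by Lemma \ref{return}, which says that once a disk of diameter at least $S'$ centered on the Julia set appears along the orbit, its image covers $\oli U$ within at most $N$ iterates. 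Hence a free excursion avoiding $U$ cannot persist beyond $N$ steps, and there are only boundedly many such excursions before the terminal iterate $\xi_{n,i}(a_0)=c_j(a_0)\in U$. I would then convert the resulting spatial distortion estimate into the desired parameter-derivative bound via Propositions \ref{comp-prop} and \ref{da/dz}, which (up to lower-order, controlled error terms) factor $\xi_{n,i}'(a)$ as $(f_a^n)'(v_i(a))\cdot x_i'(a)$, the second factor being controlled on Whitney disks by Lemma \ref{circles}.

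The constants must then be chosen in order: first $N$ from Lemma \ref{return}; next the pair $(S',r_1)$ from Lemma \ref{extend-dist} applied with tolerance $\vep/(3M)$, where $M$ bounds the number of bound/free transitions; next the radius $r_2$ from Lemma \ref{main-dist} at the same tolerance; and finally $T=\min(S',9\de)$ and $r=\min(r_1,r_2)$. The main obstacle is the bookkeeping required to see that the number $M$ of transitions is indeed uniformly bounded and that the cumulative distortion over all bound and free segments stays below $\vep$. This rests on the Misiurewicz structure, which makes $\La$ a hyperbolic attractor for the bound pieces, together with the fact that the prescribed terminal landing $\xi_{n,i}(a_0)=c_j(a_0)$ localizes where the orbit must end and so prevents the free excursions from recurring indefinitely.
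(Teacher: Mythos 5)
Your plan overlaps substantially with the paper's proof, but there is one structural simplification you miss and one genuine gap in the justification.

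Structurally, the paper does not alternate bound and free intervals: it uses a single bound segment followed by a single free segment. It takes a full-dimensional $k$-Whitney disk $D_0$ centered at $a_0$, lets $n_1$ be the maximal time for which $\xi_{n_1}(D_0) \subset \NN$ with $\operatorname{diam}\xi_{n_1}(D_0) \leq S$, and controls distortion on $[0,n_1]$ by Lemma \ref{initdist} directly rather than re-deriving it from Lemma \ref{main-dist}. All of $[n_1,n]$ is then one free block handled by Lemma \ref{extend-dist}; it is irrelevant whether the orbit happens to re-enter $\NN$ during that window, which is exactly why no alternation is needed. The final step -- converting the spatial estimate into $\|\xi_{n,i}'(a_0)-\xi_{n,i}'(a)\|\leq\vep\|\xi_{n,i}'(a)\|$ via Proposition \ref{da/dz} and Lemma \ref{circles} -- matches yours.

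The genuine gap is your justification that the free period lasts at most $N$ steps and that the number $M$ of bound/free transitions is uniformly bounded. Lemma \ref{return} is a statement about disks: the image of a disk of a definite diameter centered on $J(f)$ covers $\oli U$ within $N$ iterates. It does not say the single orbit point $\xi_k(a_0)$ enters $U$ within $N$ steps after such a disk appears, and it gives no bound at all on how many free excursions occur. So the quantity $M$ is not controlled by the tools you cite, and you correctly flag this as the main obstacle. What in fact bounds the free period is external to the lemma: when Lemma \ref{directdist} is applied inside Lemma \ref{induI}, the solution $a_0$ is produced by Lemma \ref{induII} with $m_{l+1}\leq N_l$, which comes from applying Lemma \ref{return} to the full Whitney disk $\xi_{n_{l+1}}(\B_l')$, not to the orbit of $a_0$ alone. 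The paper's own proof silently relies on this external bound when it sets $N=n-n_1$ in Lemma \ref{extend-dist} and then claims $T$ depends only on $U$. So your instinct to reach for Lemma \ref{return} is correct, but it must act on the Whitney disk in the inductive construction, not on the individual orbit, and with a single free segment to control rather than an unbounded alternation.
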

\begin{proof}
%Lemma \ref{initdist} implies that if $n+m$ is
%replaced by $n$ then (\ref{distor}) holds on ($1$-dimensional) Whitney disks
%$D_0 \subset \B(0,r)$, with $D_0=B(a_0,r_0)$, $r_0=|a_0|k$, for some
%$0 < k < 1$.
Write $\xi_{k,i}=\xi_k$.
Let $S > 0$ and $0 < k < 1$ be from Lemma \ref{initdist}. Assume that $n_1$ is maximal such that
\[
diam(\xi_{n_1}(D_0)) \leq S, \quad \text{ and } \quad \xi_{n_1}(D_0) \subset \NN,
\]
where $D_0 \subset W$ is a full-dimensional $k$-Whitney disk with center at $a_0$.
Lemma \ref{initdist} implies that (\ref{distor}) holds if $n$ is replaced by $n_1$ for $T = S$ and when $a \in D_0$. We will show that (\ref{distor}) holds after $n$ iterates for some $T \leq S$.

Let $S' > 0$ be the constant in Lemma \ref{extend-dist}, given by $N=n-n_1$, $U$ and some suitable sufficiently small $\vep > 0$. Choose $T \leq S'$ maximal such that the condition $|\xi_k(a_0)-\xi_k(a)|\leq T$ for all $k \leq n$, implies that
$\xi_{k}(a) \cap U = \emptyset$ for all $k \leq n-1$. Note that $T$ only depends on $U$. Then Lemma \ref{extend-dist} together with Lemma \ref{main-dist} implies that
\[
\biggl| \frac{(f_{a_0}^{n})'(v_{a_0})}{(f_a^{n})'(v_a)} - 1 \biggr| \leq \vep_1,
\]
where $\vep_1$ is some suitable sufficiently
small positive number ($\vep_1$ depends on $\vep$). Moreover, by a the same argument as in the beginning of the proof of Proposition \ref{da/dz}, we have $|(f_x^k)'(v_x)| \geq e^{\ga' k}$, for $x=a,a_0$ and all $k \leq n$ and some $\ga' \geq \uli{\ga}$.
Proposition \ref{da/dz} and Lemma \ref{circles} imply that (\ref{distor}) holds since $a \in D_0$.
% But since $S \leq S'$, the assumption
%$|\xi_k(0)-\xi_k(t_1)| \leq S$ for all $k \leq n+m$ already implies
%that $\ga([0,t_1])$ belongs to $D_0$.
The lemma is proved.
\end{proof}

Let us assume that $\xi_{n+m,l}(a)-c_l(a)=0$ and that $\xi_{n+m,l}(a)$ satisfies
the assumptions in the above lemma. Put $F(a)=\xi_{n+m,l}(a)-c_l(a)$.
%Assume that the parameter space of rational maps
%of degree $d$ is parameterized by the $2d-2$-tuples
%$(a_1,\ldots,a_{2d-2})$. We have $\nabla F = (\partial F /\partial a_1,
%\ldots, \partial F / \partial a_{2d-2})$.
%Since $\partial F / \partial
%a_j$ is the derivative in some direction,
Lemma \ref{directdist} implies immediately that $\| \nabla F(b) - \nabla F(a) \|$ is small if $b$
satisfies $|\xi_k(a)-\xi_k(b)| \leq T$ for all $k \leq m+n-1$. Hence if that holds for all parameters $a$ in some submanifold in $W$, it means this manifold has low curvature.

%In the next lemma, we show that the set of parameters $b$ satisfying the %condition
%$|\xi_{k,l}(a)-\xi_{k,l}(b)|\leq T_l$ contains a $k_{l+1}$-Whitney
%disk, where $0 < k_{l+1} < 1$ depends only on the large scale in the previous %step, i.e. $k_{l}$ (or $S_l$).
%We get that for some given fixed small $\vep > 0$
%(one can take $\vep=1/1000$) there is some $k_{l+1}$-Whitney disk $D \subset %\B_0$ such that
%\begin{equation}
%\| \nabla F(a)  - \nabla F(b) \| \leq \vep,
%\label{nabla-dist}
%\end{equation}
%for all $a,b \in D$.
% satisfies
%\begin{equation} \label{hello}
%diam(D) \geq k' dist(D,0).
%\end{equation}

\begin{Def}
Suppose that $E$ is an open $n$-dimensional connected manifold parameterised by some open set $D \subset \C^n$, where $\phi: D \raw E$ is a
diffeomorphism and $E=\phi(D)$ and $\phi(\partial D) = \oli{E} \sm
E$.
We say that $E$ is {\em almost planar} if
\[
\| \phi'(x) - \phi'(y) \| \leq 1/100,
\]
for all $x,y \in D$.

If, in addition, $D$ is a disk and
\[
diam(E) \geq k dist(E,0),
\]
then we say that $E$ is an almost
planar $k$-Whitney disk. Moreover, if $d=diam(E)$, then we say that the
radius of $E$ is $r=d/2$.
For any $0< r' < r$ by $ddist(x,\partial E) \geq r'$ we mean the set $\{ x \in E:
dist(x,\oli{E} \sm E) \geq r'\}.$
%In this case we say that $E$ is an almost
%planar Whitney disk of radius $r$.
\end{Def}
%Note that $\partial (\phi(D))$ is well defined for every
%parameterisation $\phi$ since $\phi(\partial D) = \oli{E} \sm E$.

Let us now assume that we are in the $l$th step so that we have
constructed a nested sequence of almost planar disks $\B_{k+1}
\subset \B_k$, $0 \leq k \leq l-1$,
such that each $\B_k$ has that $F=F_k(a)= \xi_{n_k+m_k,k}(a)-c_k(a)=0$
for all $a \in \B_k$. In fact, since the sequence is nested,
$F_k(a)=0$ holds for all $1 \leq k \leq l$ in $\B_l$.
%Moreover,
%these manifolds are Whitney in the sense that
%\[
%diam(\B_l) \geq k_l dist(\B_l,0),
%\]
%for some $0 < k_l < 1$, where also $k_{j+1} \leq k_j$.
Moreover, note that the normal vectors to the solution sets are
$F_k'(a) = \xi_{n_k+m_k,k}'(a) - c_k'(a)$. These vectors are
arbitrarily close to $\xi_{n_k+m_k,k}'(a)$ because
$\| \xi_{n_k+m_k,k}'(a) \|$ is much larger than $\| c_k'(a) \|$,
which follows from Lemma \ref{nosing}. Also, $\xi_{n_k+m_k,k}'(a)$ is well approximated by $
(f_a^{n_k+m_k})'(v_k(a)) x_k'(a)$ by Proposition \ref{comp-prop}. Hence we have the following important fact:
\begin{Fact}
The normal vectors to the solution sets $F_k(a)=0$ are, up to arbitrary low distortion, parallell to $x_k'(a)$.
\end{Fact}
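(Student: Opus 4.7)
The plan is to decompose the gradient $F_k'(a) = \xi_{n_k+m_k,k}'(a) - c_k'(a)$ and then replace each piece by something proportional to $x_k'(a)$, tracking a total error that is at most an arbitrarily small fraction of $\|\xi_{n_k+m_k,k}'(a)\|$. The ``parallel up to low distortion'' conclusion is then just the triangle inequality, noting that $(f_a^{n_k+m_k})'(v_k(a))$ appears only as a \emph{scalar} factor and therefore does not change the direction.

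First I would dispose of the $c_k'(a)$ term. Since the construction of $\B_k$ was arranged so that $\xi_{j,k}(a) \cap U_k = \emptyset$ for $j \leq n_k + m_k - 1$ and $\xi_{j,k}(a) \in \NN$ for $j \leq n_k$, the hypotheses of Lemma \ref{nosing} hold on the zero set of $F_k$ inside $\B_k$. That lemma gives
\[
\| c_k'(a) \| \leq \frac{1}{100}\,\| \xi_{n_k+m_k,k}'(a) \|,
\]
so $\| F_k'(a) - \xi_{n_k+m_k,k}'(a)\| \leq \| \xi_{n_k+m_k,k}'(a)\|/100$.

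Next I would apply Proposition \ref{comp-prop} to $\xi_{n_k+m_k,k}'(a)$. By the choice of $U_k$ and of $r$, the orbit $\xi_{j,k}(a)$ remains in $\NN$ for $0 \leq j \leq n_k$, and at the stopping time $n_k$ we have $\de'' \leq |\xi_{n_k,k}(a) - \mu_{n_k,k}(a)| \leq \de'$ by the selection rule used to define $n_k$ in Lemma \ref{initdist}. Proposition \ref{comp-prop} therefore yields
\[
\bigl\| \xi_{n_k,k}'(a) - (f_a^{n_k})'(v_k(a))\, x_k'(a) \bigr\| \leq \vep\, \| \xi_{n_k,k}'(a)\|,
\]
for any preassigned $\vep > 0$; then one propagates this through the free period $n_k < j \leq n_k + m_k$ using Lemma \ref{extend-dist}, during which only a scalar multiplicative factor is accumulated. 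Since $(f_a^{n_k+m_k})'(v_k(a))$ is a nonzero complex scalar, the approximating vector $(f_a^{n_k+m_k})'(v_k(a)) x_k'(a)$ is parallel to $x_k'(a)$. Combining with the previous step, the triangle inequality gives
\[
\bigl\| F_k'(a) - (f_a^{n_k+m_k})'(v_k(a))\, x_k'(a) \bigr\| \leq \bigl(\vep + \tfrac{1}{100}\bigr)\| \xi_{n_k+m_k,k}'(a) \|,
\]
and the right-hand side can be made arbitrarily small relative to $\| \xi_{n_k+m_k,k}'(a) \|$, hence relative to $\| F_k'(a)\|$, by shrinking~$r$.

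The main obstacle is a bookkeeping one rather than a conceptual one: namely, to check that the hypotheses of Lemma \ref{nosing} and Proposition \ref{comp-prop} really do hold at \emph{every} point of the zero set inside $\B_k$, not merely at the distinguished parameter $a_0$ around which $\B_k$ was centered. This is where one needs Lemma \ref{directdist} and the ``almost planar'' geometry of $\B_k$: the condition $|\xi_{j,k}(a_0)-\xi_{j,k}(a)| \leq T$ is preserved along $\B_k$ by the way it was built, forcing $\xi_{j,k}(a) \cap U_k = \emptyset$ and the needed bounded-distortion estimates to hold uniformly on $\B_k$. Once this uniformity is in hand, the argument above applies pointwise on the zero set and delivers the Fact.
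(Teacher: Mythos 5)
Your proof takes essentially the same route as the paper: decompose $F_k'(a)=\xi_{n_k+m_k,k}'(a)-c_k'(a)$, kill the $c_k'$ term via Lemma~\ref{nosing}, approximate $\xi_{n_k+m_k,k}'(a)$ by a scalar multiple of $x_k'(a)$ via Proposition~\ref{comp-prop}, and observe that the scalar $(f_a^{n_k+m_k})'(v_k(a))$ does not change direction. One small correction of bookkeeping: for propagating the comparison $\xi_n'(a)\approx (f_a^n)'(v_k(a))\,x_k'(a)$ through the free period $n_k < j \leq n_k+m_k$, the chain rule produces additive terms $\partial_a f_a(\xi_j(a))$ that must be shown to be negligible; the tool controlling exactly that is Proposition~\ref{da/dz} (comparing $Q_n$ to $Q_N$), not Lemma~\ref{extend-dist}, which only compares space-derivatives at nearby points and does not by itself bound the additive parameter-derivative contributions.
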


A priori, Lemma \ref{circles} only applies to Whitney disks rather than almost planar Whitney disks.
However, since $\B_l$ is almost planar, it is uniformly well approximated by a
hyperplane, i.e. the tangent space at points on $\B_l$ does not vary much on $\B_l$. By Lemma \ref{initdist} it then follows that
if $c_{l+1}$ has finite order contact, and if $x_{l+1}'(a)$ is not perpendicular to $\B_l$ for $a \in \B_l$,
the set $x_{l+1}(\B_l)$ will then grow to the large scale $S=S_l$ before
leaving $\NN$, i.e. $\xi_{n,l+1}(\B_l) \subset \NN$ contains a
disk of diameter $S_l$. By Lemma \ref{return} there is
some $N=N_l$ depending on $S_l$ such that $\xi_{n+m,l+1}(\B_l)$ covers
$\oli{U}$ for some $m \leq N_l$. Clearly, if $\B_l$ is an almost
planar $k_l$-Whitney
disk, then $N_l$ depends only on $k_l$.

We now prove that if a given solution is found to $F_l(a)=0$, then the parameters satisfying the conditions in Lemma \ref{directdist} will contain a $k_{l+1}$-Whitney disk. Recall that the sets $U_l$ are $\de_l$-neighbourhoods around the critical points on the Julia set for $f$. Let $U_l' \supset U_l$ be
$10\de_l$-neighbourhoods around these critical points. Let $M_0=\max
|f_a'(z)|$ where the maximum is taken over all $(z,a) \in \hat{\C}
\times \oli{\B}(0,r)$.
\begin{Lem}[Inductive Lemma I] \label{induI}
Assume that $\B_{l} \subset \B_0 \subset W$ is an almost planar Whitney
disk of diameter $2r_l  \geq k_l dist(\B_l,0)$
and for which every $a \in \B_l$ has
that $\xi_{n_k+m_k,k}(a)-c_{k}(a)=0$ for all $1 \leq k \leq
l$ (if $l=0$ we have no solutions so far). Assume that we have found a solution to
$$\xi_{n_{l+1}+m_{l+1},l+1}(a_0)-c_{l+1}(a_0)=0$$ for some $a_0
\in \B_l$, such that $ddist(a_0,\partial \B_l) \geq r_l/2$ and such that $\xi_{n_{l+1}}(\B_l) \subset \NN$
and $m_{l+1} \leq N_l$, where $N_l$ only depends on $k_l$.

Then if $r > 0$ is sufficiently small, and if $dim(\B_l) > 1$, there
exists an almost planar $k_{l+1}$-Whitney disk $\B_{l+1} \subset \B_l$ of codimension $1$ (in $\B_l$), where $k_{l+1}$ only depends on $k_l$. For every
$a \in \B_{l+1}$ we have
$\xi_{n_{l+1}+m_{l+1},l+1}(a)-c_{l+1}(a)=0$. If $dim(\B_l) = 1$, the set $\B_{l+1}$ might reduce to a single point.
%Moreover, inside
%$\B_{l+1}$ we have strong distortion estimates on $F_{l+1}$, i.e. (\ref{nabla-dist}) holds.
\end{Lem}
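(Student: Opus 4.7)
The plan is to apply the implicit function theorem to the complex-analytic map $F(a) = \xi_{n_{l+1}+m_{l+1},l+1}(a) - c_{l+1}(a)$ restricted to $\B_l$, so that $\B_{l+1}' := \{a \in \B_l : F(a)=0\}$ is a codimension-one submanifold, and then to carve out an almost planar Whitney disk $\B_{l+1} \subset \B_{l+1}'$ around the given solution $a_0$ with good geometric properties.

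First I would check that the hypotheses of Lemma \ref{nosing} are met for the new data: $\xi_{k,l+1}(\B_l) \subset \NN$ for $k \leq n_{l+1}$, the condition that $\xi_{k,l+1}(a)$ avoids the critical neighbourhood $U_{l+1}$ for $k \leq n_{l+1}+m_{l+1}-1$ (which uses the first-return property of $U_{l+1}$ combined with $m_{l+1} \leq N_l$), and the required exponential lower bound on $|(f_a^{n_{l+1}})'(v_{l+1}(a))|$ guaranteed by $\NN$. The lemma then yields $\|\xi_{n_{l+1}+m_{l+1},l+1}'(a)\| > 100\,\|c_{l+1}'(a)\|$ on the zero set of $F$, so $\nabla F \neq 0$ there; by Lemma \ref{Milnor}, the connected component $\B_{l+1}'$ of $F^{-1}(0)$ through $a_0$ is a smooth complex submanifold of codimension one in $\B_l$.

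Next I would upgrade this to geometric control using Lemma \ref{directdist}. The Fact preceding Lemma \ref{induI} identifies the normal direction of $\B_{l+1}'$ with $x_{l+1}'(a)$ up to arbitrarily small distortion. For any $a$ close to $a_0$ with $|\xi_{k,l+1}(a) - \xi_{k,l+1}(a_0)| \leq T$ for all $k \leq n_{l+1}+m_{l+1}$, Lemma \ref{directdist} gives $\|\nabla F(a) - \nabla F(a_0)\| \leq \vep \|\nabla F(a)\|$, while Lemma \ref{circles} on the ambient cone $W$ controls the rotation of $x_{l+1}'$ itself over Whitney disks. Consequently the tangent hyperplanes of $\B_{l+1}'$ rotate by less than $1/100$ over a neighbourhood of $a_0$, so a sufficiently small piece of $\B_{l+1}'$ through $a_0$ is the graph of a slowly-varying function over a fixed hyperplane, i.e.\ almost planar in the sense of the definition above.

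For the Whitney geometry, observe that the scale $T$ from Lemma \ref{directdist} depends only on $U_{l+1}$, while the proof of Lemma \ref{nosing} supplies the exponential lower bound $\|\xi_{n_{l+1}+m_{l+1},l+1}'(a)\| \geq e^{\ga'(n_{l+1}+m_{l+1})}$. Pulling $T$ back to parameter space around $a_0$ yields a ball of some radius $\rho$ in which all the above is valid; I would then set $r_{l+1} = \min(\rho, r_l/4)$ and define $\B_{l+1}$ as the piece of $\B_{l+1}'$ inside the ball of radius $r_{l+1}$ centred at $a_0$. The hypothesis $\mathrm{ddist}(a_0,\partial \B_l) \geq r_l/2$ guarantees containment in $\B_l$, and since $\dist(\B_{l+1},0) \leq \dist(\B_l,0) + 2r_l \leq (2/k_l + 2)r_l$, the Whitney ratio is bounded below by a constant $k_{l+1}$ depending only on $k_l$. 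The main obstacle is precisely this size matching: one must ensure that the pull-back scale $\rho$ is not forced much smaller than $r_l$, which requires choosing the global perturbation $r > 0$ so small (and hence $n_{l+1}$ so large) that the pull-back of $T$ easily dominates $r_l/2$. This is possible because $T$ depends on $U_{l+1}$ alone and not on the induction depth $l$. In the degenerate case $\dim(\B_l) = 1$, the codimension-one intersection may reduce to the single point $a_0$, matching the last sentence of the lemma.
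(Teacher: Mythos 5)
Your overall strategy matches the paper's: use Lemma \ref{nosing} with Lemma \ref{Milnor} to obtain the smooth codimension-one zero set, then apply Lemma \ref{directdist} to gain distortion control over the domain $\{b : |\xi_{k,l+1}(a_0)-\xi_{k,l+1}(b)| \leq T, \ k \leq n+m\}$, and carve out a Whitney disk from the zero set there. The almost-planarity argument (low rotation of $\nabla F$, Fact identifying the normal with $x_{l+1}'$) is also in line with the paper.

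However, the final quantitative step has a genuine gap. You propose to ensure that the pull-back radius $\rho$ of $T$ to parameter space ``easily dominates $r_l/2$'' by taking the global perturbation $r>0$ small (hence $n_{l+1}$ large). This does not work: the ratio $\rho/r_l$ is essentially independent of $r$. Indeed, $\xi_{n_{l+1}}(\B_l)$ has diameter comparable to $S_l$, so $\|\xi_{n_{l+1}}'(a_0)\| \sim S_l/r_l$, and after the bounded free period ($m_{l+1}\leq N_l$ iterates with expansion at most $C_l=C_l(N_l)$) the pull-back of $T=T_l$ is $\rho \sim (T_l/C_l)\cdot r_l/S_l$. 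Shrinking $r$ scales $r_l$ and $\rho$ proportionally, leaving $\rho/r_l \sim T_l/(C_l S_l)$ fixed. This ratio is typically much smaller than $1/2$. The correct conclusion, as in the paper, is not that $\rho$ can be forced to be $\geq r_l/2$, but that $k_{l+1} \approx k_l\,(S_{l+1}/S_l)$, where $S_{l+1} = T_l/(4 C_l M_0)$ and the quantities $S_l, N_l, U_l, C_l, T_l$ form a dependency chain rooted only in $k_l$; hence $k_{l+1}$ depends only on $k_l$ even though $k_{l+1}<k_l$. Your estimate $\dist(\B_{l+1},0)\leq (2/k_l+2)r_l$ is fine, but combined with your stated definition $r_{l+1}=\min(\rho,r_l/4)$, you would only get a $k_l$-dependent Whitney constant if you separately establish that $\rho/r_l$ depends only on $k_l$ — which is exactly the chain-of-constants bookkeeping your proof omits and the paper spells out.

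A minor point: the paper uses the neighbourhood $U_l$ (with its first-return time $\geq 2N_l$) to obtain $T_l$, not $U_{l+1}$; this is needed precisely so that $T_l$ sits at depth $l$ in the dependency chain $k_l \to S_l \to N_l \to U_l \to T_l \to S_{l+1}$.
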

\begin{proof}
We can without loss of generality assume that $n_{l+1}$ is the largest integer such that $\xi_{n_{l+1}}(\B_l)
\subset \NN$ and such that $\xi_{n_{l+1}}(\B_l)$ contains a disk of
diameter $S_l$, where $S_l > 0$ is the large scale from Lemma
\ref{initdist}. Now choose $U_{l}'$ such that
$\xi_{k,l+1}(a_0) \cap U_{l}' = \emptyset$ for all $k \leq m_{l+1}+n_{l+1}-1$ and that the first return
time from $U_{l}'$ to itself is at least $2N_l$. Hence $U_{l}$ (which is a
$\de_{l}$ neighbouhood of $Crit(f_0)$) depends only on $m_{l+1} \leq N_l$,
given that $r > 0$ is sufficiently small. The condition on $r> 0$ is that $c(a) \in U(c(0),\de_{l}^{10})$
for all critical points $c(a)$, $a \in \B(0,r)$.

%The existence of a smooth manifold $\B_{l+1}' \subset \B_l$, such that $a
%\in \B_{l+1}'$ implies $\xi_{m_{l+1}+n_{l+1},l+1}(a)-c_{l+1}(a)=0$ now follows %from

Put $E= \{a \in W: \xi_{m+n,l+1}(a)-c_{l+1}(a)=0 \}$. By assumption, the set $E \cap \B_l$ is non empty.
Since $dim(\B_l) > 1$ and $E$ has codimension $1$, using Lemma \ref{nosing} with $A=\B_l$, we see that the set
$E \cap \B_l$ is a smooth manifold. Moreover, we must have $dim(E \cap \B_l) \geq 1$. If $dim(\B_l)=1$
then $E \cap \B_l$ might reduce to a single point.

%Next, we prove that (\ref{nabla-dist}) holds for %$F(a)=\xi_{m+n,l+1}(a)-c_{l+1}(a)$
%on $k_{l+1}$-Whitney disks in $\B_{l+1}'$.
Let $\xi_{k,l+1}=\xi_{k}$ and put $m_{l+1}=m$ and $n_{l+1}=n$.
%This will imply that $\B_{l+1}$ is smooth and almost planar inside $\B'$.
%Take any $r_l/2$-Whitney disk $\B$ centered at $a$ and put $D=\B \cap %\B_{l+1}' \subset \B_l$.
According to Lemma \ref{directdist}, to have good geometry control of a manifold in $W$,
any parameter $b$ in this manifold must satisfy
\begin{equation}
| \xi_k(a_0) -\xi_k(b) | \leq T_{l}
\label{cond}
\end{equation}
for some $T_{l} > 0$ depending on $U_l$ for all $k \leq n+m$. We will show that the set of such parameters
$b$ satisfying (\ref{cond}) contains a $k_{l+1}$-Whitney disk $\B'$,
centered at $a_0$, where $k_{l+1}$ only depends on $k_{l}$.
% for some $D' \subset D$.
%During the next $m$ iterates we
%have $diam (\xi_{n+m}(D')) \leq S_{l+1}$ for a slightly smaller set $D'
%\subset D$.
%Note that Lemma \ref{directdist} also gives that $\xi_{k}(D') \cap U_{l+1} = %\emptyset$ for all
%$k \leq n+m-1$.
%In other words, we shall show that the condition (\ref{cond}) implies
%that $diam(D')/diam(D) \geq K$, where $K$ only depends on $U_l$.
%Let us see how much we have to diminish $\B_{l+1}'$.
%By Lemma \ref{initdist} $|\xi_{k}(a_0)-\xi_{k}(b)|$ grows exponentially for $k %\leq n$.
%In fact, we know that
%$\xi_n(\B_l)$ contains a disk of diameter $S_l$ ($S_l$ is assumed to be maximal).
%Assume that
%$|\xi_{k}(a_0)-\xi_{k}(b)| \leq S_{l+1}' \leq T_l$ for all $k \leq n$.
%Let us calculate the maximal possible $S_{l+1}'$ in order to have
%(\ref{cond}) fulfilled up to time $n+m$.

%%%%%%%%%%%%%%%%%%%%%%%%%%%%%%%%%%%%%%
\comm{
We estimate the expansion during the
the last $m \leq N_l$ iterates. We first note that
\[
|\xi_{n+j}(a)-\xi_{n+j}(b)| \leq |f_a^j(\xi_n(a))-f_a^j(\xi_n(b))| +
|f_a^j(\xi_n(b)) - f_b^j(\xi_n(b))|.
\]
The second term in the right hand side can be made arbitrarily small if $r > 0$ is small
enough.
%If all $b \in D$ satisfies (\ref{cond}) then Lemma \ref{directdist} implies that $\xi_n(D) \subset \NN$ and
%$\xi_{k}(D) \cap U_{l} = \emptyset$ for all $k \leq n+m-1$.
This means that the first term on the right hand side can be estimated as follows:
\[
|\xi_{n+j}(a)-\xi_{n+j}(b)| \leq 2 \max |(f_a^j)'(z)| |\xi_n(a)-\xi_n(b)|.
\]
}
%%%%%%%%%%%%%%%%%%%%%%%%%%%%%%%%%%%%%%

%where the maximum is taken over all $z \in \xi_n(D)$ and $a
%\in D$.
Since $m \leq N_l$, the expansion $|(f_a^m)'(z)| \leq C_l=C_l(N_l)$ is
bounded and depends only on $N_l$. Hence,
$|\xi_k(a_0) -\xi_k(b)| \leq T_l$ for all $k \leq n+m$ if
$|\xi_{k}(a_0)-\xi_{k}(b)| \leq S_{l+1}'$ for all $k \leq n$, where
$S_{l+1}' \leq T_l/(2C_l)$. Now put
$S_{l+1}=S_{l+1}'/(2M_0)$ (then $S_{l+1}$ will be the new large scale).
We get that (\ref{cond}) holds for a $k_{l+1}$-Whitney $\B'$
disk centered at $a_0$, where $k_{l+1}$ is
minimal such that $\xi_n(\B')$ contains a
disk of diameter $S_{l+1}$ (then $diam(\xi_n(\B')) \leq S_{l+1}'$). Since $\xi_n$ is almost linear on
$k$-Whitney disks according to Lemma \ref{initdist}
(where also $k_{j+1} \leq k_j$, $k=k_0$), we get that $k_{l+1}/k_l  - S_{l+1}/S_l$ is arbitrarily close to zero
(hence $k_{l+1} \approx k_l S_{l+1}/S_l$).

Moreover, the set $E \cap \B'$ must be almost planar in $\B'$. It follows that $\B_{l+1} = \B' \cap (E \cap \B_l)$ is an almost
planar $k_{l+1}$-Whitney disk in $\B_l$.
% Hence Lemma \ref{extend-dist} and Lemma \ref{main-dist} gives that
%\[
%\biggl| \frac{(f_a^k)'(v_a)}{(f_b^k)'(v_b)} -1 \biggr| < \vep,
%\]
%for all $k \leq n+m$, $a,b \in D'$. Since the free period $n+1,\ldots,n+m$ has length $m$, we get
%\[
%|(f_a^{n+m})'(v_a))| \geq  e^{\ga n} |(f_a^m)'(f_a^n(v_a))| \geq Ce^{\ga n} \geq e^{\ga_1 n},
%\]
%for some $\ga_1 \geq \uli{\ga}$ (similar to the proof of Lemma
%\ref{nosing}). Proposition \ref{da/dz} now gives that for all $a,b
%\in D'$ we have strong distortion estimates up to time $n+m$.
Finally, we see that $k_{l+1}$ only depends on $S_{l+1}$, $k_l$ and
$S_l$. Clearly, $S_{l+1}$ depends only on $T_l$ and $C_l$.
Now $T_{l}$ depends on $U_l$ which in turn depends on $N_l$ and
moreover $C_l$ depends clearly on $N_l$.
Finally, $N_l$ depends only on $S_l$ (the previous large scale) which
in turn depends on $k_l$. Hence $k_{l+1}$ depends only on $k_l$.
%Put $\B_{l+1}=\B' \cap \B_{l+1}'$.
The lemma is proved.
\end{proof}

%We recall that if a critical point $c_j(a)$ is transversal then for some Whitney disk
%$D \subset \B(0,r)$, $\xi_{n,j}(D)$ will grow to the large scale under bounded distortion
%before it leaves $\NN$, according to Lemma \ref{initdist}.
\begin{Lem}[Inductive Lemma II] \label{induII}
Assume that we have found an almost planar $k_l$-Whitney disk $\B_l$ (of diameter $2r_l$) and a list of critical
points $C_l=\{c_{1},\ldots,c_{l} \}$ depending on the
parameter $a$ such that for each $c_{k} \in C_l$ we have $\xi_{n_k+m_k,k}(a)-c_{k}(a)=0$ for all $a
\in \B_l$ and all $1 \leq k \leq l$. Assume that $n_{l+1}$
is maximal such that $\xi_{n_{l+1},l+1}(\B_l) \subset \NN$.

Then if $r > 0$ is sufficiently small there exists a solution to
$$\xi_{n_{l+1}+m_{l+1},l+1}(a)-c_{l+1}(a)=0$$ for some $a \in \B_l$, such that
$ddist(a ,\partial \B_l) \geq r_l/2$, where $m_{l+1} \leq N_l$, and
$N_l$ is an integer which only depends on $k_l$.
%If $j_1=1$ then for $h_2=2$, there is a solution to
%$\xi_{n+m,h_2}(a)=c_{j_{h_2}}(a)$ in $\B_1$.
\end{Lem}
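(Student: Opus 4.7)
The plan is to grow the image of $\B_l$ up to the ``large scale'' in $\NN$ under $\xi_{\cdot,l+1}$, use non-normality to force it to cover $\oli{U}$ after a bounded number of further iterates, and finally apply the argument principle to extract a zero of $F(a) := \xi_{n_{l+1}+m_{l+1},l+1}(a) - c_{l+1}(a)$ lying in the interior of $\B_l$. First, the finiteness of $n_{l+1}$ forces $c_{l+1}$ to have finite order contact on $\B_l$ and $x_{l+1}'(a)$ not perpendicular to the tangent space of $\B_l$; this can be arranged by a judicious choice of the index $l+1$, using Theorem \ref{ttrans} (the vectors $x_j'(0)$ are linearly independent, so some unused index is transversal to $\B_l$) and the fact from \cite{MA3} that a nontrivial analytic family of Misiurewicz maps is impossible. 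In order to land in the interior of $\B_l$ at the end, I work throughout with the sub-disk $\B_l^\ast$ of points in $\B_l$ at distance at least $r_l/2$ from $\partial\B_l$; this is still an almost planar Whitney disk whose Whitney constant depends only on $k_l$.

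Lemma \ref{initdist} (in its almost-planar-Whitney-disk version) then supplies strong distortion estimates on $\B_l^\ast$ and shows that $\xi_{n_{l+1},l+1}(\B_l^\ast)\subset \NN$ contains a two-real-dimensional disk in $\C$ of diameter at least a large scale $S_l$ depending only on $\de'$ and $k_l$, with centre close to $\mu_{n_{l+1},l+1}(a_c)\in \La_{a_c}\subset J(f_{a_c})=\hat{\C}$, where $a_c$ is the centre of $\B_l^\ast$. Applying Lemma \ref{return} to this disk, there is an integer $N_l$ depending only on $S_l$ (hence only on $k_l$), and an $m_{l+1}\leq N_l$, such that
\[
\xi_{n_{l+1}+m_{l+1},l+1}(\B_l^\ast)\supset \oli{U}.
\]
By the choice of the perturbation radius $r$, every $c_{l+1}(a)$ with $a\in\B(0,r)$ lies in a $\de^{10}$-neighbourhood of $Crit(f_0)$, well inside $U$.

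To extract an actual zero of $F$, I reduce to one complex dimension: take a one complex dimensional affine sub-disk $B\subset\B_l^\ast$ through $a_c$, with direction chosen so that $x_{l+1}'(a_c)$ is not perpendicular to $B$. Applying the previous step to $B$ in place of $\B_l^\ast$, $\xi_{n_{l+1}+m_{l+1},l+1}(B)$ again covers $\oli{U}$. Pick a connected component $B'\subset \xi_{n_{l+1}+m_{l+1},l+1}^{-1}(\oli{U})\cap B$ which is a topological disk on which $\xi_{n_{l+1}+m_{l+1},l+1}$ is a branched cover of $\oli U$ of positive degree $d_0\geq 1$. On $\partial B'$ one has $\xi_{n_{l+1}+m_{l+1},l+1}(\partial B')\subset\partial U$, while $c_{l+1}$ stays within $\de^{10}$ of $Crit(f_0)$, so $|F|$ is bounded below by roughly $\de$ on $\partial B'$. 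By the growth estimates underlying Lemma \ref{nosing}, $\|c_{l+1}'\|\ll\|\xi_{n_{l+1}+m_{l+1},l+1}'\|$ on $B$, and a Rouch\'e-type comparison shows that the winding number of $F(\partial B')$ about $0$ equals that of $\xi_{n_{l+1}+m_{l+1},l+1}(\partial B')-c_{l+1}(a_c)$ about $0$, which in turn equals the degree $d_0\geq 1$. The argument principle now yields a zero $a$ of $F$ in $B'\subset\B_l^\ast$, and this zero automatically satisfies $ddist(a,\partial\B_l)\geq r_l/2$.

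The hard part will be the final step: exhibiting a topological disk $B'\subset B$ on which $\xi_{n_{l+1}+m_{l+1},l+1}$ is a bona fide branched cover of $\oli U$ requires careful control of the distortion throughout the $m_{l+1}$ free iterates (for which Lemma \ref{extend-dist} is the natural tool), and tracking how the Whitney constants and large scales degrade from step $l$ to step $l+1$ is what makes the quantitative claim ``$N_l$ depends only on $k_l$'' actually go through.
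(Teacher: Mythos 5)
Your proposal follows essentially the same strategy as the paper's proof: use Lemma~\ref{initdist} together with the Fact and Theorem~\ref{ttrans} (to guarantee $x_{l+1}'$ is transversal to $\B_l$) to grow $\xi_{n_{l+1},l+1}(\B_l)$ to the large scale $S_l$ inside $\NN$; pass to the interior sub-disk to ensure the $ddist \geq r_l/2$ bound; invoke Lemma~\ref{return} to get $N_l$ and $m_{l+1}\le N_l$ so the image covers $\oli{U_l}$; and then conclude a coincidence $\xi_{n_{l+1}+m_{l+1},l+1}(a)=c_{l+1}(a)$ from the covering. The only real difference is that you reduce to a one-complex-dimensional slice and make the final existence step explicit via a Rouch\'e/argument-principle comparison between $\xi_{n+m}$ and the slowly-varying $c_{l+1}$; the paper leaves this step implicit (it jumps from the set containment $\xi_{n+m}(\B_l')\supset c_{l+1}(\B_l')$ directly to the existence of a solution), so your version actually supplies a cleaner justification of that last inference while otherwise tracking the same route.
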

\begin{proof}
Put $\xi_{n_{l+1},l+1}=\xi_n$.
%By Lemma \ref{induI} the set $\B_l$
%contains an almost planar $k_l$-Whitney disk $D_l$ of complex dimension $1$.
% Take a slightly smaller $(k_l/2)$-Whitney disk $D_l \in \B_l$ such that $dist(D_l, \partial \B_l) \geq (k_l/2) dist (\B_l,0)$.
It follows from Lemma \ref{initdist} that $\xi_n(\B_l)$ contains a disk of diameter at least
$S_l$ before leaving $\NN$ (where $S_l$ depends only on $k_l$).
Indeed, the disk $\B_l$ is an intersection of small manifolds
determined by $F_j(a)=\xi_{n_j,j}(a)-c_j(a)=0$. Each of these manifolds has
normal vectors equal to $F'(a) = \xi_{n_j,j}(a)-c_j'(a)$. By the Fact on p. 15,
we have that $F_j'(a)$ is almost parallell to $x_j'(a)$.
The vectors $x_j'(0)$ are all linearly independent by Theorem
\ref{ttrans}. This implies that if $\th$ is the angle between a tangent hyper-plane to
$\B_l$ and a tangent hyper-plane to $x_{l+1}'(a_0)$ (see before Lemma
\ref{initdist} for definition) then $cos(\th)$ is
bounded away from $0$ (since both these surfaces are almost planar their
tangent hyper-planes do not vary much).
%In other words, the angle between $\B_l$ and
%$x_{l+1}'(a_0)$ is bounded away from $\pi/2$.

Since $\xi_n(a) \in J(f_0)$ for all $a \in \B_l$
where $ddist(a,\partial \B_l) \geq (3/4)r_l$, there is some (maximal)
almost planar disk $\B_l' \subset \B_l$ centered at $a$ such that $ddist(b,\partial \B_l) \geq r_l/2$, for
all $b \in \B_l'$. The set $\xi_{n_{l+1}}(\B_l')$ will contain a disk of diameter $S_l/8$ centered at the
Julia set of $J(f_0)$. By Lemma \ref{return} there is an integer $N_l$, where
$N_l=N_l(S_l)$ only depends on $S_l$, such that $f_0^{m}(\xi_n(\B_l')) \supset \oli{U_l}$, $m \leq N_l$.
Let $m$ be defined by
\[
m = \inf \{ k > 0: f_0^k(\xi_n(\B_l')) \supset \oli{U_l}\}.
\]
%Assume that this $m$ is attained so that $c_{j_{h_{l+1}}}(0) \in f_0^m(\xi_n(D_l))$.
Since the parameter dependence can be made arbitrarily
small under $N_l$ iterates, by choosing $r > 0$ sufficiently small, we
can also ensure that $f_a^m(\xi_n(\B_l')) = \xi_{n+m}(\B_l') \supset
c_{l+1}(\B_l')$. Hence there is a solution to $\xi_{n_{l+1}+m_{l+1},l+1}(a)-c_{l+1}(a)=0$
inside $\B_l'$. By the definition of $\B_l'$, we have $ddist(a,\partial \B_l) \geq r_l/2$.
\end{proof}
%Now a problem arises of choosing $U_0$ so that we can repeat Inductive
%Lemma I again in the same way but with $\B_0$ replaced with $\B_1$,
%$\B_1$ replaced by $B_2$ and $k_1$ replaced by $k_2$. The problem lies in the %fact that $U_0$
%was chosen in Lemma \ref{nosing} to be so that the first return time
%into itself is at least $2N$. In Inductive Lemma II this $N$ is
%replaced by $N_1$ which a priori is larger. To resolve this, we simply
%replace $U_0$ with $U_1 \subset U_0$, where $U_1$ is a
%$\de_1$-neighbourhood around $Crit(f_0)$, so that the first return time into
%$U_1$ is at least $2N_1$. Note that this may also force $r > 0$ to be
%smaller. But we can always diminish $r > 0$ freely to fulfill any of
%the these needs.

\begin{Rem}
The dependence of the constants $U_l,T_l,S_l,N_l$ and $r >0$ might seem intricate.
Let us clarify the feasibility of choosing these constants in a consistent way.
Put $S=S_0$ and $k=k_0$ in Lemma \ref{initdist}. The constants $N_j,T_j,S_j,U_j$ depend on
each other as follows. The number $T_0$ depends on $U_0$, since the existence of $T_0$ follows
from a given $U=U_0$ in Lemma \ref{directdist}. From $T_0$ we get some new large scale $S_1$ and its corresponding
new Whitney number $k_1$ (see proof of Lemma \ref{induI}). Obviously, $N_l$ depends on $S_l$.
The neighbourhood $U_1$ depends on $N_1$ since $U_1$ is defined in terms of the first return time from $U_1$ into itself is at least
$2N_l$. Then again $T_1$ depends on $U_1$ and so on. One can write this as a scheme as follows. We write $X \raw Y$ if $Y$ depends on $X$ but not the converse.
\[
f \raw S_0 \raw N_0 \raw U_0 \raw T_0 \raw S_1 \raw N_1 \raw U_1 \raw T_1 \raw S_2 \raw \ldots.
\]
Since there are no loops in this scheme, i.e. there are no two distinct
elements $X,Y$ for which both $X \raw Y$ and $Y \raw X$, there is no problem
of choosing $S_j,U_j,N_j$.

Moreover, they are independent of $r > 0$, for all $r \leq R$, for some fixed (sufficiently small) $R > 0$.
%that all the critical points $c_j(a)$ are in a $\de_1^{10}$-neighbourhood of
%the critical points $c_j(0)$ for $a \in \B(0,r)$.
\end{Rem}

%We also need the following lemma.
\begin{Lem}\label{Green}
%Fix some $d > 0$ and assume that $f_0$ is a Misiurewicz map. Take any disk $D \subset F(f_0)$ such that $dist(D,J(f_0)) \geq d$. Then there is some $r > 0$ only depending on $d$ such that $a \in \B(0,r)$ implies that $D \subset F(f_a)$. Moreover, $D$ belongs to the basin of an attracting cycle for $f_a$.

Assume that $f_0$ is a Misiurewicz map. Then to any compact subset $K$ of the Fatou set $F(f_0)$ there is some $r > 0$ such that $K \subset F(f_a)$ for all $a \in \B(0,r)$.
\end{Lem}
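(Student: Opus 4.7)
Since $f_0$ has no parabolic periodic points by definition and, by the corollary of Ma\~n\'e's Theorem II mentioned in the Preliminaries, no Siegel disks, Herman rings or Cremer points, every periodic Fatou component of $f_0$ is either attracting or superattracting. By Sullivan's non-wandering theorem every Fatou component is eventually periodic, so $F(f_0)$ is the union of the basins of finitely many attracting (possibly superattracting) cycles $\gamma_1,\ldots,\gamma_s$. The plan is to build a common trap region for these cycles that survives perturbation, and then pull it back to $K$.

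First, for each cycle $\gamma_i$ of period $p_i$ I choose an open $W_i\supset\gamma_i$, a finite disjoint union of small round disks centered at the cycle points, small enough that $f_0(\oli{W_i})$ is compactly contained in $W_i$ and $\oli{W_i}\subset F(f_0)$. Put $W=\bigcup_i W_i$. Since $|(f_0^{p_i})'|<1$ along $\gamma_i$, the implicit function theorem applied to the equation $f_a^{p_i}(z)-z=0$ provides a holomorphic continuation $\gamma_i(a)$ of $\gamma_i$ for $a$ near $0$, whose multiplier depends continuously on $a$; hence $\gamma_i(a)$ remains attracting, and by continuity $f_a(\oli{W_i})$ is compactly contained in $W_i$ for every $a\in\B(0,r)$ provided $r>0$ is small enough. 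Consequently $W\subset F(f_a)$ for every such $a$.

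Next I pull this trap back to $K$. Each $z\in K$ lies in the basin of some $\gamma_i$, so there exist an integer $n(z)$ and an open neighbourhood $U_z\ni z$ with $f_0^{n(z)}(\oli{U_z})\subset W$. By compactness of $K$ finitely many such $U_{z_1},\ldots,U_{z_p}$ cover $K$. The maps $(a,z)\mapsto f_a^{n(z_j)}(z)$ are jointly continuous in $(a,z)$, so after shrinking $r$ once more I obtain $f_a^{n(z_j)}(\oli{U_{z_j}})\subset W\subset F(f_a)$ for every $j$ and every $a\in\B(0,r)$, whence $K\subset F(f_a)$. The only delicate step is the persistence of the trap $W$, but this is immediate from the implicit function theorem since the multipliers of attracting cycles of $f_0$ are bounded away from $1$ in modulus; superattracting cycles require no separate treatment, because even if the critical point leaves the cycle under perturbation, the cycle itself persists as an ordinary attracting periodic orbit.
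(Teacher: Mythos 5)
Your proof is correct and reaches the same conclusion by a genuinely different, and arguably more elementary, route. The paper works with the linearizing coordinate (or B\"ottcher/Green's function in the superattracting case) $\varphi$ extended to the whole basin, sets $N_0 = \{\,|\varphi| < \alpha\,\}$ to be a sublevel set directly containing $K$, and shows that the perturbed sublevel set $N_0'$ of $\varphi_a$ is still forward-invariant and contains $K$; the argument concludes by observing that the nested intersection $\cap_k N_k'$ is an invariant, topologically attracting set disjoint from $J(f_a)$, hence a fixed (periodic) point. This requires the continuous dependence of the linearizing/B\"ottcher coordinate on the parameter, a global analytic input. You instead build a small trapping neighbourhood $W$ of the attracting cycles using only the attractivity of the multiplier, invoke the implicit function theorem to continue the cycles (noting, correctly, that superattracting cycles persist as attracting ones), get $W \subset F(f_a)$, and then pull the trap back to $K$ by compactness and joint continuity of $(a,z) \mapsto f_a^n(z)$. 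What you gain is that you never need to construct or perturb the conjugating functions; what the paper's approach gains is that the trap $N_0$ contains $K$ immediately, so there is no separate pullback step. Both are standard ``trap-and-persist'' arguments, and both correctly use the classification of Fatou components for Misiurewicz maps (Ma\~n\'e + no wandering domains: only attracting basins).

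One small remark: when you invoke Sullivan's theorem to conclude that $F(f_0)$ is a union of \emph{finitely many} basins, what you actually use is that the set $\{\gamma_1,\dots,\gamma_s\}$ of attracting cycles is finite (which follows, e.g., from each attracting cycle absorbing a critical point), and that every point of $F(f_0)$ is eventually absorbed into one of them. Your use of it is correct; just note that the pullback step only needs the $z \in K$ you actually consider, so finiteness of the cycle set, rather than of the components, is the relevant fact.
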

\begin{proof}
Recall that the only Fatou components for Misiurewicz maps are those
corresponding to attracting cycles. Assume first that $K$ belongs to a
given basin of attraction. That means that in the geometrically
attracting case (where the corresponding attracting fixed point is not
super-attracting) the conjugating function $\varphi$ can be extended
to the whole basin. In the super-attracting case there is a Greens
function $\varphi$ arising from the conjugating function. In both
cases there are level lines when $|\varphi(z)|$ is constant. Since $K$ is compact there is some $\al \in \R$ so that for any $z\in K$, $|\varphi(z)| < \al$. Let $N_0 = \{z: |\varphi(z)| < \al \}$. Then $N_0$ is open and contains $K$. We have $f(N_0)=N_1 \subset N_0$. Put $N_k=f^k(N_0)$. Therefore,
\[
N_0 \supset N_1 \supset \ldots,
\]
and $\cap_k N_k$ is the attracting fixed point.
Since $\varphi=\varphi_0$ is continuous with respect to the parameter,
for some $r > 0$ the set $N_0$ moves continuously in $a$, such that
for any $a \in \B(0,r)$, putting $N_0'=\{ z : |\varphi_a(z)| < \al
\}$, we have $N_0' \supset K$ and $f_a(N_0') \subset N_0'$. Again we get a nested sequence of
sets $N_0' \supset N_1' \ldots$. The intersection $I=\cap_k N_k'$ is
an invariant topologically attracting set. It cannot intersect the Julia set since the Julia
set is topologically repelling. Hence $I$ is an invariant subset of
the Fatou set $F(f_a)$ of $f_a$, compactly contained in
$F(f_a)$. It follows that $I$ must be a fixed point. From this the lemma follows.

%In the super-attracting case let $G(z)$ be the Greens function for the attracting cycle in which $D$ lies. Denote by $F$ the corresponding subset of the Fatou set. We have that $G(z) \raw 0$ as $z$ approaches the boundary of $F$. The assumption on $D$ means that $D$ belongs to the set $\{z : G(z) \leq -\al \}$ for some $\al > 0$ only depending on $d$. But the Greens function moves continuously with the parameters, so there exist some $r > 0$ so that $a \in \B(0,r)$ and $z \in D$ implies $G(z)\leq -\al/2$.

%In the attracting case, we first note that the conjugating function $\varphi(z)$ which conjugates $f$ with its linear part in a neighbourhood of the attracting cycle, can be extended analytically to the whole basin of attraction $A$. Since $\varphi$ depends analytically on the parameter, on any compact subset $A'$ of $A$ has an $r > 0$ for which any $z \in A'$ belongs to the same basin $A$ for all $a \in \B(0,r)$.

%WHAT ABOUT IF A SUP ATTRACTING PERTURBS INTO AN ATTRACTING (NON SUP)?
%From this the lemma follows.
\end{proof}

\section{Conclusion and proof of Theorem \ref{cluster}}
We prove Theorem \ref{cluster} by induction finitely many times. Let
us start with the given Misiurewicz map $f=f_0$ (not flexible Latt\'es
map) for which $J(f)=\hat{\C}$. In the end we
will find a hyperbolic map arbitrarily close to $f$.

Choose some (sufficiently small) $r > 0$ and some $k_0$-Whitney full
dimensional disk $\B_0 \subset W$ (where $k=k_0 \leq 1/2$
from Lemma \ref{initdist}). Let us now argue
inductively. Assume that we have found solutions to the following
equation for $1 \leq k \leq l$ (if $l=0$ no solution is yet found):
\begin{equation} \label{sol}
\xi_{n_k+m_k,k}(a)-c_{k}(a)=0.
\end{equation}
Assume that (\ref{sol}) holds for all $a \in \B_l$ and for all $1 \leq k
\leq l$, and that $\B_l$ is an almost planar $k_l$-Whitney disk of radius $r_l$.
%In each situation we now have the following cases:
Let us now process as follows.

Consider the critical point $c_{l+1}$ which has finite order
contact (in $\B_l$). Indeed, if no more critical points would have
finite order contact, then there would be a small
ball $\B$ around any point in $\B_l$ such that all $\B$ are
Misiurewicz maps. This is impossible unless $l=2d-2$, and then we
are done.

Hence assume $l < 2d-2$. In this case, by Lemma \ref{induII} there is
a solution to $\xi_{n+m,l+1}(a)-c_{l+1}(a)=0$ for some $a \in \B_l$
such that $ddist(a,\partial \B_l) \geq r_l/2$. By Lemma \ref{induI} there is a new almost planar $k_{l+1}$-Whitney disk $\B_{l+1}
\subset \B_l$, where $\xi_{n+m,l+1}(a)-c_{l+1}(a)=0$ for all $a \in
\B_{l+1}$.

Now, we continue in the same way with $l$ replaced by $l+1$.

%We continue in the same manner until all critical
%points are exhausted.
%Let us proceed as follows and argue inductively:
%In the $l$th step, we have strong distortion estimates
%inside $k_l$-Whitney disks $D_l \subset \B_l \subset \B_{l-1}$.
%We use Lemma \ref{initdist} to conclude that $\xi_{n_l,h_l}(D_l)$ grows to size $S_l$
%before it leaves $\NN$. Then after this it takes $m_l \leq N_l$
%iterates before $\xi_{n_l+m_l,h_l}(D_l)$ contains a component $U_l'$
%of $U_l$, where $U_l$ is a $\de_l$-neighbourhood around $Crit(f)$ such
%that the first return time from $U_l$ into itself if at least
%$2N_l$. Assume that $U_l'$ contains $c_{j_{h_l}}(0)$. Again, precisely as in Lemma \ref{induII}, if $r >
%0$ was chosen sufficiently small, we can
%ensure that also $c_{j_{h_l}}(D_l) \subset \xi_{n_l+m_l,h_l}(D_l)$. Hence
%there is a solution to $\xi_{n_l+m_l,h_l}(a)-c_{j_{h_l}}(a)=0$ inside
%$D_l$.
%To ensure
%that the function $F_{n_l+m_l}$ in Lemma \ref{nosing} does not have any
%singularities on its set of zeros, we have to make sure that
%$\xi_{n_l+m_l-1}(a) \cap U = \emptyset$. Hence we might also have to change $U$.
%But since this procedure only continues finitely many times,
%namely the number of critical points which is $2d-2$ including
%multiplicity, we can from the beginning choose $r > 0$ and $U$ sufficiently
%small such that the argument goes through in all steps.
Since the dimension drops $1$ in each step, the set of parameters satisfying
\[
\xi_{n_k+m_k,k}(a)-c_{k}(a)=0, \quad \text{  for all $1 \leq k \leq
  l$},
\]
is a manifold of codimension $l$. Hence $\B_l$ has dimension equal to
$2d-2-l$. (In the last step, when $l=2d-2$ the set $\B_{2d-2}$ might reduce to a single point).

Recall that the parameter space of rational maps of degree $d$ up to
conjugacy by a M\"obius transformation is equal to $2d-2$.
%After each step the dimension of the balls $\B_l$ reduces by one, by
%the Inductive Lemma I and
%Hence we can repeat the above Inductive Lemmas I and II finitely many times %until all critical points are exhausted.
% replacing $\B_1$ with $\B_l$ and
%replacing $\xi_{n+m,h_1}(a)=c_{j_{h_1}}(a)$ with
%$\xi_{n+m,h_l}(a)=c_{j_{h_l}}(a)$.
Hence we can repeat the argument above finitely many times until every
critical point lies in the orbit of a super-attracting cycle. Hence, we find a
function $f_a$ for some $a \in W \subset \B(0,r)$ which is
hyperbolic. In fact every critical point lies in a super-attracting
cycle. Since $r > 0$ was arbitrarily small, Theorem \ref{cluster}
follows.
%%%%%%%%%%%%%%%%%%%%%%%%%%%%%%%%%%%%%%%%%%%%%%%%%%%%%%%%%%%%%%%%%%%%%%%%
\comm{
The following lemma can be deduced from Lemma \ref{??} in \cite{MA} using the notion of bound period, but we give the proof anyway.
\begin{Lem}
There exists a neighbourhood $V$ around the set $\PP$ of all the $p_j$, and some $k < 1$, such that any disk $B \subset V$ satisfying the condition
\[
diam (B) \geq k \dist (B,\PP)
\]
has the following property. Let $a \in B(0,r)$.
There exists a number $n > 0$ such that $f_a^n: B \raw f_a^n(B)$ has strong bounded distortion and $diam (f_a^n(B)) \geq S$.
% and such that $f_a^k(B) \cap U = \emptyset$, for all $0 < k < n$.
\end{Lem}

\begin{proof}
We argue that $n$ can be determined by being the largest integer for with the condition
\[
|\xi_n(a)-R_a^n(z)| \leq e^{-\be n}
\]
holds for all $z \in B$. In fact, during this time we can use Lemma \ref{bound-distortion}, to get distortion of $(R_a^n)'$ on $B$. It means that the set $B$ grows almost affinely. To estimate the size of $R_a^n(B)$ we

\end{proof}
}
%%%%%%%%%%%%%%%%%%%%%%%%%%%%%%%%%%%%%%%%%%%%%%%%%%%%%%%%%%%%%%%%%%

\bibliographystyle{plain}
\bibliography{ref}

\end{document}